\def\C{\mathbb C}
\def\P{\mathbb P}
\newcommand{\Ctwo}{\C^2}
\def\B{\mathbb B}
\def\bcases{\begin{cases}}
\def\ecases{\end{cases}}
\newcommand{\D}{\mathbb D}
\newcommand{\N}{\mathbb N}
\newcommand{\R}{\mathbb R}
\newcommand{\Z}{\mathbb Z}
\newtheorem{thm}{Theorem}[section]
\newtheorem{main theorem}[thm]{Main Theorem}
\newtheorem{corollary}[thm]{Corollary}
\newtheorem{lemma}[thm]{Lemma}
\newtheorem{prop}[thm]{Proposition}
\newtheorem{problem 1}{Problem 1}
\newtheorem{problem 2}{Problem 2}
\newtheorem{problem 3}{Problem 3}
\theoremstyle{definition}
\newtheorem{defn}[thm]{Definition}
\newtheorem{remark}[thm]{Remark}
\newtheorem{example}[thm]{Example}
\newcommand{\bea}{\begin{eqnarray*}}
\newcommand{\eea}{\end{eqnarray*}}
\newcommand{\be}{\begin{equation}}
\newcommand{\ee}{\end{equation}}
\newcommand{\ra}{\rightarrow}
\renewcommand{\Re}{\mathrm{Re}\,}
\begin{document}

\title
{Dynamics of transcendental H\'enon maps}

\author[L. Arosio]{Leandro Arosio$^{\dag}$}
\author[A.M. Benini]{Anna Miriam Benini$^{\dag}$}
\author[J.E.  Forn{\ae}ss ]{John Erik Forn{\ae}ss$^{*}$}
\author[H. Peters]{Han Peters}

\subjclass[2010]{32H50, 37F50, 37F10}
\date{\today}
\keywords{Fatou sets, Transcendental functions, H\'enon maps}

\thanks{$^{\dag}$  Supported by the SIR grant ``NEWHOLITE - New methods in holomorphic iteration'' no. RBSI14CFME}
\thanks{$^{*}$  Supported by the NFR grant no. 10445200}
\thanks{Part of this work was done during the international research program "Several Complex Variables and Complex Dynamics"
at the Center for Advanced Study at the Academy of Science and Letters in Oslo during the academic year 2016/2017. }
\address{ L. Arosio: Dipartimento Di Matematica\\
Universit\`{a} di Roma \textquotedblleft Tor Vergata\textquotedblright\  \\
 Italy} \email{arosio@mat.uniroma2.it}
\address{ A.M. Benini: Dipartimento Di Matematica\\
Universit\`{a} di Roma \textquotedblleft Tor Vergata\textquotedblright\  \\
 Italy} \email{ambenini@gmail.com}
\address{ H. Peters: Korteweg de Vries Institute for Mathematics\\
University of Amsterdam\\
the Netherlands} \email{hanpeters77@gmail.com}
\address{ J.E. Fornaess: Department of Mathematical Sciences\\
NTNU Trondheim, Norway} \email{john.fornass@ntnu.no}

\begin{abstract}
The dynamics of transcendental functions in the complex plane has received a significant amount of attention. In particular much is known about the description of Fatou components. Besides the types of periodic Fatou components that can occur for polynomials, there also exist so-called Baker domains, periodic components where all orbits converge to infinity, as well as wandering domains.

In trying to find analogues of these one dimensional results, it is not clear which higher dimensional transcendental maps to consider. In this paper we find inspiration from the extensive work on the dynamics of complex H\'enon maps. We introduce the family of transcendental H\'enon maps, and study their dynamics, emphasizing the description of Fatou components. We prove that the classification of the recurrent invariant Fatou components is similar to that of polynomial H\'enon maps, and we give examples of Baker domains and wandering domains.
\end{abstract}
\maketitle
\tableofcontents

\section{Introduction}
Our goal is to combine ideas from two separate areas of holomorphic dynamics: the study of transcendental dynamics on the complex plane, and the study of polynomial H\'enon maps in $\mathbb C^2$. Recall that a polynomial H\'enon map is a map of the form
$$
F: (z,w) \mapsto (f(z) - \delta w, z),
$$
where $f$ is a polynomial of degree at least $2$, and $\delta$ is a non-zero constant. Here we consider maps of the same form, but where $f$ is a transcendental entire function. We call such $F$ a   \emph{ transcendental H\'enon map}, and it is easy to see that $F$ is a holomorphic automorphism of $\C^2$ with constant Jacobian determinant $\delta.$

The main reason for considering transcendental H\'enon maps and not  arbitrary entire maps in $\mathbb C^2$ is that the space of entire maps is too large. Even the class of polynomials maps in two complex variables is often considered too diverse to study the dynamics of these maps all at the same time.
On the other side, the family of polynomial  \emph{ automorphisms} of $\mathbb C^2$ has received a large amount of attention. It portrays a wide variety of dynamical behavior, yet it turns out that this class of maps is homogeneous enough to describe its dynamical behavior in detail. A result of Friedland and Milnor \cite{FM} implies that any polynomial automorphism with non-trivial dynamical behavior is conjugate to a finite composition of polynomial H\'enon maps. It turns out that finite compositions of polynomial H\'enon maps behave in many regards similarly to single H\'enon maps, and the family of H\'enon maps is sufficiently rigid to allow a thorough study of its dynamical behavior.

%

Very little is known about the dynamics of holomorphic automorphisms of $\mathbb C^2$, although there have been results showing holomorphic automorphisms of $\mathbb C^2$ with interesting dynamical behavior, such as the construction of oscillating wandering domains by Sibony and the third named author \cite{FS}, and a result of Vivas, Wold and the last author \cite{PVW} showing that a generic volume preserving automorphisms of $\mathbb C^2$ has a hyperbolic fixed point with a  stable manifold which is dense in $\C^2$.
Transcendental H\'enon maps seems to be a natural class of  holomorphic automorphisms of $\mathbb C^2$ with non-trivial dynamics,  restrictive enough to allow for a clear description of its dynamics, but large enough to display interesting dynamical behaviour which does not appear in the polynomial H\'enon case.

We  classify  in Section \ref{sectionrecurrent} the invariant  \emph{recurrent} components of the Fatou set of a transcendental H\'enon map, that is, components which admits an orbit accumulating to an interior point.  Invariant recurrent components have been described for polynomial H\'enon maps in \cite{BS1991};  our classification holds not only for transcendental H\'enon maps but also for the larger class of holomorphic automorphisms with constant Jacobian. Moreover, using the fact that $f$ is a transcendental holomorphic function, we obtain in Section \ref{sectioninvariant} results about periodic points and invariant algebraic curves. We show that the set ${\rm Fix}(F^2)$  is discrete, and (if $\delta\neq -1$) that $F$ admits  infinitely many   saddle points of period 1 or 2, which implies that the Julia set is not empty. We also show that there is no irreducible  invariant algebraic curve (the same was proved by Bedford-Smillie for polynomial H\'enon maps in \cite{BS91a}).
The dynamical behavior can be restricted even further by considering transcendental H\'enon maps whose map $f$ has a given order of growth. For example, if the order of growth is smaller than $\frac{1}{2}$, then  ${\rm Fix}(F^k)$ is discrete for all $k\geq 1$.

We then give examples of \emph{Baker domains}, \emph{escaping wandering domains},  and \emph{oscillating wandering domains}. Such Fatou components appear in transcendental dynamics in $\C$, and for trivial reasons they cannot occur for polynomials. The existence of the filtration gives a similar obstruction for polynomial H\'enon maps, but this filtration is lost when considering transcendental H\'enon maps.

For a transcendental function a Baker domain is a periodic  Fatou component on which the orbits converge locally uniformly  to  the point $\infty$, which is an essential singularity \cite{BerSur}.
We give an example in Section \ref{sectionbaker}  of a transcendental H\'enon map with a two-dimensional analogue:  a Fatou component on which the orbits converge to a  point at the line at infinity $\ell^\infty$, which is (in an appropriate sense) an essential singularity. In one complex variable for any   Baker domain there exists an absorbing domain, equivalent to a half plane $\mathbb H$, on which the dynamics is conjugate to an affine function, and the conjugacy extends as a semi-conjugacy to the entire Baker domain. In our example the domain is equivalent to $\mathbb H \times \mathbb C$, and the dynamics is conjugate to an affine map.

The final part of the paper is devoted to  wandering domains. Recall that wandering domains are known not to exist for one-dimensional polynomials and rational maps \cite{Sullivan}, but they do arise for transcendental maps (see for example \cite{BerSur}). In higher dimensions it is known that wandering domains can occur for holomorphic automorphisms of $\C^2$ \cite{FS} and for  polynomial maps \cite{ABDPR2016}, but whether polynomial H\'enon maps can have wandering domains remains an open question. We will consider two types of wandering Fatou components, each with known analogues in the one-dimensional setting.
 We construct in Section \ref{sectionescaping}  a  wandering domain,  biholomorphic to $\mathbb C^2$, which is escaping: all orbits converge to the point $[1:1:0]$ at infinity. The construction is again very similar to that in one dimension. However, the proof that the domain and its forward images  are actually different Fatou components is not the proof usually given in one dimension. Instead of finding explicit sets separating one component from another, we give an argument that uses exponential expansion near the boundary of each of the  domains.

Finally, we construct   in Section \ref{sectionoscillating} a transcendental H\'enon map $F$ with a  wandering domain $\Omega$, biholomorphic to $\C^2$, which is oscillating, that is it contains points whose orbits  have both  bounded subsequences and  subsequences which converge to infinity.
Up to a linear change of variable, the map $F$ is  the limit as $k\to\infty$ of automorphisms of $\C^2$ of the form $F_k(z,w):=(f_k(z)+\frac{1}{2}w,\frac{1}{2}z)$, all having a hyperbolic fixed point at the origin.
The family $(F_k)$ is constructed inductively using Runge approximation in one variable to obtain an entire function $f_{k+1}$ which  is   sufficiently close to $f_k$ on larger and larger disks, in such a way  that the orbit  of an open set $U_0\subset \C^2$ approaches the origin coming in along the stable manifold of $F_k$  and then goes outwards along the unstable manifold of $F_k$, over and over for all $k\in \N$.

Regarding the complex structure of those Fatou components, in both the Baker domain and the oscillating wandering domain case one encounters the same difficulty. Namely, in both cases one finds a suitable invariant domain $A$ of the Fatou component on which it is possible to construct, using the dynamics of $F$, a biholomorphism to a model space ($\mathbb{H}\times \C$ and $\C^2$ respectively, where $\mathbb{H}$ denotes the right half-plane). One then needs to prove that the domain $A$ is in fact  the whole Fatou component, and this is done by using the following \emph{plurisubharmonic method}: If $A$ is strictly smaller than $\Omega$ then
we can construct a plurisubharmonic function $u\colon \Omega\to \R\cup\{-\infty\}$ for which the submean value property is violated at points in $\partial A\cap \Omega$. We note that a somewhat similar argument was given by the third author in \cite{shortC^k}, and we believe that this method can be applied in a variety of similar circumstances.

It is important to point out that for an entire  map $F\colon \C^2\to\C^2$  there are two natural definitions of the Fatou set, which  correspond to
compactifying $\C^2$ either with the one-point compactification $\widehat {\C^2}$, or with $\P^2$.
In one dimension the two Fatou sets coincide, and the same is true for polynomial H\'enon maps,  since by the existence of the filtration all forward orbits that converge to infinity converge to the same point on the line at infinity $\ell^\infty = \mathbb P^2 \setminus \mathbb C^2$.
For a general entire self-map  of $\C^2$ these two definitions can give  two different Fatou sets (see Example \ref{exampledifferent}).
Notice that,  if we compactify with  $\widehat {\C^2}$, any open subset of $\C^2$ on which the sequence of   iterates $F^n$ diverges uniformly on compact subsets would be  in the Fatou set regardless of \emph{how} the orbits go to infinity. This seems to be too weak a definition in two complex variables. We thus  define the Fatou set compactifying $\C^2$ with $\P^2$ (which  has the additional advantage of being a complex manifold).
Section \ref{sectionfatou} is devoted to this argument.







\section{The definition of the Fatou set}\label{sectionfatou}

Let $n\in \N$ and let $X$ be a complex manifold.
There are (at least) two natural definitions of what it means for a family $\mathcal{F}\subset {\rm Hol}(X,\C^n)$ to be normal.
We denote by $\widehat {\C^n}$ the one-point compactification of $\C^n$, and with the symbol $\infty$ we denote both the point at infinity and the constant map $z\mapsto\infty$.
\begin{defn}
A family $\mathcal{F}\subset {\rm Hol}(X,\C^n)$ is  \emph{ $\P^n$-normal}  if for every sequence $(f_n)\in\mathcal{F}$ there exists a subsequence $(f_{n_k})$ converging uniformly on compact subsets to $f\in {\rm Hol}(X,\P^n)$.
In other words, $\mathcal{F}$ is pre-compact in ${\rm Hol}(X,\P^n)$.

A family $\mathcal{F}\subset {\rm Hol}(X,Y)$ is  \emph{ $\widehat{\C^n}$-normal} if for every sequence $(f_n)\in\mathcal{F}$ which is not divergent on compact subsets there exists a subsequence
$(f_{n_k})$ converging uniformly on compact subsets to $f\in {\rm Hol}(X,\C^n)$.
This is equivalent to $\mathcal{F}$ being pre-compact in ${\rm Hol}(X,\C^n)\cup \infty\subset C^0(X,\widehat{\C^n})$.
\end{defn}

\begin{remark}
When $n=1$ the two definitions are equivalent.
\end{remark}

A family $\mathcal{F}\subset {\rm Hol}(X,\C^n)$ is $\P^n$-normal  if and only if it is equicontinuous with respect to the Fubini-Study distance on $\P^n$. This follows from the Ascoli-Arzel\`a theorem and from the fact that  ${\rm Hol}(X,\P^n)$ is closed in $C^0(X,\P^n)$.
One may think  that, similarly, a  family $\mathcal{F}\subset {\rm Hol}(X,\C^n)$ is $\widehat{\C^n}$-normal if and only if it is equicontinuous with respect to the spherical distance $d_{\widehat {\C^n}}$ on $\widehat {\C^n}$, but  this is not the case, as the following example shows.

\begin{figure}[t]
\centering
\includegraphics[width=2.5in]{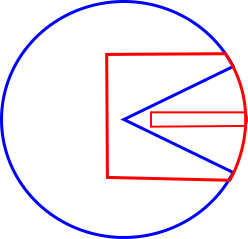}
\caption{The mouth of Pac-man $P_{n+1}$ is contained in bait $R_n$.}
\label{figure:pacman}
\end{figure}

\begin{example}
For $n\geq 2$, the family   ${\rm Hol}(\D,\C^n)\cup \infty$ is not closed in $C^0(\D,\widehat{\C^n})$.
As a consequence, for a family  $\mathcal{F}\subset {\rm Hol}(\D,\C^n)$, being pre-compact in $C^0(\D,\widehat{\C^n})$ is not equivalent to being $\widehat{\C^n}$-normal.
\end{example}
\begin{proof}
Let $n=2$.
Let $ s_n\geq 0$ be an increasing sequence of real numbers converging to $\frac{1}{2}$. Let $ \alpha_n\leq \frac{\pi}{2}$ be a decreasing sequence converging to 0.
We define the  \emph{ Pac-man} $$P_n:=\overline \D\setminus \{s_n+\rho e^{i\theta}\colon  \rho> 0, | \theta|< \alpha_n\}.$$
Let $r_n\geq 0$ be an increasing sequence converging to $\frac{1}{2}$.  Let $\beta_n$  be a sequence decreasing to 0.
We define the  \emph{ bait} $$R_n:=\overline \D\cap \{z\in \C\colon |{\rm Im}\, z|\leq  \beta_n, {\rm Re}\, z\geq r_n\}.$$

Clearly $$\bigcap_{n\in \N}R_n=\{z\in \C\colon |{\rm Im}\, z|=0, \frac{1}{2}\leq {\rm Re}\, z\leq 1\},$$ which we call the  \emph{ slit} $S$.

We  can choose the sequences $(s_n), (\alpha_n), (r_n), (\beta_n)$ in such a way that
\begin{align*}
&P_n\cap R_n=\varnothing\\
&\D\setminus P_{n+1} \subset R_n.
\end{align*}
Notice that this implies that $s_n\leq r_n\leq s_{n+1}$ for all $n\in \N$. See Figure \ref{figure:pacman} for an illustration of a single Pac-man $P_{n+1}$ and two baits $R_{n+1}$ and $R_n$.

Let $b_i>0$ be a sequence such that for all $n\in \N$, $x\in \C^2$ the following implication holds
\begin{equation}\label{neighbinfty}
\|x\|+\frac{1}{2^n}\geq \sum_{i=0}^{n-1} b_i \Longrightarrow d_{\widehat \C^2}(x,\infty)\leq \frac{1}{2^{n+1}}.
\end{equation}
By using Runge approximation we can  define a sequence of holomorphic  functions $f_n: D(0,1+\epsilon )\ra\C$ such that $|f_n|\leq\frac{1}{2^{n+1}}$ on $P_{2n}$, and ${\sf Re}\,f_n\geq b_n$ on $R_{2n}$,
and a sequence of holomorphic functions $g_n:D(0,1+\epsilon )\ra\C$ such that $|g_n|\leq\frac{1}{2^{n+1}}$ on $P_{2n+1}$, and ${\sf Re}\,g_n\geq b_n$ on $R_{2n+1}$.
For all $n\geq 0$, let $H_n:D(0,1+\epsilon )\ra\C^2$ be defined as $H_n(z)=\sum_{j=0}^n (f_n,g_n)$.
Denote $$a_n:=\max_{z\in \overline \D} d_{\widehat \C^2}(H_{n}(z), H_{n-1}(z)).$$ We claim that $a_n\leq \frac{1}{2^n}$, and thus the sequence $(H_n)$ converges uniformly on the disk $\overline \D$ to a continuous mapping $H\colon \overline\D\to \widehat \C^2$ such that
$H(\overline \D\setminus S)\subset \C^2$ and $H(S)=\{\infty\}$.
Fix $n\in \N$. If $z\in P_{2n}$, then $|f_n(z)|\leq \frac{1}{2^{n+1}}$ and $|g_n(z)|\leq \frac{1}{2^{n+1}}$. Thus $\|(f_n(z),g_n(z))\|\leq \frac{1}{2^n}$, which implies that
$d_{\widehat \C^2}(H_{n}(z), H_{n-1}(z))\leq \frac{1}{2^n}.$
If $z\in \D\setminus (P_{2n}\cup R_{2n})$, then we have that ${\sf Re}\sum_{i=0}^{n-1} g_i (z)\geq \sum_{i=0}^{n-1} b_i$ and
$|g_n(z)|\leq \frac{1}{2^n}$, and thus by (\ref{neighbinfty})
both $H_{n}(z)$ and $H_{n-1}(z)$ belong to the ball of radius $\frac{1}{2^{n+1}}$ centered at $\infty$.
If $z\in R_{2n}$, then we have that ${\sf Re}\sum_{i=0}^{n-1} f_i (z)\geq \sum_{i=0}^{n-1} b_i$  and ${\sf Re}\,f_n(z)\geq b_n$,   and thus by (\ref{neighbinfty}) both $H_{n}(z)$ and $H_{n-1}(z)$ belong to the ball of radius $\frac{1}{2^{n+1}}$ centered at $\infty$.
\end{proof}

\begin{lemma}\label{projectiveimpliesintrinsic}
If a   family $\mathcal{F}\subset {\rm Hol}(X,\C^n)$ is $\P^n$-normal, then it is $\widehat{\C^n}$-normal.
\end{lemma}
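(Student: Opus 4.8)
The plan is to take an arbitrary sequence $(f_k)\subset\mathcal{F}$ that is \emph{not} divergent on compact subsets and to extract from it a subsequence converging, uniformly on compact subsets, to a map in ${\rm Hol}(X,\C^n)$. Unwinding the meaning of compact divergence, its failure produces a fixed compact set $K\subset X$, a fixed compact set $L\subset\C^n$, and an infinite set of indices along which $f_k(K)\cap L\neq\varnothing$; passing to this subsequence we obtain points $x_j\in K$ with $f_{k_j}(x_j)\in L$. Applying $\P^n$-normality to $(f_{k_j})$ we may pass to a further subsequence (not relabeled) converging uniformly on compact subsets, in the Fubini--Study distance, to some $f\in{\rm Hol}(X,\P^n)$. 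After one more extraction we may assume $x_j\to x_0\in K$ and $f_{k_j}(x_j)\to\ell\in L$; uniform convergence on the compact set $K$ together with continuity of $f$ then gives $f(x_0)=\ell\in\C^n$. Thus $f$ takes at least one value in $\C^n\subset\P^n$.

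The core of the argument is to promote this to $f(X)\subset\C^n$. Here I would assume $X$ connected --- this is the relevant case, $X$ being $\C^2$ or a Fatou component, and the lemma genuinely fails for disconnected $X$, as a family that converges locally uniformly on one component and diverges to infinity on another is $\P^n$-normal but not $\widehat{\C^n}$-normal. Arguing by contradiction, suppose $f(y_0)\in\P^n\setminus\C^n$ for some $y_0\in X$. Since $\P^n\setminus\C^n=\{z_0=0\}$ is disjoint from the chart $\{z_0\neq 0\}$, there is a chart $U_{i_0}=\{z_{i_0}\neq 0\}\cong\C^n$ with $i_0\neq 0$ containing $f(y_0)$, and on a small connected neighborhood $V$ of $y_0$ with $f(\cl V)\subset U_{i_0}$ we may work in this chart; let $\psi\colon V\to\C$ be the composition of $f$ with the coordinate $z_0/z_{i_0}$ on $U_{i_0}$, a holomorphic function with $\psi(y_0)=0$. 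By uniform convergence on $\cl V$, for $j$ large $f_{k_j}(\cl V)\subset U_{i_0}$; since moreover $f_{k_j}(V)\subset\C^n=\{z_0\neq 0\}$, the corresponding functions $\psi_j\colon V\to\C$ (composition of $f_{k_j}$ with $z_0/z_{i_0}$) are \emph{nowhere vanishing}, and $\psi_j\to\psi$ uniformly on compact subsets of $V$. By Hurwitz's theorem --- reduced to one variable by restricting to a generic complex line through $y_0$ --- a locally uniform limit of nowhere-vanishing holomorphic functions on the connected set $V$ is either nowhere vanishing or identically zero, so $\psi\equiv 0$, i.e.\ $f(V)\subset\P^n\setminus\C^n$. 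But $f^{-1}(\P^n\setminus\C^n)$ is a closed analytic subset of $X$ (locally the zero set of one holomorphic function), and an analytic subset of a connected complex manifold with nonempty interior must be the whole manifold; hence $f(X)\subset\P^n\setminus\C^n$, contradicting $f(x_0)\in\C^n$.

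Finally I would note that Fubini--Study convergence automatically refines to the required convergence into $\C^n$. Given a compact $K\subset X$, the set $f(K)$ is compact in $\C^n$, hence contained in a Euclidean ball on a compact neighborhood of which the Fubini--Study distance and the Euclidean (equivalently spherical) distance are bi-Lipschitz comparable; for $j$ large $f_{k_j}(K)$ lies in a slightly larger such ball, so $\sup_K d_{\mathrm{FS}}(f_{k_j},f)\to 0$ forces $\sup_K\|f_{k_j}-f\|\to 0$. Since $\C^n\hookrightarrow\P^n$ is a biholomorphism onto an open subset, $f\in{\rm Hol}(X,\C^n)$, and $(f_{k_j})$ is the desired subsequence, proving $\widehat{\C^n}$-normality. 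The one genuine obstacle is the middle paragraph --- the Hurwitz-plus-analytic-continuation step, which is also where connectedness of $X$ is essential; everything else is routine compactness bookkeeping.
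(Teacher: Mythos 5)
Your proof is correct and uses the same key idea as the paper: establish the dichotomy ``$f(X)\subset\C^n$ or $f(X)\subset\ell^\infty$'' for the limit $f$ of a $\P^n$-convergent subsequence, using Hurwitz's theorem applied to an affine coordinate vanishing on $\ell^\infty$ near an alleged point of indeterminacy, then conclude. You are more careful about the order of extraction, and it is a point that matters: you first pass to the indices along which the sequence hits a fixed compact set, and only then invoke $\P^n$-normality, which guarantees that the limit takes at least one finite value and therefore lands on the right horn of the dichotomy. The paper extracts a $\P^n$-convergent subsequence first and only afterwards invokes non-divergence of the \emph{original} sequence; taken literally this leaves a small gap, since the particular subsequence produced by $\P^n$-normality could diverge on compacts even when the full sequence does not. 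You also make explicit the connectedness hypothesis on $X$ (with a correct counterexample for disconnected $X$) and spell out the Fubini--Study-to-Euclidean comparison at the end, both of which the paper leaves implicit. This is not a different method, just a tighter write-up of the same argument.
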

\begin{proof}
Let $(f_n)$ be a sequence in $\mathcal{F}$. Since $\mathcal{F}$ is $\P^n$-normal there exists a subsequence $(f_{n_k})$ converging uniformly on compact subsets to a map $f\in {\rm Hol}(X,\P^n).$
If there is a  point $x\in X$ such that $f(x)\in \ell^\infty$, then $f(X)\subset \ell^\infty$.
Indeed, it suffices to show that $f^{-1}(\ell^\infty)$ is open, and this follows taking an affine chart around  $f(y)\in \ell^\infty$ in such a way that  $\ell^\infty=\{z_1=0\}$ and applying Hurwitz theorem to the sequence $\pi_1\circ f_n$.

Thus, if the sequence $(f_n)$ is not diverging on compact subsets, the subsequence $(f_{n_k})$ converges uniformly on compact subsets to a map $f\in {\rm Hol}(X,\C^n).$

\end{proof}

As a consequence of the previous discussion, for an entire map $F\colon \C^n\to \C^n$ we have two possible definitions of the Fatou set.
\begin{defn}
A point $z\in\C^n$ belongs to the  \emph{ $\widehat{\C^n}$-Fatou set}  if the family of iterates $(F^n)$ is  $\widehat{\C^n}$-normal near $z$.
A point $z\in\C^n$ belongs to the  \emph{ $\P^n$-Fatou set}  if the family of iterates $(F^n)$ is $\P^n$-normal near $z$.
\end{defn}

By Lemma \ref{projectiveimpliesintrinsic} the $\P^n$-Fatou set is contained in the $\widehat{\C^n}$-Fatou set, but if $n>1$  the inclusion  may be strict  as the following example shows.

\begin{example}\label{exampledifferent}
Given an increasing sequence $N_j \in \mathbb N$, and consider the sequence of polynomials
$$
f_j(z) = (z-5(j-1))^{N_j},
$$
defined respectively on the disks $D_j = D(5(j-1),2)$, where $j\geq 1$. Given a sequence $\epsilon_j \searrow 0$, by Runge approximation (see e.g. Lemma \ref{rungeappr}) we can find an entire function $f$ that is $\epsilon_j$-close to $f_j$ on $\overline D_j$ for all $j$.

Define the map $F \in \mathrm{Aut}(\C^2)$ by
$$
F(z,w) = (z+5, w + f(z)).
$$
It follows immediately from the first coordinate that the forward orbit of any point $(z_0,w_0)$ converges to infinity, ie. $\|F^n(z_0, w_0)\| \rightarrow \infty$, hence the $\widehat{\C^2}$-Fatou set equals all of $\mathbb C^2$. Moreover, if $|z_0| < 1$ then $F^n(z_0, w_0) \rightarrow [1:0:0]$, uniformly on compact subsets. Thus, the domain $\D \times \mathbb C$  is contained in a $\P^2$-Fatou component.

On the other hand, if the sequence $N_j$ increases sufficiently fast, then for $1 < |z_0| \le 2$ we have that $F^n(z_0, w_0) \rightarrow [0:1:0] \in \ell^\infty$, again uniformly on compact subsets. It follows that $\D \times \mathbb C$ is a $\P^2$-Fatou component. Therefore in this example the single $\widehat{\C^2}$-Fatou component contains infinitely many distinct $\P^2$-Fatou components.
\end{example}

\begin{remark}
For a polynomial H\'enon map, it follows from the existence of the invariant filtration that any forward orbit that converges to infinity must converge to  the point $[1:0:0] \in \ell^\infty$. Thus, the two definitions of Fatou set coincide.
\end{remark}


In what follows, we will only consider $\P^2$-normality.
We will call the $\P^2$-Fatou set simply the  \emph{ Fatou set}. The  \emph{ Julia set} is the complement of the Fatou set.

\section{Invariant subsets}\label{sectioninvariant}

\subsection{Periodic points}
%

If $\ell$ is  a transcendental function or a  polynomial H\'enon map, then, for each $k \geq 1$, the set ${\rm Fix}(\ell^k)$ is discrete. Clearly this statement is not satisfied for holomorphic automorphisms of $\mathbb C^2$. For example, one can consider any holomorphic conjugate of a rational rotation.

Consider a periodic orbit
$$
(z_0, w_0) \mapsto (z_1, w_1) \mapsto \cdots \mapsto (z_k, w_k) = (z_0, w_0)
$$
Since $w_{j+1} = z_j$ for each $j$, the first coordinate function of the H\'enon map gives the following relations
\begin{equation}\label{system}
\begin{cases}
f(z_0)  = z_1 + \delta z_{k-1}\\
f(z_1)  = z_2 + \delta z_0\\
 \quad \vdots\\
f(z_{k-1})  = z_0 + \delta z_{k-2}.
\end{cases}
\end{equation}

\begin{lemma}
If $F$ is a transcendental H\'enon map, then  ${\rm Fix}(F)$ and  ${\rm Fix}(F^2)$ are discrete.
\end{lemma}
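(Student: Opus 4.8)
The plan is to use the relations in \eqref{system} to reduce the problem to a statement about zeros of a single transcendental entire function in one variable, and then invoke the fact that the zero set of a non-constant holomorphic function of one variable is discrete. For $\mathrm{Fix}(F)$ ($k=1$) the system \eqref{system} degenerates: a fixed point $(z_0,w_0)$ must have $w_0=z_0$ and $f(z_0)=z_0+\delta z_0=(1+\delta)z_0$, so $\mathrm{Fix}(F)$ is in bijection with the zero set of the entire function $z\mapsto f(z)-(1+\delta)z$. This function is transcendental, hence non-constant, so its zero set is discrete, and the correspondence $z_0\mapsto(z_0,z_0)$ is a homeomorphism onto $\mathrm{Fix}(F)$.

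For $\mathrm{Fix}(F^2)$ we use \eqref{system} with $k=2$: a point of $\mathrm{Fix}(F^2)$ corresponds to a pair $(z_0,z_1)$ with
\[
f(z_0)=z_1+\delta z_1=(1+\delta)z_1,\qquad f(z_1)=z_0+\delta z_0=(1+\delta)z_0.
\]
Wait — one must be careful: in the $k=2$ case the indices in \eqref{system} wrap around as $z_{k-1}=z_1$ and $z_{k-2}=z_0$, so the system reads $f(z_0)=z_1+\delta z_1$ and $f(z_1)=z_0+\delta z_0$; I would double-check this reduction directly from $w_{j+1}=z_j$ rather than trusting the displayed indices. Assuming $\delta\neq -1$, the second equation gives $z_0=\frac{1}{1+\delta}f(z_1)$, and substituting into the first yields that $z_1$ must be a zero of the entire function
\[
g(z):=f\!\left(\tfrac{1}{1+\delta}f(z)\right)-(1+\delta)z.
\]
Since $f$ is transcendental entire, $g$ is entire; I would argue $g$ is non-constant (indeed transcendental) — the composition of transcendental entire functions is transcendental, and subtracting a linear term cannot make it constant — so $g^{-1}(0)$ is discrete, and $\mathrm{Fix}(F^2)$, being the image of this set under the continuous injection $z_1\mapsto(z_0,w_0)=\big(\tfrac{1}{1+\delta}f(z_1),\tfrac{1}{1+\delta}f(z_1)\big)$... — here I need to recompute $w_0$ from $w_0=z_{k-1}=z_1$, so in fact $(z_0,w_0)=\big(\tfrac{1}{1+\delta}f(z_1),z_1\big)$ — is discrete.

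The case $\delta=-1$ must be handled separately, since then the substitution breaks down. When $\delta=-1$ the equations become $f(z_0)=0$ and $f(z_1)=0$, i.e. $z_0,z_1$ are both zeros of $f$; since $f$ is transcendental, $Z:=f^{-1}(0)$ is discrete, and $\mathrm{Fix}(F^2)$ injects into $Z\times Z$ (via $(z_0,w_0)\mapsto(z_0,z_1)$ with $w_0=z_1$), hence is discrete. I expect the main obstacle to be purely bookkeeping: getting the index wrap-around in \eqref{system} right for small $k$, and cleanly verifying that the relevant one-variable function is genuinely non-constant in every case (in particular confirming that transcendentality of $f$ survives the composition $f\circ(\text{affine})$ and the subtraction of a linear term), rather than any deep analytic difficulty.
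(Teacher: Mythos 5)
Your proposal is correct and follows essentially the same route as the paper: reduce to the one-variable system $f(z_0)=(1+\delta)z_1$, $f(z_1)=(1+\delta)z_0$, treat $\delta=-1$ via the discreteness of $f^{-1}(0)$, and for $\delta\neq -1$ substitute to obtain a single transcendental equation in one variable whose zero set is discrete. The only cosmetic difference is that you solve for $z_0$ in terms of $z_1$ while the paper solves for $z_1$ in terms of $z_0$.
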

\begin{proof}
The fixed points $(z,w)$ of $F$ satisfy $z = w$ and thus $z = f(z)-\delta z$. Since $f$ is not linear the set of solutions is discrete.

When $k=2$ the system \eqref{system} gives

\begin{equation}\label{Periodtwo}
\begin{cases}
f(z_0)  = (1+\delta) z_1 \\
f(z_1)  = (1+\delta) z_0.
\end{cases}
\end{equation}

When $\delta = -1$ it is immediate that the set of solutions is discrete. When $\delta \neq -1$ the solutions satisfy

\begin{equation}\label{Periodtwo2}
\begin{cases}
 \frac{f(\frac{f(z_0)}{1+\delta})}{1+\delta}=z_0\\
z_1=\frac{f(z_0)}{1+\delta}.
\end{cases}
\end{equation}

and  again one observes a discrete set of solutions.
\end{proof}

Without making further assumptions it is not clear to the authors that ${\rm Fix}(F^k)$ is discrete when $k\geq 3$. However, we can show discreteness when we assume that the function $f$ has small order of growth.


\begin{prop}\label{prop:discrete}
Let $F$ be a transcendental H\'enon maps such that  $f$ has order of growth strictly less than $\frac{1}{2}$.
Then  ${\rm Fix}(F^k)$ is discrete for all $k\geq 1$.
\end{prop}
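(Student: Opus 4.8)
The plan is to suppose that $\mathrm{Fix}(F^k)$ has an accumulation point and derive a contradiction using the hypothesis on the order of growth. If $(z_0,\dots,z_{k-1})$ ranges over a set of solutions of the system \eqref{system} with an accumulation point, then in particular the first coordinates $z_0$ accumulate. The idea is to eliminate $z_1,\dots,z_{k-1}$ from \eqref{system} and obtain a single entire equation $G(z_0)=0$, where $G$ is built from iterated compositions and translates of $f$; then show that $G$ is a non-constant entire function, so its zero set is discrete, contradiction. The point of the order-of-growth hypothesis is precisely to guarantee that $G$ is not identically zero.

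First I would set up the elimination. From \eqref{system}, $z_{j+1}=f(z_j)-\delta z_{j-1}$ (indices mod $k$), so once $z_0$ and $z_{k-1}$ are fixed the whole orbit is determined; equivalently, define recursively a sequence of entire functions of two variables and reduce to one variable, or — more in the spirit of \eqref{Periodtwo2} — when $\delta\ne -1$ one can try to solve for $z_{k-1}$ and get a single composition equation. The cleanest route: regard the solutions as the zero set in $\C^k$ of the $k$ entire functions $P_j(z_0,\dots,z_{k-1}) := f(z_j)-z_{j+1}-\delta z_{j-1}$. An accumulation point of $\mathrm{Fix}(F^k)$ gives an accumulation point of this analytic set, which (being the common zero set of finitely many holomorphic functions) forces the set to be positive-dimensional, hence to contain a non-constant holomorphic disk $\lambda\mapsto(z_0(\lambda),\dots,z_{k-1}(\lambda))$. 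Along such a disk all the relations $f(z_j(\lambda))=z_{j+1}(\lambda)+\delta z_{j-1}(\lambda)$ hold identically.

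The key analytic input is a growth/Nevanlinna-type argument. Along the holomorphic disk, each $z_j(\lambda)$ is holomorphic; I would argue that at least one of the coordinate functions, say $z_0(\lambda)$, is non-constant (otherwise all are constant and we just get a single point, not an accumulation). Then $f$ satisfies a functional equation $f(z_j(\lambda))=z_{j+1}(\lambda)+\delta z_{j-1}(\lambda)$; composing the relations around the cycle expresses $z_0$ as essentially a $k$-fold iterate of $f$ applied to $z_0$, plus lower-order linear terms. A $k$-fold composition $f\circ\cdots\circ f$ of an entire function of order $\rho$ has order behaving like (roughly) the $k$-th iterate of $t\mapsto t^{\rho}$-growth; when $\rho<\frac12$ one exploits the $\cos\pi\rho$ theorem / Wiman–Valiron type estimates: an entire function of order less than $\frac12$ has a sequence of circles $|z|=r_n$ on which $|f(z)|\to\infty$ uniformly, i.e. $\min_{|z|=r}|f(z)|\to\infty$. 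Using this, on circles $|\lambda|=\rho_n$ chosen so that $|z_0(\lambda)|$ is large, the left side $f(z_0(\lambda))$ (or the appropriate iterate) is forced to be much larger in modulus than the right-hand side $z_1(\lambda)+\delta z_{k-1}(\lambda)$, whose growth is controlled by fewer iterations — a contradiction with the identity. Alternatively, and perhaps more robustly, I would phrase it via Nevanlinna theory: the functional equation forces $T(r,f)$ to grow at least like the $(k-1)$-fold iterate of itself composed with linear maps, which is incompatible with finite (small) order; the standard reference here is the Gross–type or Bergweiler–type results on entire solutions of $f(g)=h$ with order constraints.

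**The main obstacle** is making precise the step "an accumulation point of $\mathrm{Fix}(F^k)$ yields a non-constant holomorphic disk in the solution set along which a genuine iterated-composition functional equation holds," and then turning the order-$<\frac12$ hypothesis into a quantitative contradiction. The subtlety is that the raw relation is $f(z_j)=z_{j+1}+\delta z_{j-1}$, a three-term relation, not a clean iteration, so one must carefully compose around the $k$-cycle to isolate a term of the form $(\text{large iterate of }f)$ versus $(\text{smaller iterate})$; the linear $\delta$-terms must be shown not to conspire to cancel the growth. The $\cos\pi\rho$ theorem (equivalently: for $\rho<\frac12$, $\log\min_{|z|=r}|f(z)|\sim \log\max_{|z|=r}|f(z)|$ along a sequence $r_n\to\infty$) is exactly what prevents such cancellation, because it says $f$ cannot be small anywhere on those circles; propagating this through $k$ compositions while tracking the disk parameter $\lambda$ is where the real work lies. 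I would expect the write-up to invoke a known lemma on entire solutions of composite functional equations of small order rather than re-deriving the Wiman–Valiron estimates from scratch.
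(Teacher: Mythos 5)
Your opening moves are correct and match the paper: the solution set of \eqref{system} is an analytic subset of $\C^k$, an accumulation point forces a positive-dimensional component, and you have identified the right analytic input for the order-$<\tfrac12$ hypothesis, namely the $\cos\pi\rho$/Wiman theorem giving radii $r_n\to\infty$ along which the minimum modulus of an entire function of small order tends to $\infty$. But the step you yourself flag as ``the main obstacle'' --- unwinding the relations around the $k$-cycle to produce a functional equation in iterates of $f$ and then running a Nevanlinna-type growth comparison --- is precisely where the plan breaks down, and it is not what the paper does. The recursion $z_{j+1}=f(z_j)-\delta z_{j-1}$ is second order, so eliminating the intermediate variables does not yield a clean tower $f\circ\cdots\circ f$; the $\delta$-terms do not organize into ``lower-order corrections'' amenable to an order comparison, and controlling the order of the resulting composite along a parametrizing disk would require substantially more work than the statement deserves.

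The paper's argument sidesteps all of this and never composes $f$ with itself. Set $g(z):=(f(z)-f(0))/z$, which also has order $<\tfrac12$, and apply Wiman to get $r_n\to\infty$ with $m(r_n):=\inf_{|z|=r_n}|g(z)|\to\infty$. A positive-dimensional analytic subset of $\C^k$ cannot be compact, so the alleged positive-dimensional component $V$ of the solution set is unbounded. Choose $n$ large so that $V$ meets the polydisk $D(0,r_n)^k$; by connectedness $V$ also meets $\partial D(0,r_n)^k$, so there is a solution $(z_0,\dots,z_{k-1})\in V$ with $\max_j|z_j|=r_n$. The system \eqref{system} is invariant under cyclic permutation of the indices, so one may relabel and assume $|z_0|=r_n$ and $|z_j|\le r_n$ for all $j$. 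Now the \emph{single} equation $f(z_0)=z_1+\delta z_{k-1}$ already gives the contradiction: its right-hand side has modulus at most $(1+|\delta|)\,r_n$, while $|f(z_0)|\ge r_n\,m(r_n)-|f(0)|$ is eventually far larger. The cyclic symmetry of \eqref{system} is exactly what lets you align the Wiman circle with the coordinate appearing inside $f$; once that alignment is made, no elimination, no parametrizing disk, and no iterate-of-$f$ growth estimate is needed. Replace your composition scheme with this polydisk-boundary argument.
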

\begin{proof}
Consider the entire function
$$
g(z):= \frac{f(z) - f(0)}{z}.
$$
Write
$$
m(r) := \inf_{|z| = r} |g(z)|.
$$
Since $f$ is assumed to have order of growth strictly less than $\frac{1}{2}$, so does $g$, and Wiman's Theorem \cite{wiman} implies that there exist radii $r_n \rightarrow \infty$ for which $m(r_n)\rightarrow \infty$.

Suppose for the purpose of a contradiction that the solution set in $\mathbb C^k$ of the system \eqref{system} is not discrete. Then there exists an unbounded connected component $V$. Let $n \in \mathbb N$ be such that $V$ intersects the polydisk $D(0,r_n)^k$. Then $V$ also intersects the boundary $\partial D(0,r_n)^k$, say in a point $(z_0, \ldots , z_{k-1})$. By the symmetry of the equations in \eqref{system} we may then well assume that $|z_0| = r_n$, and of course that $|z_j| \le r_n$ for $j= 1, \ldots , k-1$.

By Wiman's Theorem we may assume that $|g(z_0)|$ is arbitrarily large, and in particular that $|f(z_0)| > (1+|\delta|) r_n$. But this contradicts the first equation in \eqref{system}, completing the proof.
\end{proof}

We now turn to the question whether transcendental H\'enon maps always have periodic points.
Recall that if $f$ is an  entire transcendental function, then  ${\rm Fix}(F^2)$ has infinite cardinality by \cite{Rosenbloom}.

\begin{prop}\label{Existence2}
If $F(z,w)=(f(z)-\delta w,z)$  is a transcendental H\'enon map, then ${\rm Fix}(F^2)\neq \varnothing$ unless if $\delta=-1$ and $f(z)=e^{h(z)}$ for some holomorphic function $h(z)$.
 If $\delta\neq -1,$ the set  ${\rm Fix}(F^2)\neq\varnothing$ has infinite cardinality.
   \end{prop}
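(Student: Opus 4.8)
The plan is to reduce the statement to the one-variable theorem of Rosenbloom \cite{Rosenbloom}, namely that the second iterate of a transcendental entire function has infinitely many fixed points, by decoupling the period-two system \eqref{Periodtwo}. The first step is to describe ${\rm Fix}(F^2)$ explicitly. Since $F^2(z,w)=\br{f(f(z)-\delta w)-\delta z,\,f(z)-\delta w}$, the equations $F^2(z,w)=(z,w)$ read $f(f(z)-\delta w)-\delta z=z$ and $f(z)-\delta w=w$. The second gives $f(z)=(1+\delta)w$, hence $f(z)-\delta w=w$, and substituting into the first yields $f(w)=(1+\delta)z$. Conversely, these two relations force $F^2(z,w)=(z,w)$. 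Thus $(z,w)\in{\rm Fix}(F^2)$ if and only if $(z_0,z_1):=(z,w)$ solves \eqref{Periodtwo}.

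If $\delta=-1$, the system \eqref{Periodtwo} becomes $f(z)=0$ and $f(w)=0$, so ${\rm Fix}(F^2)$ is the product of the zero set of $f$ with itself. A nonvanishing entire function is exactly one of the form $e^{h}$ with $h$ entire (take $h$ a branch of $\log f$), so when $f(z)=e^{h(z)}$ one has ${\rm Fix}(F^2)=\varnothing$; otherwise $f$ has a zero $a$, and then even $(a,a)\in{\rm Fix}(F)\subseteq{\rm Fix}(F^2)$. This both proves the claim when $\delta=-1$ and shows the exceptional case is genuinely exceptional.

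If $\delta\neq-1$, set $\phi(z):=\frac{f(z)}{1+\delta}$, which is again a transcendental entire function. By the computation of the first paragraph, $(z,w)\in{\rm Fix}(F^2)$ if and only if $w=\phi(z)$ and $z=\phi(w)$, i.e. if and only if $z\in{\rm Fix}(\phi\circ\phi)$ and $w=\phi(z)$. Hence $z\mapsto(z,\phi(z))$ is a bijection from ${\rm Fix}(\phi\circ\phi)$ onto ${\rm Fix}(F^2)$, and it is injective because the first coordinate of $(z,\phi(z))$ recovers $z$. By Rosenbloom's theorem ${\rm Fix}(\phi\circ\phi)$ is infinite, and therefore so is ${\rm Fix}(F^2)$.

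I do not expect a real obstacle here: the whole argument is the algebraic decoupling of \eqref{Periodtwo} together with the citation of Rosenbloom's theorem (which applies directly to the transcendental entire function $\phi$, so no separate argument that $\phi\circ\phi$ is transcendental is needed). The only points that deserve a word of care are the identification of zero-free entire functions with exponentials in the case $\delta=-1$, and checking that the correspondence in the case $\delta\neq-1$ produces value-distinct points, which it does as just noted.
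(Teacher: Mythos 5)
Your proof is correct and follows essentially the same route as the paper: you decouple the period-two system by the substitution $w=\phi(z)$, $z=\phi(w)$ with $\phi=f/(1+\delta)$ (the paper's $g$), treat $\delta=-1$ separately via the zero set of $f$, and invoke Rosenbloom's theorem for $\text{Fix}(\phi\circ\phi)$ when $\delta\neq-1$. The only difference is cosmetic — you derive the system by expanding $F^2$ directly rather than by reading it off from the general relation $w_{j+1}=z_j$ as in display \eqref{system}, and you add the small observation that a zero $a$ of $f$ already gives a fixed point $(a,a)$ of $F$ itself when $\delta=-1$.
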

\begin{proof}
Let $Z:=\{f=0\}\subset \C$.  If $\delta=-1$ the solutions of the system (\ref{Periodtwo}) is given by the points in $Z\times Z\subset \C^2$.  In particular there are no solution if and only if $f(z)=e^{h(z)}$ for some holomorphic function $h(z)$.

 If $\delta\neq -1$, let  $g(z):=\frac{f(z)}{1+\delta}$.   The solutions of the  system  (\ref{Periodtwo2})
are the points  $$\{(z_0,g(z_0))\in \C^2\colon z_0\in  {\rm Fix}(g^2)\},$$
which is non-empty and has infinite cardinality. \end{proof}

\begin{remark}
Notice that the set  $Z$  has finite cardinality if and only if $f$ is of the form  $ f(z) =  p(z) e^{h(z)}$, where $p$ is a nonzero polinomial $p$  and $h$ is entire function.  Thus in all other cases, even if $\delta=-1$, the set ${\rm Fix}(F^2)$ has infinite cardinality.
\end{remark}

If $f:\C\ra\C$ is an entire transcendental function, we have additional information on the multiplier of repelling periodic points of period $n\geq 2$. Indeed we have the following theorem \cite[Theorem 1.2]{Ber2}:
\begin{thm}\label{Multipliers}
Let $f$ be a transcendental entire function and let $n\in\N$, $n\geq2$.
Then $f$ has a sequence $z_k$ of periodic points of period $n$ such that
\begin{equation}
|(f^n)'(z_k)|\ra\infty   \text{\ as \ $  k\ra\infty$.}
\end{equation}
\end{thm}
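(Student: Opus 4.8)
I would follow Bergweiler's approach \cite{Ber2}. Fix $n\geq 2$ and set $g:=f^n$, a transcendental entire function. A periodic point $z$ of period $n$ is a fixed point of $g$ which is not a fixed point of $f^d$ for any $d\mid n$ with $d<n$, and its multiplier is $(f^n)'(z)=g'(z)=\prod_{j=0}^{n-1}f'(f^j(z))$, the same value at each point of its cycle $z,f(z),\dots,f^{n-1}(z)$. The plan is to argue by contradiction: assume there is $M>0$ such that all but finitely many periodic points of period $n$ satisfy $|g'(z)|\leq M$, and derive a contradiction.

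I would first record the two facts on which the argument rests. The existence part: for $n\geq 2$ the entire function $g-\mathrm{id}$ has infinitely many zeros of exact period $n$ — this is Baker's theorem, sharpened by Bergweiler — and in fact the Second Main Theorem of Nevanlinna theory, applied to $g$ with the fixed targets $0,\infty$ and the moving target $\mathrm{id}$, gives more: the reduced counting function $\overline{N}(r,1/(g-\mathrm{id}))$ of the period-$n$ points is comparable to $T(r,g)$ up to an $S(r,g)$ term. Hence, under the standing assumption, one gets an infinite sequence $(z_k)$ of periodic points of period $n$ with $|z_k|\to\infty$ and $|g'(z_k)|\leq M$ for all $k$, and these points account for essentially all of the characteristic $T(r,g)$.

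Next comes the step where the structure $g=f^n$ with $n\geq 2$ must genuinely be used: the conclusion is false for a general transcendental entire function with abundant fixed points — every fixed point of $z\mapsto z+\sin z$, for instance, has multiplier $0$ or $2$ — so one cannot conclude from the Nevanlinna theory of $g$ alone. Instead I would exploit that each $z_k$ lies on an $n$-cycle $z_k=\zeta_k^{(0)},\zeta_k^{(1)},\dots,\zeta_k^{(n-1)}$ with $f(\zeta_k^{(j)})=\zeta_k^{(j+1\bmod n)}$, so that the bound $|g'(z_k)|\leq M$ reads $\prod_{j=0}^{n-1}|f'(\zeta_k^{(j)})|\leq M$ along each cycle. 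Feeding this into the Nevanlinna theory of $f$ itself — through the lemma on the logarithmic derivative applied to $f'$ along the finite orbits, together with the comparison between $T(r,f^n)$ and $T(r,f)$ — one shows that the product of the $f'$-values cannot stay bounded over a family of cycles carrying essentially all of $T(r,f^n)$, contradicting $|g'(z_k)|\leq M$.

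The main obstacle is precisely this last step: converting ``abundance of period-$n$ cycles'' together with the iterate structure into a quantitative incompatibility with a uniform multiplier bound. This is the technical core of Bergweiler's proof, and it relies on the Second Main Theorem together with a careful tracking of the zeros of $f^n-\mathrm{id}$ along the $f$-orbits. By comparison, the bookkeeping needed to pass from ``fixed point of $f^n$'' to ``point of exact period $n$'' is routine, since the points of period $d\mid n$ with $d<n$ contribute a counting function of strictly smaller growth.
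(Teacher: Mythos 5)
The paper does not prove Theorem \ref{Multipliers} at all: it quotes the statement verbatim from Bergweiler's paper \emph{Quasinormal families and periodic points}, cites it as \cite[Theorem 1.2]{Ber2}, and then uses it as a black box in the corollary that follows. So there is no in-paper argument for your sketch to be compared against; the only meaningful comparison is with Bergweiler's original proof.

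On its own merits, your proposal reads the statement correctly (the example $z\mapsto z+\sin z$ is a clean illustration of why $n\geq 2$ is essential), and the broad plan --- abundance of period-$n$ cycles plus a uniform multiplier bound must be incompatible --- is reasonable. But there are two real gaps. First, the intermediate claim that the Second Main Theorem with targets $0$, $\infty$, and the small function $\mathrm{id}$ already forces $\overline N\bigl(r,1/(g-\mathrm{id})\bigr)$ to be comparable to $T(r,g)$ is not correct for a general entire $g$: that application of the SMT only shows that $\overline N(r,1/g)+\overline N\bigl(r,1/(g-\mathrm{id})\bigr)$ dominates $T(r,g)$ up to an error term, and nothing in that computation prevents all the mass from sitting in the first summand. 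For instance $g(z)=e^z+z$ has $g-\mathrm{id}=e^z$ with no zeros whatsoever, yet no contradiction with the SMT arises. The iterate structure $g=f^n$ with $n\geq 2$ is therefore already indispensable at that step --- the ``comparability'' you want is itself a nontrivial theorem (Baker, Bergweiler, Hinkkanen and others), not a corollary of a one-line SMT application. Second, and more importantly, the step you yourself flag as the ``technical core'' --- deriving a contradiction between a uniform multiplier bound along cycles and the counting of period-$n$ points via the lemma on the logarithmic derivative --- is left in the form ``one shows that\dots'' with no argument given. Since that is exactly where the content of the theorem lives, what you have is a road map, not a proof. It is also worth flagging that the title of \cite{Ber2} indicates Bergweiler's actual mechanism is a quasinormality criterion: a bound on multipliers of fixed points of $f^n$ on suitable disks forces quasinormality of a family of rescaled iterates, and this is then shown to be incompatible with $f$ being transcendental. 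That is a genuinely different device from the direct Nevanlinna bookkeeping along orbits you propose, and it is not obvious the two routes are interchangeable; if you want to reconstruct the proof you should engage with the quasinormality framework directly rather than with SMT estimates along cycles.
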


\begin{corollary}[Non-empty Julia set] If $\delta\neq -1$, then  $F$ admits infinitely many   saddle points of period $1$ or $2$, and thus
its Julia set  is non-empty.
\end{corollary}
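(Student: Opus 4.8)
The plan is to upgrade the periodic points of Proposition~\ref{Existence2} to saddles by controlling their multipliers with Bergweiler's Theorem~\ref{Multipliers}, and then to use the elementary fact that a saddle periodic point cannot lie in the Fatou set.

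Recall from the proof of Proposition~\ref{Existence2} that, since $\delta\neq -1$, the function $g(z):=\frac{f(z)}{1+\delta}$ is transcendental entire and $z\mapsto(z,g(z))$ maps ${\rm Fix}(g^2)$ into ${\rm Fix}(F^2)$. Applying Theorem~\ref{Multipliers} to $g$ with $n=2$ produces a sequence of pairwise distinct points $z_k$ with $g^2(z_k)=z_k$ and $|(g^2)'(z_k)|\to\infty$ (pairwise distinctness after passing to a subsequence, since otherwise $(g^2)'$ would take one value infinitely often). Set $p_k:=(z_k,g(z_k))\in{\rm Fix}(F^2)$; each $p_k$ is a periodic point of $F$ of period $1$ or $2$, and distinct $z_k$ give distinct $p_k$. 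Note that Theorem~\ref{Multipliers} applies only for $n\geq2$, which is the reason the statement allows period $2$ and not only period $1$.

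Next I would compute $DF^2(p_k)$. The Jacobian $DF(z,w)$ has first row $(f'(z),-\delta)$ and second row $(1,0)$, hence has determinant $\delta$; so $DF^2(p_k)=DF(F(p_k))\cdot DF(p_k)$ has determinant $\delta^2$, and multiplying out the two matrices, together with $(g^2)'(z_k)=\frac{f'(z_k)\,f'(g(z_k))}{(1+\delta)^2}$, one finds that its trace equals $(1+\delta)^2(g^2)'(z_k)-2\delta$. Thus the eigenvalues $\lambda_k^{\pm}$ of $DF^2(p_k)$ satisfy $\lambda_k^{+}\lambda_k^{-}=\delta^2\neq 0$ and $|\lambda_k^{+}+\lambda_k^{-}|\to\infty$, which forces, for all large $k$, one eigenvalue of modulus $>1$ and one of modulus $<1$. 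Hence $p_k$ is a saddle periodic point of period $1$ or $2$, and infinitely many of them exist.

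To finish, I would show that any saddle periodic point $p$ of period $q\in\{1,2\}$ lies in the Julia set, which yields non-emptiness. If not, then $\{F^{n}\}$, and hence $\{F^{qn}\}$, is $\P^2$-normal near $p$, so some subsequence $F^{qn_j}$ converges locally uniformly to $h\in{\rm Hol}(U,\P^2)$. Since $F^{qn_j}(p)=p\in\C^2$, the value $h(p)$ lies off $\ell^\infty$, so the openness argument from the proof of Lemma~\ref{projectiveimpliesintrinsic} (Hurwitz in an affine chart) gives $h(U')\subset\C^2$ near $p$; then $F^{qn_j}\to h$ with values in $\C^2$ on a compact neighbourhood of $p$, and the Cauchy estimates give $DF^{qn_j}(p)\to Dh(p)$. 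But $DF^{qn_j}(p)=(DF^{q}(p))^{n_j}$ has spectral radius $\geq|\lambda|^{n_j}\to\infty$ for the expanding eigenvalue $\lambda$ of $DF^q(p)$, a contradiction. The linear algebra and this normality argument are both routine; the only delicate point is that, working with $\P^2$-normality rather than ordinary normality, one must first rule out a limit valued on $\ell^\infty$, which is precisely what Lemma~\ref{projectiveimpliesintrinsic} provides. The genuinely substantial ingredient, Theorem~\ref{Multipliers}, is quoted rather than proved.
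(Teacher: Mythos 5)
Your proposal is correct and follows essentially the same route as the paper: apply Bergweiler's Theorem~\ref{Multipliers} to $g=f/(1+\delta)$ with $n=2$, compute the determinant $\delta^2$ and trace $f'(g(z_0))f'(z_0)-2\delta$ of $d_{(z_0,g(z_0))}F^2$, and conclude saddle behavior for large multiplier. The only difference is that you spell out the (routine) argument that a saddle periodic point must lie in the Julia set --- including the observation that $\P^2$-normality and $F^{qn_j}(p)=p\in\C^2$ together force the limit to take values in $\C^2$ near $p$ --- a step the paper simply asserts with ``thus.''
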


\begin{proof}
We have seen that for all $z_0\in {\rm Fix}(g^2)$, the point  $(z_0,g(z_0))\in{\rm Fix}(F^2)$.
A computation using the explicit form for $F$ gives
\begin{displaymath}
d_{(z_0,g(z_0))}F^2= \left( \begin{array}{cc}
f'(g(z_0))\cdot f'(z_0) -\delta & -\delta f'(g(z_0))  \\
f'(z_0) & -\delta    \end{array} \right)
\end{displaymath}

Since  $${\rm det}\, d_{(z_0,g(z_0))}F^2= \delta^2, \quad {\rm tr}\, d_{(z_0,g(z_0))}F^2=f'(g(z_0))\cdot f'(z_0) -2\delta,$$ the point  $(z_0,g(z_0))$  is a periodic saddle point  for $|f'(g(z_0))\cdot f'(z_0)|$ sufficiently large (Observe that $g'(g(z_0))\cdot g'(z_0)$, and hence $f'(g(z_0))\cdot f'(z_0)$,  can be taken  arbitrarily large by Theorem~\ref{Multipliers}).

\end{proof}

\subsection{Invariant algebraic curves}
It follows from a result of Bedford-Smillie \cite{BS91a} that a polynomial H\'enon map does not have any invariant algebraic curve. Indeed, given any algebraic curve, the normalized currents of integration on the push-forwards of this curve converge to the $(1,1)$ current $\mu^-$, whose support does not lie on an algebraic curve.

This type of argument is not available for transcendental dynamics. Here we present a different argument.

\begin{thm}\label{thm:invariantcurve}
Let $F$ be a holomorphic automorphism of the form
$$
F: (z,w) \mapsto (f(z) - \delta w, z),
$$
where $f$ is an entire function, and assume that $F$ leaves invariant an irreducible algebraic curve $\{H(z,w) = 0\}$. Then $f$ is affine.
\end{thm}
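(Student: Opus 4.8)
The plan is to argue by degree/growth considerations about the polynomial $H$, exploiting that the curve $\{H=0\}$ is invariant and that $f$ is entire. Write $H(z,w) = \sum_{j=0}^{d} a_j(z) w^j$ with $a_d \not\equiv 0$, so $d = \deg_w H$. Invariance means $H\circ F = c\, H$ for some nonzero constant $c$ (since $H$ is irreducible and $H\circ F$ vanishes on the same irreducible curve, $H\circ F$ must be a constant multiple of $H$; here one uses that $F$ is an automorphism, so $H\circ F$ is again a polynomial of the same total degree). Now substitute: $H(F(z,w)) = H(f(z)-\delta w,\; z) = \sum_{j=0}^{d} a_j(f(z)-\delta w)\, z^j$.

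The key step is to compare this with $c\,H(z,w) = c\sum_{j=0}^d a_j(z) w^j$ as polynomials in $w$ (with coefficients that are entire functions of $z$). The left-hand side, viewed as a polynomial in $w$, has degree in $w$ equal to $\deg(a_d)\cdot$(contribution from the top-degree term of $a_d$ composed with $f(z)-\delta w$) — more precisely, if $\deg a_d = e$, then $a_d(f(z)-\delta w)$ contributes a term of $w$-degree $e$ with leading coefficient (in $w$) a nonzero constant times $\delta^e$. Meanwhile the highest $w$-power on the right is $w^d$. So the $w$-degree of $H\circ F$ is $\max_j (j \text{ from } w^j \text{ in } a_j(f(z)-\delta w))$; the top term comes from $a_d(f(z)-\delta w)$ and has $w$-degree $\deg a_d$. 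Balancing with the right side forces a relation between $d$ and $\deg a_d$. Then I would extract the coefficient of the top power of $w$ and of lower powers, and read off an equation expressing $a_{d-1}(f(z)-\delta w)$, etc., in terms of lower data; iterating, one obtains an identity forcing $f$ itself to appear polynomially, hence $f$ is a polynomial. Once $f$ is known to be a polynomial, $F$ is a polynomial Hénon (or affine) map, and the Bedford–Smillie result \cite{BS91a} already excludes invariant algebraic curves unless $f$ is affine — alternatively one finishes directly by a degree count in $z$: matching the top-degree-in-$z$ coefficients of $H\circ F = cH$ forces $\deg f \le 1$.

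The main obstacle I expect is bookkeeping the bidegree: one must be careful that cancellations among the various $a_j(f(z)-\delta w)$ do not conspire to lower the $w$-degree below $\deg a_d$, and that the coefficient of $w^{\deg a_d}$ on the left is genuinely a nonzero constant (this uses $\delta \neq 0$, which holds since $F$ is an automorphism). The cleanest route is probably: pass to the line at infinity and look at how $F$ extends to $\P^2$, noting $F$ maps $\ell^\infty \setminus \{[1:0:0]\}$ in a controlled way; then the closure of the invariant curve in $\P^2$ must be respected, and the presence of the essential singularity of $f$ at infinity (if $f$ were transcendental) is incompatible with the curve's intersection with $\ell^\infty$ being a finite set. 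Concretely, homogenize $H$ and track the order of vanishing / pole structure at $[1:0:0]$: transcendental growth of $f$ forces $H\circ F$ to grow faster along suitable sequences than any polynomial multiple of $H$, the desired contradiction. I would present whichever of these two arguments (direct coefficient comparison vs. behavior at $\ell^\infty$) turns out to have the shorter write-up, and fall back on the degree count in $z$ to conclude $f$ affine once polynomiality of $f$ is in hand.
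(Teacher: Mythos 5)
There is a fatal flaw in your primary approach, at the very first step. You write that since $F$ is an automorphism, $H\circ F$ is ``again a polynomial of the same total degree,'' and then deduce $H\circ F = cH$ for a nonzero constant $c$. But $F$ is only a \emph{holomorphic} automorphism: when $f$ is transcendental (which is exactly the case you need to rule out), $H\circ F(z,w) = H(f(z)-\delta w, z)$ is a transcendental entire function, not a polynomial. What invariance plus irreducibility actually gives you, via the Nullstellensatz for entire functions, is $H\circ F = c(z,w)\, H(z,w)$ with $c$ an \emph{entire function}, not a constant. With $c$ unknown and potentially transcendental, the subsequent coefficient-of-$w^k$ bookkeeping does not close up: the $w$-degree of the left side is $\deg_z H$ and of $H$ itself is $\deg_w H$, and there is no reason these should match since $c$ may carry $w$-dependence. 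The iterated ``read off lower data'' step is too vague to substitute for a real argument. In short, the polynomial identity $H\circ F = cH$ that powers your whole first route is precisely what fails in the transcendental case.

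Your fallback idea — growth at infinity and behavior near $\ell^\infty$ — is much closer to what the paper actually does, but you leave it as a sketch. The paper's proof does not homogenize or track pole orders at $[1:0:0]$; instead it argues pointwise on the curve. It first dispatches the easy graph cases $\{z=g(w)\}$ and $\{w=g(z)\}$ by a direct functional-equation argument (a case you do not mention, and which is needed since those curves may fail to meet infinitely many vertical lines). For the general case it picks points $(z_j,w_j)\in\{H=0\}$ with $|z_j|\to\infty$ and $|f(z_j)|>|z_j|^j$ (using transcendence of $f$ and the fact that the curve meets almost every line $\{z=c\}$), bounds $|w_j|<|z_j|^d$ from the algebraicity of the curve (so that $\delta w_j$ cannot cancel the huge $f(z_j)$), and then shows that the top $z$-degree block $p(w_j')\,(z_j')^{N_1}$ of $H$ dominates every other monomial at $(z_j',w_j')=F(z_j,w_j)$, so $H(z_j',w_j')\neq 0$ — contradicting invariance. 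This is a quantitative domination estimate on specific points, not an identity between polynomials. If you want to pursue your second route, you would need to make this growth-versus-algebraicity comparison precise; as written, ``grows faster than any polynomial multiple of $H$'' is not a contradiction, since you must compare values of $H$ at the image points, not compare $H\circ F$ against multiples of $H$.
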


As we remarked earlier, the statement is known when $f$ is a polynomial of degree at least $2$, so we will assume that $f$ is a transcendental entire function and obtain a contradiction. Let us first rule out the simple case where $\{H = 0\}$ is given by a graph $\{z = g(w)\}$. In that case the invariance under $F$ gives
$$
f(g(w)) - \delta w = g \circ g(w).
$$
Writing $f(z) = g(z)+h(z)$ gives
$$
h\circ g(w) = \delta w,
$$
which implies that $g$ and $h$ are invertible and thus affine. But then $f$ is also affine and we are done.

For a graph of the form $\{w = g(z)\}$ we obtain the functional equation
$$
z = g(f(z) - \delta g(z)),
$$
which again implies that the function $g$ is affine, and then so is $f$.

For the general  case $\{H = 0\}$, where we may now assume that we are not dealing with a graph, we will use the following two elementary estimates.

\begin{lemma}\label{lemma:fastgrowth}
There exist $(z_j, w_j) \in \{H = 0\}$, with $|z_j| \rightarrow \infty$, for which
$$
|f(z_j)| > |z_j|^j.
$$
\end{lemma}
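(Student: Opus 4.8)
The plan is to exploit the fact that $\{H=0\}$ is an algebraic curve which is not a graph over either coordinate axis, so that its projection to the $z$-axis is a branched cover of $\C$ with more than one sheet. More precisely, write $H(z,w)=\sum_{j=0}^d a_j(z)w^j$ with $a_d\not\equiv 0$ and $d=\deg_w H\geq 2$ (the case $d=1$ would make the curve a graph $\{w=g(z)\}$ with $g$ rational, handled separately above; if $\deg_w H=0$ the curve is a union of vertical lines $\{z=c\}$, which cannot be $F$-invariant since $F$ moves such lines to the horizontal line $\{w=c\}$). The key point is that the invariance $F(\{H=0\})=\{H=0\}$ forces points of the curve with large $|z|$-coordinate to have their images again on the curve, and the image has first coordinate $f(z)-\delta w$; I want to produce points on the curve where $w$ is \emph{not} large compared to a prescribed power of $|z|$, so that $|f(z)-\delta w|$ being on the curve over a point of controlled modulus yields the growth estimate.

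Concretely, first I would fix $j$ and argue as follows. Since the curve is not contained in any vertical line, for each large $R$ the set $\{(z,w)\in\{H=0\}:|z|=R\}$ is nonempty and in fact, parametrizing by $z$, consists of $d$ points (counted with multiplicity) $w=w_1(z),\dots,w_d(z)$ depending algebraically on $z$; by the theory of Puiseux series at infinity each branch satisfies $|w_i(z)|\le C|z|^{N}$ for some fixed $C,N$ once $|z|$ is large. Now pick a branch $(z,\phi(z))$ of the curve and iterate: $F(z,\phi(z))=(f(z)-\delta\phi(z),z)$ lies on $\{H=0\}$, and its second coordinate is $z$, so the first coordinate $f(z)-\delta\phi(z)$ is one of the (boundedly many, polynomially bounded) $w$-values of the curve over... — wait, that is the wrong projection. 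The correct observation: if $(z,w)\in\{H=0\}$ then $(f(z)-\delta w,\,z)\in\{H=0\}$, i.e. $z$ is a root of $w\mapsto H(f(z)-\delta w,\,?)$; reading this through $\deg_z H$, the point $f(z)-\delta w$ is the first coordinate of a point of the curve lying over second coordinate $z$. So if I choose $(z,w)$ on the curve with $|z|$ large and $|w|$ controlled — which I can do by staying on a fixed branch — then $f(z)-\delta w$ must be a $z$-value of the curve over a point of modulus $|z|$; but such $z$-values are themselves polynomially bounded in $|z|$ by the Puiseux estimate applied to the \emph{other} projection. That would give $|f(z)|\le |\delta||w| + C|z|^N$, a polynomial bound — the opposite of what we want!

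So the real content must be that this polynomial bound is impossible because $f$ is transcendental, and the lemma as I have restated it is getting the logic backwards; let me reverse it. The honest plan: the curve being $F$-invariant and not a graph, combined with $\deg H$ finite, should \emph{force} $f$ to satisfy a polynomial growth bound along a sequence $|z_j|\to\infty$, contradicting transcendence — and Lemma \ref{lemma:fastgrowth} is presumably a preliminary step whose role is the \emph{opposite}, namely to show that transcendence of $f$ produces points on the curve where $f$ grows faster than any polynomial, which is then used to derive a contradiction with the degree bound. With that reading, the proof is: by Wiman-type or elementary minimum-modulus reasoning — or simply because $f$ is transcendental, $\max_{|z|=r}|f(z)|/r^k\to\infty$ for every $k$ — choose $r_j\to\infty$ with $\max_{|z|=r_j}|f(z)|>(2r_j)^{j}$, pick $z_j^*$ on the circle $|z|=r_j$ realizing (nearly) this maximum, and then show that the curve $\{H=0\}$ \emph{has} a point with $z$-coordinate near $z_j^*$: this holds because the projection $\{H=0\}\to\C_z$ is surjective (its image is $\C$ minus at most finitely many points, the zeros of $a_d$, by the surjectivity of nonconstant holomorphic/algebraic maps), so after perturbing $z_j^*$ slightly within $|z|\sim r_j$ to avoid those finitely many bad points we land on the curve and keep $|f(z_j)|>|z_j|^j$. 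The step I expect to be the genuine obstacle is making the perturbation harmless: one must ensure that moving $z_j^*$ off the finitely many excluded values (and handling the case $d=0$, already dispatched) does not destroy the lower bound $|f(z_j)|>|z_j|^j$, which is fine since the exceptional set is finite and independent of $j$ while the required modulus $r_j$ tends to infinity, so for large $j$ there is plenty of room on the circle $|z|=r_j$ to find a good $z_j$.
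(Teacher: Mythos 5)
The final paragraph of your proposal is exactly the paper's proof: since $\{H=0\}$ is not a graph, its projection to the $z$-axis omits only finitely many values, and transcendence of $f$ gives points on arbitrarily large circles where $|f|$ beats any prescribed power of $|z|$; the long middle detour through iterating $F$ is a dead end you correctly abandon. Your worry about the perturbation step is also unnecessary: the finitely many omitted $z$-values lie in some fixed bounded disk, so once $r_j$ exceeds that bound \emph{every} point on the circle $|z|=r_j$ is the $z$-coordinate of a point of the curve, and you can take $z_j$ to be exactly the maximum-modulus point.
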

\begin{proof}
As we have already shown that $\{H = 0\}$ is not a graph, it follows that $\{H = 0\}$ intersects all but finitely many lines $\{z = c\}$. The result follows from the assumption that $f$ is transcendental.
\end{proof}

We use two forms for the polynomial $H$:
\begin{enumerate}
\item[(1)] $H(z,w) = p(w)z^{N_1} + \sum_{k=0}^{N_1-1}\sum_{\ell=0}^{N_2}\alpha_{k,\ell}z^kw^\ell.$\\
\item[(2)] $H(z,w) = q_0(z)+\sum_{\ell=1}^n q_\ell(z)w^\ell.$
\end{enumerate}
Note that $q_0$ cannot vanish identically, because otherwise $w$ is a factor of $H$ and the zero set is not irreducible.

%

\begin{lemma} \label{secondlemma}
There exist $d$ large enough so that if $H(z,w)=0$ for $|z|$ sufficiently large, then $|w|<|z^d|.$
\end{lemma}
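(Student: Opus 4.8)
The plan is to use form (2) of the polynomial $H$, namely $H(z,w) = q_0(z) + \sum_{\ell=1}^n q_\ell(z) w^\ell$, and argue that a point $(z,w)$ on $\{H=0\}$ with $|z|$ large cannot have $|w|$ too big, by balancing the lowest-order term $q_0(z)$ against the contribution of the higher powers of $w$. First I would fix $d$ larger than the degree of $q_0$ plus one (more precisely, larger than $\deg q_0 - \min_{1\le\ell\le n}\deg q_\ell$ divided by the smallest relevant $\ell$, but it suffices to take $d > \deg q_0$). Since $q_0$ is not identically zero (as noted in the excerpt, otherwise $w \mid H$ and the curve is reducible), for $|z|$ large we have a lower bound $|q_0(z)| \geq c|z|^{\deg q_0}$ for some $c>0$.

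Next I would suppose, toward a contradiction, that $|w| \geq |z|^d$ for some point $(z,w) \in \{H=0\}$ with $|z|$ arbitrarily large. From $H(z,w)=0$ we get $q_0(z) = -\sum_{\ell=1}^n q_\ell(z) w^\ell$. Let $\ell_0$ be the largest index with $q_{\ell_0} \not\equiv 0$; factoring out $w^{\ell_0}$ and $q_{\ell_0}(z)$ and using $|w| \geq |z|^d \to \infty$ together with polynomial bounds $|q_\ell(z)| \le C|z|^{D}$ for a common degree bound $D$, the dominant term on the right is comparable to $|q_{\ell_0}(z)|\,|w|^{\ell_0} \gtrsim |z|^{D'}|z|^{d\ell_0}$ for the appropriate exponent, which grows strictly faster than $|q_0(z)| \lesssim |z|^{\deg q_0}$ once $d$ is chosen large enough. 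This forces $|q_0(z)|$ to be much smaller than the right-hand side, contradicting the equality — so no such $(z,w)$ exists, i.e. $|w| < |z|^d$ for all points of the curve with $|z|$ large.

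The one subtlety I expect to be the main obstacle is controlling the interior terms $q_\ell(z) w^\ell$ with $1 \le \ell < \ell_0$: these could in principle partially cancel the leading $q_{\ell_0}(z)w^{\ell_0}$ term, so a naive triangle-inequality lower bound on the right-hand side is not immediate. The clean way around this is to divide the equation $q_0(z) = -\sum_\ell q_\ell(z)w^\ell$ through by $w^{\ell_0}$: the left side becomes $q_0(z)/w^{\ell_0}$, which tends to $0$ since $|w|\ge|z|^d$ and $d\ell_0 > \deg q_0$; the right side is $-q_{\ell_0}(z) - \sum_{\ell<\ell_0} q_\ell(z) w^{\ell-\ell_0}$, and each term with $\ell<\ell_0$ carries a negative power of $w$, hence is $O(|z|^{D}/|z|^{d})\to 0$; so the right side is forced to converge to $-q_{\ell_0}(z)$. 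But $|q_{\ell_0}(z)| \to \infty$ (or is bounded below by a positive constant if $\ell_0$ is such that $q_{\ell_0}$ is a nonzero constant — in either case it does not go to $0$), giving the contradiction. Thus the choice $d > \deg q_0$ suffices, and one should also record that $d$ may be taken uniform in the (finitely many) points of $\{H=0\}$ over each fibre, which is automatic since the estimate depends only on $|z|$.
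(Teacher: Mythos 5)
Your approach is the same as the paper's: use form (2) of $H$ and observe that once $|w|$ greatly exceeds a suitable power of $|z|$, the top-degree-in-$w$ term $q_{\ell_0}(z)\,w^{\ell_0}$ dominates all the others, so $H(z,w)$ cannot vanish. The paper compresses this into one sentence; your version, dividing the relation $q_0(z) = -\sum_{\ell\ge 1} q_\ell(z)w^{\ell}$ through by $w^{\ell_0}$, is a clean way to make ``dominates'' precise without fussing over cancellations among the middle terms.

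One thing does need repair: the final claim that $d > \deg q_0$ suffices contradicts your own intermediate estimate, which requires $d$ to beat a \emph{common} degree bound $D$ for all the $q_\ell$, not just for $q_0$. Concretely, $H(z,w) = 1 + z^{100}w + w^{2}$ is irreducible with $q_0 \equiv 1$ (so $\deg q_0 = 0$), yet one branch of $\{H=0\}$ satisfies $|w| \sim |z|^{100}$ as $|z|\to\infty$, so no $d\le 100$ can work. Replace the last sentence with ``take $d > \max_{0\le \ell < \ell_0}\deg q_\ell$ (e.g.\ $d$ larger than the total degree of $H$)''; then the term $q_\ell(z)\,w^{\ell-\ell_0}$ is $O(|z|^{D-d})\to 0$ for every $\ell<\ell_0$, and $q_0(z)/w^{\ell_0}\to 0$ as well, so the equation forces $q_{\ell_0}(z)\to 0$, impossible for $|z|$ large. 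The method is right; only the bookkeeping of $d$ was off.
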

\begin{proof}
If  $|w|>|z|^{d}$ for arbitrarily large $|z|$ and $d$, then  $|w|^n|q_n(z)|$ dominates the other terms in the form (2), so $H(z,w)$ cannot vanish.
\end{proof}

\begin{proof}[Proof of Theorem \ref{thm:invariantcurve}]
By Lemma \ref{lemma:fastgrowth} there exist $(z_j, w_j)$ with $z_j \rightarrow \infty$,  $H(z_j,w_j)=0$ and $|f(z_j)| > |z_j|^j$.
Let $(z_j^\prime, w_j^\prime) = F(z_j, w_j)$ so that  $z^\prime_j = f(z_j)-\delta w_j$ and $w^\prime_j=z_j$.
Since $\{H=0\}$ is invariant we have that  $H(z^\prime_j,w_j^\prime)=0$.

 By Lemma \ref{secondlemma} there exists $d\in \N$ such that  $|w_j|<|z_j|^d$ for $j$ sufficiently large. Hence for $j$ sufficiently large
\begin{equation}\label{doublyso}
|z_j^\prime| = |f(z_j)-\delta w_j| \geq |f(z_j)|- |\delta w_j| \geq |f(z_j)|/2.
\end{equation}
It follows that
$$
|p(w^\prime_j) (z^\prime_j)^{N_1}| \ge c |z^\prime_j|^{N_1} \ge c |z_j^\prime|^{N_1 -1} \cdot \frac{|z_j|^j}{2},
$$
where $c>0$ is a constant.
But since $z_j = w_j^\prime$ it follows that for large enough $j$, all terms of the form $\alpha_{k,\ell}(z_j^\prime)^k(w_j^\prime)^\ell$ for $k \le N_1 -1$ will be negligible compared to $p(w^\prime_j) (z^\prime_j)^{N_1}$, which contradicts $H(z_j^\prime, w_j^\prime) = 0$.

\end{proof}

\section{Classification of recurrent components}\label{sectionrecurrent}

In this section we only assume that $F$ is a holomorphic automorphism of $\mathbb C^2$ with constant Jacobian $\delta$.
\begin{defn}
A point $x\in \C^2$ is \emph{recurrent} if its orbit $(F^n(x))$ accumulates  at $x$ itself. A periodic Fatou component $\Omega$ is called  \emph{recurrent} if there exists a point $z \in \Omega$ whose orbit $(F^n(z))$ accumulates at a point $w \in \Omega$.
\end{defn}

Since the class of holomorphic automorphism of $\mathbb C^2$ with constant Jacobian is closed under composition,  by replacing $F$ with an iterate we can restrict to the case where $\Omega$ is invariant.
For an invariant Fatou component $\Omega$, a  \emph{limit map} $h$  is a holomorphic function $h:\Omega\ra\P^2$  such that $f^{n_k}\ra h$ uniformly on compact sets of $\Omega$ for some subsequence $n_k\ra\infty$.

\begin{thm}\label{thm:recurrent} Let $F$ be a holomorphic automorphism of $\mathbb C^2$ with constant Jacobian $\delta$ and
let $\Omega$ be an invariant recurrent Fatou component for $F$. Then there exists a holomorphic retraction $\rho$ from $\Omega$ to a closed complex submanifold $\Sigma\subset \Omega$, called the \emph{limit manifold}, such that
for all limit maps $h$ there exists an automorphism $\eta$ of $\Sigma$ such that $h=\eta\circ \rho$. Every orbit converges to $\Sigma$, and $F|_\Sigma\colon \Sigma\to \Sigma$ is an automorphism. Moreover,
\begin{itemize}
\item If ${\rm dim}\,\Sigma=0$, then $\Omega$ is the basin of an attracting fixed point, and is biholomorphically equivalent to $\mathbb C^2$.
\item  If ${\rm dim}\,\Sigma=1$, either $\Sigma$ is  biholomorphic to a circular domain $A$, and there exists a biholomorphism from $\Omega$ to $A\times \C$ which conjugates the map $F$ to
$$(z,w)\mapsto (e^{i\theta}z, \frac{\delta}{e^{i\theta}} w),$$
or there exists $j\in \N$ such that $F^j|_\Sigma={\rm id}_\Sigma$, and
  there exists a biholomorphism from $\Omega$ to $\Sigma\times \C$ which conjugates the map $F^j$ to
$$(z,w)\mapsto (z, \delta^j w).$$
\item   ${\rm dim}\,\Sigma=2$  if and only if $|\delta| = 1$. In this case there exists a sequence of iterates converging to the identity on $\Omega$.
\end{itemize}
\end{thm}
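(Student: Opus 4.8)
The plan is to adapt the method of Bedford and Smillie \cite{BS1991} for polynomial H\'enon maps to the $\P^2$-setting, with the constant Jacobian hypothesis driving the trichotomy. Since $\Omega$ is invariant, $(F^n|_\Omega)$ is a normal family with values in $\P^2$, and its limit maps $h\in{\rm Hol}(\Omega,\P^2)$ are the basic objects. Recurrence produces a limit map $h_0=\lim_k F^{n_k}$ with $h_0(z_0)\in\Omega$ for some $z_0$, hence (Lemma \ref{projectiveimpliesintrinsic}) $h_0(\Omega)\subset\C^2$, and in fact $h_0(\Omega)\subset\Omega$ because $\Omega$ is a Fatou component. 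Every limit map commutes with $F$, since $h\circ F=\lim_k F^{n_k+1}=F\circ h$, and hence with every limit of powers of $F$ as well.

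I would first establish the structural backbone. One shows that a limit map has locally constant rank on $\Omega$ — if the rank drops at a point it drops everywhere, using that $F$ is an automorphism and composing with further iterates — so that the image of a limit map is an immersed complex submanifold. Starting from $h_0$, taking a convergent subsequence of the iterates $h_0^{\circ m}$ and using the commutation relations, one obtains an idempotent limit map $\rho$ with $\rho\circ\rho=\rho$; its image $\Sigma:=\rho(\Omega)$ is a closed complex submanifold of $\Omega$, $\rho$ is a holomorphic retraction onto $\Sigma$, and $\rho|_\Sigma={\rm id}_\Sigma$. Since $F$ commutes with $\rho$ one gets $F(\Sigma)=\Sigma$, and writing $\rho=\lim_i F^{p_i}$ gives $(F|_\Sigma)^{p_i}\to{\rm id}_\Sigma$, which forces $F|_\Sigma$ to be an automorphism of $\Sigma$ with the closure of $\{(F|_\Sigma)^n\}$ a group of automorphisms. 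One then shows that every limit map $h$ has image in $\Sigma$; combined with $\rho|_\Sigma={\rm id}_\Sigma$ and $h\circ\rho=\rho\circ h$ this gives $h=\rho\circ h=h\circ\rho=(h|_\Sigma)\circ\rho=\eta\circ\rho$ with $\eta:=h|_\Sigma\in{\rm Aut}(\Sigma)$, and since every accumulation point of an orbit $(F^n(x))$ equals $h(x)\in\Sigma$ for some limit map $h$, every orbit converges to $\Sigma$. I expect this backbone — the constant-rank statement, the idempotent construction, and the exclusion of limit maps with image in $\ell^\infty$ — to be the main obstacle, and the place where the absence of a filtration (in contrast with the polynomial H\'enon case) forces one to argue intrinsically inside $\Omega$.

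The constant Jacobian then yields the trichotomy. Since $\det dF^n\equiv\delta^n$ on $\C^2$ and $\Sigma\subset\C^2$, on compact subsets of $\Omega$ the convergence $F^{p_i}\to\rho$ takes place in $\C^2$, so $\det d\rho=\lim_i\delta^{p_i}$; as the rank of $\rho$ equals $\dim\Sigma$, this gives $|\delta|\le1$ always, $\dim\Sigma=2$ exactly when $|\delta|=1$, and $\dim\Sigma\le1$ forcing $|\delta|<1$. If $\dim\Sigma=2$ then $\Sigma=\Omega$, $\rho={\rm id}$, and $F^{p_i}\to{\rm id}$ on $\Omega$. If $\dim\Sigma=0$ then $\Sigma=\{p\}$ is a fixed point, every limit map is the constant $p$, so $F^n\to p$ locally uniformly; then $dF^n_p=(dF_p)^n\to0$, so $p$ is attracting, $\Omega$ is its basin of attraction, and $\Omega$ is biholomorphic to $\C^2$ by the classical theorem of Rosay and Rudin on basins of attracting fixed points of automorphisms of $\C^2$.

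It remains to treat $\dim\Sigma=1$. Here $\Sigma$ is a closed $1$-dimensional submanifold of $\Omega\subset\C^2$, hence a non-compact Riemann surface, hence Stein, with $H^1(\Sigma,\mathcal O)=0$ and $H^2(\Sigma,\Z)=0$. Since $F|_\Sigma$ is an automorphism of $\Sigma$ whose powers accumulate at ${\rm id}_\Sigma$, either $F|_\Sigma$ has finite order $j$, so that $F^j|_\Sigma={\rm id}_\Sigma$, or $F|_\Sigma$ has infinite order with ${\rm id}_\Sigma$ in the closure of its iterates, which by uniformization forces $\Sigma$ to be biholomorphic to a circular (rotation-invariant) domain $A$ and $F|_\Sigma$ conjugate to a rotation $z\mapsto e^{i\theta}z$. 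In both cases $F$ (resp.\ $F^j$) respects the fibration $\rho\colon\Omega\to\Sigma$, and the transverse part of the dynamics is a uniform contraction of ratio $|\delta|$ (resp.\ $|\delta^j|$), so orbits approach $\Sigma$ geometrically along the fibers. Using this one constructs a holomorphic fiber coordinate $\psi\colon\Omega\to\C$ — roughly, by renormalizing the transverse component of $F^n$ by $\delta^{-n}$ (resp.\ $\delta^{-jn}$) and passing to the limit — so that $(\rho,\psi)\colon\Omega\to A\times\C$ (resp.\ $\Omega\to\Sigma\times\C$) is a biholomorphism conjugating $F$ to $(z,w)\mapsto(e^{i\theta}z,\frac{\delta}{e^{i\theta}}w)$, resp.\ $F^j$ to $(z,w)\mapsto(z,\delta^j w)$. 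Making this renormalization globally well defined requires the local triviality of the submersion $\rho$ together with $H^1(\Sigma,\mathcal O)=H^2(\Sigma,\Z)=0$; this gluing is the one point of the $\dim\Sigma=1$ case that goes beyond one-variable dynamics.
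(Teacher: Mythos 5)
Your outline follows the Bedford--Smillie/Lyubich--Peters template that the paper also uses, and the rank-$0$ and rank-$2$ cases, as well as the final line-bundle/fiber-coordinate argument in dimension $1$, track the paper closely. But the two load-bearing assertions that you use to shortcut the rank-$1$ case are exactly where the paper has to do its real work, and as stated they are gaps rather than steps.

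First, you assert that a limit map has locally constant rank on $\Omega$ (``if the rank drops at a point it drops everywhere''). This is not proved, and the paper's entire treatment of the rank-$1$ case is organized around the possibility that it fails: they fix a limit map $g$ of \emph{maximal} rank $1$ and explicitly allow isolated points of rank $0$, at which $\Sigma = g(\Omega)$ may have cusps. The commutation $h\circ F = F\circ h$ only shows that the rank-drop locus is $F$-invariant; it does not show it is empty. To get around this the paper builds the normalization $\widehat{\Sigma}$ of $\Sigma$ by gluing Puiseux disks, lifts $F$ to an automorphism $\widehat{F}$ of $\widehat{\Sigma}$, establishes that $\widehat{F}$ is either periodic or conjugate to an irrational rotation, and only then rules out cusps (and self-intersections) using $\widehat{F}$-invariance of the exceptional sets together with an invariant cone field / stable-manifold argument and a Rouch\'e count. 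None of that structure is recoverable from the bare statement that the rank is constant.

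Second, the idempotent construction presupposes that limit maps can be composed, i.e.\ that $h_0(\Omega)\subset\Omega$. Your justification, ``$h_0(\Omega)\subset\Omega$ because $\Omega$ is a Fatou component,'' is not an argument: a priori the image of a rank-$1$ limit map could meet $\partial\Omega$. In the paper, the conclusion $\Sigma\subset\Omega$ is a \emph{theorem} obtained at the end of the rank-$1$ analysis (``$\Sigma\setminus\gamma(\zeta_0)$ is contained in $\Omega$'' and the subsequent lemma killing the cusp point), again via normal hyperbolicity and stable manifolds filling a neighborhood of $\Sigma$. Only after that does one know that $g|_\Sigma$ is the identity and that $g$ is a retraction. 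Without independently establishing $\Sigma\subset\Omega$, the semigroup $\overline{\{F^n\}}$ is not closed under composition, the idempotent $\rho$ cannot be extracted, and the assertion that $\rho(\Omega)$ is a closed submanifold of $\Omega$ has no starting point. If you want to retain the idempotent route, you would need to first prove $\Sigma\subset\Omega$ and the smoothness of $\Sigma$ by some other means, at which point you have essentially reproduced the paper's rank-$1$ argument.
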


By a circular domain we mean either the disk, the punctured disk, an annulus, the complex plane or the punctured plane.
For the  polynomial H\'enon maps case, see \cite{BS1991} and  \cite{FSrecurrent}.


Let $(F^{n_k})$ be a convergent subsequence of iterates on $\Omega$, with $F^{n_k}(z) \rightarrow w\in \Omega$. We denote the limit of $(F^{n_k})$ by $g$.

\begin{lemma}
The image $g(\Omega)$ is contained in $\mathbb C^2.$
\end{lemma}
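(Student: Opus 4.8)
The plan is to argue by contradiction: suppose that $g(\Omega)$ is not contained in $\C^2$, i.e.\ there is a point $x \in \Omega$ with $g(x) \in \ell^\infty$. By the Hurwitz argument already used in the proof of Lemma \ref{projectiveimpliesintrinsic} (taking an affine chart around $g(x) \in \ell^\infty$ in which $\ell^\infty = \{z_1 = 0\}$ and applying Hurwitz to $\pi_1 \circ F^{n_k}$), the set $g^{-1}(\ell^\infty)$ is open and closed in $\Omega$, so connectedness of $\Omega$ forces $g(\Omega) \subset \ell^\infty$. In particular $g(z) \in \ell^\infty$, contradicting the hypothesis that the recurrent orbit $F^{n_k}(z) \to w \in \Omega \subset \C^2$. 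So the contradiction is immediate once the dichotomy ``$g(\Omega) \subset \C^2$ or $g(\Omega) \subset \ell^\infty$'' is in place.

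The point I would be careful about is that the statement of Lemma \ref{projectiveimpliesintrinsic} is phrased for an arbitrary $\P^n$-normal family, and here we have exactly such a family (the iterates $(F^{n_k})$ restricted to $\Omega$, which lies in the Fatou set and hence is $\P^2$-normal near each of its points), so the argument transfers verbatim. Concretely: $g \in \mathrm{Hol}(\Omega, \P^2)$ is the locally uniform limit of $(F^{n_k})$; the set $U := g^{-1}(\ell^\infty)$ is closed since $\ell^\infty$ is closed and $g$ is continuous; and $U$ is open because near any $y \in U$ one can choose homogeneous coordinates $[z_0 : z_1 : z_2]$ with $\ell^\infty = \{z_0 = 0\}$ — wait, one must instead choose the chart so that a coordinate hyperplane through $g(y)$ realizes $\ell^\infty$ locally — and then $\pi \circ F^{n_k} \to \pi \circ g$ for the appropriate coordinate projection $\pi$, with $\pi \circ g \equiv 0$ near $y$; Hurwitz then forces $\pi \circ F^{n_k}$ to vanish near $y$ for large $k$, but $F^{n_k}(\Omega) \subset \C^2$ misses $\ell^\infty$ entirely, a contradiction unless $\pi \circ g \equiv 0$ on a neighborhood, i.e.\ $U$ is open.

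Then the main (and only real) step is the observation that $w \in \Omega \subset \C^2$ rules out $g(\Omega) \subset \ell^\infty$, since $g(z) = \lim F^{n_k}(z) = w \in \C^2$. Hence $g(\Omega) \subset \C^2$, as claimed. I expect no genuine obstacle here — the essential content is already packaged in Lemma \ref{projectiveimpliesintrinsic}, and the recurrence hypothesis on $\Omega$ (through the existence of the point $w \in \Omega$ in the accumulation) supplies precisely the one extra ingredient needed to exclude the ``image at infinity'' alternative. The only thing to state cleanly is that $(F^{n_k})$ is not divergent on compact subsets of $\Omega$ (again because $F^{n_k}(z) \to w \in \Omega$), which is what lets us invoke the relevant half of the earlier lemma.
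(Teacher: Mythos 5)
Your proposal follows exactly the paper's argument: reduce to the dichotomy ``$g(\Omega)\subset\C^2$ or $g(\Omega)\subset\ell^\infty$'' via the Hurwitz argument from Lemma \ref{projectiveimpliesintrinsic}, then rule out the second alternative using $g(z)=w\in\Omega\subset\C^2$. The only cosmetic difference is that you unpack the openness-of-$g^{-1}(\ell^\infty)$ step in a little more detail than the paper, which simply cites the earlier lemma.
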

\begin{proof}
If there is a point $x\in \Omega$ for which $F(x)$ belongs to the line at infinity $\ell^\infty$, then $g(\Omega)\subset \ell^\infty$ (see e.g. the proof of Lemma \ref{projectiveimpliesintrinsic}), which gives a contradiction.
\end{proof}

\begin{defn}
We define the  \emph{maximal rank} of $g$ as   $\max_{z\in \Omega}{\rm rk}( d_zg)$.
\end{defn}
\subsection{Maximal rank 0}
\begin{lemma}
Suppose that $g$ has maximal rank $0$. Then $g(\Omega)$ is the single point $w$, which is an attracting fixed point.
\end{lemma}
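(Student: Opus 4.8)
The plan is to exploit the hypothesis that $g$ is a locally uniform limit of automorphisms $F^{n_k}$, so that $g$ is a non-constant-on-the-limit holomorphic map unless it is globally constant. First I would observe that if $g$ has maximal rank $0$ everywhere on the connected domain $\Omega$, then $d_z g \equiv 0$, hence $g$ is locally constant, and since $\Omega$ is connected $g$ is the single constant value $w$ (we already know $g(\Omega) \subset \C^2$ and that $F^{n_k}(z) \to w \in \Omega$ for the marked recurrent point $z$, so the constant is precisely $w$). Thus the content of the lemma is entirely about showing $w$ is an attracting fixed point of $F$.

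Next I would show $w$ is fixed. The idea is that $F^{n_k} \to w$ locally uniformly, and $\Omega$ is invariant, so $F^{n_k+1} = F \circ F^{n_k} \to F(w)$ locally uniformly as well; but also $F^{n_k+1} = F^{n_k} \circ F$, and $F(w) \in \Omega$ (as $w \in \Omega$ and $\Omega$ is invariant), so by continuity $F^{n_k}(F(w)) \to w$, giving $F^{n_k+1}(w) \to w$. Comparing the two limits of $F^{n_k+1}$ evaluated near $w$ yields $F(w) = w$. (Care is needed here: I would argue using uniform convergence on a fixed compact neighborhood of $w$ containing $F(w)$, which exists since both lie in $\Omega$.)

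Then I would show $w$ is attracting, i.e. the eigenvalues of $d_w F$ have modulus $< 1$. Since $F^{n_k} \to w$ uniformly on a neighborhood of $w$, the derivatives converge: $d_w F^{n_k} = (d_w F)^{n_k} \to 0$ by the Cauchy estimates / Weierstrass convergence theorem applied coordinatewise. If $d_w F$ had an eigenvalue $\lambda$ with $|\lambda| \geq 1$, then $(d_w F)^{n_k}$ could not converge to $0$ (its operator norm stays bounded below, e.g. because $\|(d_wF)^{n_k} v\| \geq |\lambda|^{n_k}\|v\|$ on a suitable $v$, or via the spectral radius). Hence both eigenvalues have modulus $<1$, so $w$ is an attracting fixed point.

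The main obstacle I anticipate is the bookkeeping in the "$w$ is fixed" step: one must be careful that $F(w)$ lies in $\Omega$ and that the uniform convergence $F^{n_k} \to w$ can be applied on a neighborhood large enough to contain both $w$ and $F(w)$, then pass to the limit in $F^{n_k}\circ F = F\circ F^{n_k}$ consistently. Everything else is a routine application of uniform convergence of holomorphic maps together with the Cauchy estimates for derivatives; no deep input beyond connectedness of $\Omega$ and the already-established fact $g(\Omega)\subset\C^2$ is needed.
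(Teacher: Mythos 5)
Your proof is correct and follows essentially the same route as the paper: constancy of $g$ from rank $0$ on a connected domain, $w$ fixed via the commutation $F\circ F^{n_k}=F^{n_k}\circ F$ passing to the limit (which is just $F\circ g=g\circ F$), and attractivity because an eigenvalue of modulus $\ge 1$ for $d_wF$ would force $\|(d_wF)^{n_k}\|\ge 1$, contradicting $d_wF^{n_k}\to d_wg=0$. You simply spell out the intermediate steps more explicitly than the paper does.
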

\begin{proof}
Since the maximal rank is $0$, the map $g$ is constant and must therefore equal $w$. Since $F$ and $g$ commute, the point $w$ must be fixed.
Suppose that the differential $d_wF$ has an eigenvalue of absolute value $\geq 1.$ Then the same is true for all iterates
$F^{n_k}$. Hence they cannot converge to a constant map. So $w$ must be an attracting fixed point.
\end{proof}

It follows that $\Omega$ is the attracting basin of the point $w$, and the entire sequence $F^n$ converges to $g$.
 In this case the limit manifold $\Sigma$ is the point $\{w\}$.

\subsection{Maximal rank 2}

\begin{thm}
Suppose that $g$ has maximal rank $2$. Then there exists a subsequence
$(m_k)$ so that $F^{m_k}\rightarrow \mbox{Id}$ on $\Omega.$
\end{thm}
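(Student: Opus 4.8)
The plan is to exploit the invertibility of the differential of $g$ at a point of maximal rank in order to produce an open subset of $\Omega$ contained in $g(\Omega)$, and then to ``reverse'' the dynamics there by means of equicontinuity.

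First I would fix a point $p\in\Omega$ with $d_pg$ invertible, which exists because $g$ has maximal rank $2$, and set $q:=g(p)$. By the preceding lemma $q\in\C^2$, and $q\in\overline\Omega$ since $q=\lim_k F^{n_k}(p)$ with $F^{n_k}(p)\in\Omega$. (Incidentally, since $\det d_xF^{n_k}=\delta^{n_k}$ for every $x$ and $d_pg=\lim_k d_pF^{n_k}$ is invertible, letting $k\to\infty$ forces $|\delta|=1$, in accordance with Theorem~\ref{thm:recurrent}.) By the inverse function theorem $g$ maps some neighborhood $N\subset\Omega$ of $p$ biholomorphically onto a neighborhood of $q$; I would then fix a ball $B=B(q,r)\subset g(N)$ and set $V_0:=B\cap\Omega$. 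Since $q\in\overline\Omega$, this $V_0$ is a non-empty open subset of $\Omega$, and $V_0\subset g(N)\subset g(\Omega)$.

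Next, writing $m_k:=n_{k+1}-n_k$, I would prove that $F^{m_k}\to\mathrm{Id}$ pointwise on $V_0$. Given $y\in V_0$, write $y=g(x)$ with $x\in N\subset\Omega$. Since $\Omega$ is invariant we have $F^{n_k}(x)\in\Omega$, and $F^{n_k}(x)\to g(x)=y$, so for $k$ large these points lie in a fixed compact neighborhood $K\subset\Omega$ of $y$. As $\Omega$ is contained in the Fatou set, the family of iterates is equicontinuous on $K$ for the Fubini--Study distance $d_{FS}$ --- this is the characterization of $\P^2$-normality recalled above --- and hence $d_{FS}(F^{m_k}(y),F^{m_k}(F^{n_k}(x)))\to 0$, because $d_{FS}(y,F^{n_k}(x))\to 0$. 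On the other hand $F^{m_k}(F^{n_k}(x))=F^{n_{k+1}}(x)\to g(x)=y$. Combining, $F^{m_k}(y)\to y$.

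Finally, by $\P^2$-normality of the iterates on $\Omega$ I would extract a subsequence with $F^{m_{k_j}}\to h\in\mathrm{Hol}(\Omega,\P^2)$ locally uniformly on $\Omega$. The previous paragraph gives $h=\mathrm{id}$ on the non-empty open set $V_0$, so the identity principle on the connected manifold $\Omega$ yields $h=\mathrm{id}$ on all of $\Omega$; that is, $F^{m_{k_j}}\to\mathrm{Id}$ on $\Omega$. I expect the main obstacle to be that one does not know a priori that $g(\Omega)\subset\Omega$, and that $q=g(p)$ might lie on $\partial\Omega$; this is exactly why the argument uses the open slice $V_0=B\cap\Omega$ instead of trying to invert $g$ or the automorphisms $F^{n_k}$, and why equicontinuity on $\Omega$ --- rather than convergence of inverse maps near $q$ --- is what transports the convergence $F^{n_{k+1}}(x)\to y$ into the convergence $F^{m_k}(y)\to y$.
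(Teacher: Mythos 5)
Your argument is correct, and it reaches the conclusion by a genuinely different route than the paper. The paper also applies the inverse function theorem to $g$ at a point of maximal rank to get a biholomorphism $g\colon U\to V'$, but then it exploits that each $F^{n_k}$ is itself an automorphism: for $k$ large one has $V\subset F^{n_k}(U)$ (which in particular shows $V\subset\Omega$), the inverses $(F^{n_k})^{-1}$ converge to $g^{-1}$ on $V$, and then $F^{m_k}=F^{n_{k+1}}\circ (F^{n_k})^{-1}\to g\circ g^{-1}=\mathrm{Id}$ on $V$, after which normality propagates this to all of $\Omega$. You instead never invert the iterates: you take $V_0=B\cap\Omega$ (trivially inside $\Omega$, nonempty because $q\in\overline\Omega$) and use the equicontinuity characterization of $\P^2$-normality to transfer $F^{n_{k+1}}(x)\to y$ into $F^{m_k}(y)\to y$, then extract a convergent subsequence and apply the identity principle. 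The paper's approach buys the extra fact that a full neighborhood $V$ of $g(x)$ lies in $\Omega$ and gives locally uniform convergence of $(F^{m_k})$ itself without further extraction; yours avoids the small degree-theoretic step ($V\subset F^{n_k}(U)$ for $k$ large) and the convergence of inverse branches, relying only on the Fatou-set equicontinuity, at the cost of an extra passage to a subsequence. Both are valid and of comparable length; your remark about the open set $V_0$ being the right substitute when one does not know a priori that $g(\Omega)\subset\Omega$ is exactly the point, and your parenthetical observation that maximal rank $2$ forces $|\delta|=1$ is also correct and consistent with the theorem's statement.
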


\begin{proof}
Let $x$ be a point of maximal rank 2. There exist an open  neighborhood $U$ of $x$  and an open  neighborhood $V'$ of $g(x)$ such that $g\colon U\to V'$ is a biholomorphism.  Denote $h:=g^{-1}$ defined on $V'$. Let $V\subset \subset V'$ be an open neighborhood of  $g(x)$.
Since $F^{n_k}\rightarrow g$ on $U$,   we have that $V\subset F^{n_k}(U)$ for large $k$ and the maps $(F^{n_k})^{-1}$ converge to $h$ uniformly on compact subsets of $V$. In particular $V\subset \Omega$.
We can then write $F^{n_{k+1}-n_k}=F^{n_{k+1}}\circ (F^{n_k})^{-1}$ on $V$. If we set $m_k:=n_{k+1}-n_k$, then $F^{m_k}\rightarrow \mathrm{Id}$ on $V$. Since we are in the Fatou set this implies that
$F^{m_k}\rightarrow \mathrm{Id}$ on $\Omega.$
\end{proof}

It follows that every point $p\in \Omega$ is recurrent and that $F$ is volume preserving. The following fact is trivial but we recall it for convenience.
\begin{lemma}\label{hurwitzlimit} Let $(G_n\colon\Omega\subset \C^2\to \C^2)$ be a sequence of injective holomorphic mappings which are volume preserving. If $G_n$ converges to $G$ uniformly on compact subsets, then $G$ is holomorphic, injective and volume preserving.
\end{lemma}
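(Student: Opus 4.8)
The plan is to verify the three asserted properties of $G$ — holomorphy, injectivity, volume preservation — separately; only injectivity requires an argument, the other two being routine passages to the limit.

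First, being a locally uniform limit of the holomorphic maps $G_n\colon\Omega\to\C^2$, the map $G$ is holomorphic. By the Cauchy integral formula the first order partial derivatives of $G_n$ converge to those of $G$ locally uniformly, so the holomorphic Jacobians converge locally uniformly, $\det dG_n\to\det dG$. Since each $G_n$ is volume preserving we have (for a holomorphic self-map of $\C^2$, whose real Jacobian is $|\det dG_n|^2$) that $|\det dG_n|\equiv 1$ on $\Omega$; letting $n\to\infty$ gives $|\det dG|\equiv 1$. This already shows $G$ is volume preserving, and in particular $dG$ is everywhere nondegenerate, so $G$ is an open map and a local biholomorphism.

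It remains to prove injectivity, which I would do by contradiction. Suppose $G(a)=G(b)=:p$ with $a\neq b$. Using the inverse function theorem at $a$ and at $b$ (valid since $dG$ is nondegenerate there), choose disjoint closed balls $\overline{B_a},\overline{B_b}\subset\Omega$ centered at $a$ and $b$ on which $G$ is injective; then $p\notin G(\partial B_a)\cup G(\partial B_b)$, so $\rho_0:=\min\bigl(\operatorname{dist}(p,G(\partial B_a)),\operatorname{dist}(p,G(\partial B_b))\bigr)>0$. Pick $n$ with $\sup_{\overline{B_a}\cup\overline{B_b}}\|G_n-G\|<\rho_0/2$. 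On $\partial B_a$ and on $\partial B_b$ the straight-line homotopy $(1-t)G+tG_n$ never takes the value $p$, for any $t\in[0,1]$; hence by homotopy invariance of the topological degree $\deg(G_n,B_a,p)=\deg(G,B_a,p)=1$ — the last equality because $G|_{B_a}$ is a biholomorphism onto a neighborhood of $p$, hence orientation preserving with a single preimage of $p$ — and similarly $\deg(G_n,B_b,p)=1$. Therefore there exist $x_n\in B_a$ and $y_n\in B_b$ with $G_n(x_n)=G_n(y_n)=p$, and $x_n\neq y_n$ since $B_a\cap B_b=\varnothing$, contradicting the injectivity of $G_n$. (Alternatively, one could invoke the several-variable version of Hurwitz's theorem directly: a locally uniform limit of injective holomorphic maps is either injective or everywhere degenerate, the latter being excluded by $|\det dG|\equiv 1$.) This last step is the only one carrying any content, and its single subtlety is the appeal to degree theory (equivalently, to Hurwitz in several variables); I would include the degree argument to keep the lemma self-contained.
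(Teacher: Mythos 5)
Your proof is correct and follows the same route as the paper's: holomorphy by locally uniform convergence, volume preservation via $dG_n \to dG$, and injectivity by Hurwitz (which the paper simply invokes, while you spell it out with a self-contained degree-theoretic argument using the nondegeneracy of $dG$ that volume preservation provides).
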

\begin{proof} The map $G$ is holomorphic and $dG_n\stackrel{n\to\infty}\longrightarrow dG$, and thus $G$ is volume preserving.  Thus by Hurwitz Theorem  $G$ is  injective.
\end{proof}

\begin{prop}
If $g$ has maximal rank $2$ then each orbit $(F^n(z))$ is contained in a compact subset of $\Omega$.
\end{prop}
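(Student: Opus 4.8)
The plan is to argue by contradiction. Suppose the forward orbit $(F^n(z))_{n\ge 0}$ of some $z\in\Omega$ is not contained in any compact subset of $\Omega$. Since $\Omega$ lies in the $\P^2$-Fatou set, the family $(F^n)$ is $\P^2$-normal on $\Omega$, so there is a subsequence with $F^{l_j}\to h$ in ${\rm Hol}(\Omega,\P^2)$, $l_j\to\infty$, and $F^{l_j}(z)\to p:=h(z)$, where $p$ lies in the closure of the orbit in $\P^2$ but not in $\Omega$, so $p\in\partial\Omega$. By the Hurwitz argument in the proof of Lemma~\ref{projectiveimpliesintrinsic}, either $h(\Omega)\subset\ell^\infty$ or $h(\Omega)\subset\C^2$. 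I treat the second case first, as it is the heart of the matter.

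Assume $h(\Omega)\subset\C^2$. By the preceding theorem $F^{m_k}\to\mathrm{Id}$ on $\Omega$, hence $\delta^{m_k}=\det dF^{m_k}\to 1$, so $|\delta|=1$ and every iterate is volume preserving. Then the injective volume-preserving maps $F^{l_j}$ converge uniformly on compacts to $h$ with values in $\C^2$, so by Lemma~\ref{hurwitzlimit} $h$ is injective and volume preserving; thus $h\colon\Omega\to h(\Omega)$ is a biholomorphism onto an open subset of $\C^2$. I claim $h(\Omega)=\Omega$. First, the "opening up" argument (as in the proof of the preceding theorem) gives $F^{-l_j}\to h^{-1}$ uniformly on compact subsets of $h(\Omega)$: for $K\subset\subset h(\Omega)$ and $j$ large, $K\subset F^{l_j}(\Omega)$ and $F^{-l_j}(K)$ stays in a fixed compact subset of $\Omega$. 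Second, passing to limits in $F\circ F^{l_j}=F^{l_j}\circ F$ shows $F\circ h=h\circ F$ on $\Omega$, so $h(\Omega)$ and its complement in $\C^2$ are $F$-invariant. If there were $x\in\Omega\setminus h(\Omega)$, then $F^{l_j}(x)\in\C^2\setminus h(\Omega)$ for all $j$, while $F^{l_j}(x)\to h(x)\in h(\Omega)$, contradicting that $h(\Omega)$ is open. Hence $h(\Omega)=\Omega$, so $h\in\Aut(\Omega)$; in particular $p=h(z)\in\Omega$, contradicting $p\in\partial\Omega$.

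It remains to rule out $h(\Omega)\subset\ell^\infty$. First one checks that $(F^{-n})_{n\ge 0}$ is also $\P^2$-normal on $\Omega$: write $F^{-n}=F^{m_k-n}\circ F^{-m_k}$ with $m_k>n$; applying the biholomorphism argument above to the limit $\mathrm{Id}$ gives $F^{-m_k}\to\mathrm{Id}$ on $\Omega$, and combined with the $\P^2$-normality of $(F^n)$ one extracts from any subsequence of $(F^{-n})$ a convergent one. Passing to a subsequence we may thus assume $F^{-l_j}\to\tilde h$ on $\Omega$. If $\tilde h(\Omega)\subset\C^2$, the previous paragraph gives $\tilde h\in\Aut(\Omega)$, and then for $x\in\Omega$ the identity $F^{l_j}(F^{-l_j}(x))=x$, together with $F^{-l_j}(x)\to\tilde h(x)\in\Omega$ and $F^{l_j}\to h$ uniformly near $\tilde h(x)$, forces $h(\tilde h(x))=x\in\C^2$, contradicting $h(\Omega)\subset\ell^\infty$. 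Hence $\tilde h(\Omega)\subset\ell^\infty$ as well; but then $F^{l_j}$ and $F^{-l_j}$ both push a fixed small ball $\overline B\subset\Omega$ off to $\ell^\infty$, which is excluded by combining volume preservation with the fact (recorded after the preceding theorem) that every point of $\Omega$ is recurrent. This closes the argument.

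I expect the identification $h(\Omega)=\Omega$ to be the main point: Lemma~\ref{hurwitzlimit} is precisely what makes $h(\Omega)$ open, and the $F$-invariance of $h(\Omega)$ then forces surjectivity; without injectivity of $h$ one only gets $h(\Omega)\subset\overline\Omega$, which is insufficient. The exclusion of the alternative $h(\Omega)\subset\ell^\infty$ is the other delicate point, and is where the normality of the backward iterates (itself a consequence of $F^{m_k}\to\mathrm{Id}$) together with volume preservation and recurrence of all points of $\Omega$ are needed.
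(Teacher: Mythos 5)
Your reduction to $\P^2$-limit maps is a genuinely different route from the paper's, and the main case is handled correctly: when $h(\Omega)\subset\C^2$, using Lemma~\ref{hurwitzlimit} to make $h$ an injective volume-preserving limit, showing $h(\Omega)$ is a fully $F$-invariant open subset, and then arguing on the one hand $K\subset F^{l_j}(\Omega)=\Omega$ gives $h(\Omega)\subset\Omega$ and on the other hand no $x\in\Omega\setminus h(\Omega)$ can exist, so $h\in\Aut(\Omega)$ and $p=h(z)\in\Omega$, contradicting $p\in\partial\Omega$. The sub-case of $h(\Omega)\subset\ell^\infty$ in which the backward limit $\tilde h$ lands in $\C^2$ is also dispatched correctly via $h(\tilde h(x))=x$. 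Where the paper instead justifies a ``we may assume'' step via a shorter Hurwitz argument, your version of this step is cleaner and more explicit.

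However, the remaining sub-case, where both $h(\Omega)\subset\ell^\infty$ and $\tilde h(\Omega)\subset\ell^\infty$, is not actually handled. You assert it is ``excluded by combining volume preservation with the fact that every point of $\Omega$ is recurrent,'' but no argument is supplied, and the two facts by themselves do not yield a contradiction: the sets $F^{l_j}(\overline B)$ all have the same Euclidean volume and can still converge in the Hausdorff sense to a compact subset of $\ell^\infty$ (neighbourhoods of $\ell^\infty$ in $\P^2$ have infinite Euclidean volume), while recurrence only controls a separate subsequence $(m_k)$. The real obstruction one must rule out is that the orbit could jump from near $z$ directly out towards $\ell^\infty$ and back without ever passing through a fixed compact region of bounded Euclidean volume. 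This is exactly what the paper's proof is engineered to prevent: the exhaustion $(K_j)$ is chosen so that $F(K_j)\subset\subset K_{j+1}$, which forces the orbit, each time it travels from $K_1$ to the complement of $K_{n_j}$ and back, to pass through the fixed annulus $K_4\setminus K_3$; the images $F^{k_j}(B(p,r))$ at these passage times are then shown to be pairwise disjoint, all contained in a bounded region near $K_4\setminus K_3$, and this contradicts volume preservation because that region has finite volume. Some ingredient of this kind is still needed to close your last sub-case, so as written the proof is incomplete at precisely the point you flag as delicate.
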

\begin{proof}
Let $(K_j)$ be an exhaustion of $\Omega$ by compact subsets such that   $K_j \subset \stackrel{\circ} K_{j+1} $ for all $j\in \N$.
By passing to a subsequence of the exhaustion if needed, we may assume that $F(K_j) \subset \subset K_{j+1}$. Let $p\in \Omega$. We can assume that $p\in K_1$, and  let  $r>0$ be such that $B(p,r)\subset K_1$.

We may assume that if $F^n(p) \in K_j$ then $F^n(B(p,r)) \subset K_{j+1}$.
Indeed, suppose by contradiction that there exist $j\in \N$ and subsequence $\ell_k$ such that $F^{\ell_k}(p)\in K_j$ for all $k\in \N$, but
\begin{equation}\label{hopecontradicts}
F^{\ell_k}(B(p,r))\not\subset   K_{j+k}.
\end{equation}
Then up to passing to a subsequence, $F^{\ell_k}(p)$ converges uniformly on $B(p,r)$ to a holomorphic map $h$ such that
$h(B(p,r))$ is open and does not intersect the line at infinity $\ell^\infty$. Thus $h(B(p,r))\subset \subset \Omega$, which contradicts (\ref{hopecontradicts}).
Similarly we may assume that if $F^n(p) \notin K_j$ then $F^n(B(p,r)) \cap K_{j-1} = \varnothing$.

Suppose by contradiction that the orbit of $p$ is not contained  in a compact subset of $\Omega$. Then the orbit of $p$ is not contained in any  $K_j$. But since $p$ is a recurrent point, the orbit of $p$ must also return to $K_1$ infinitely often. Thus, there exists a sequence $k_1 < l_1 < m_1 < k_2 < l_2 < m_2 < \ldots$ and a strictly increasing sequence $(n_j)$, $n_j\geq 3$, such that
\begin{enumerate}
\item[(i)] Each $F^{k_j}(p)$ lies in $K_4 \setminus K_3$
\item[(ii)] For $k_j < n < l_j$ the points $F^n(p)$ lie outside of $K_3$.
\item[(iii)] Each $F^{l_j}(p)$ lies outside $K_{n_j}$,
\item[(iv)] Each point $F^{m_j}(p)$ lies in $K_1$,
\item[(v)] For $k_j < n < m_j$ the points $F^n(p)$ lie in $K_{n_{j+1}-2}$.
\end{enumerate}
We claim that the sets $F^{k_j}(B(p,r))$ must be pairwise disjoint. To see this, suppose that $F^{k_i}(B(p,r)) \cap F^{k_j}(B(p,r)) \neq \varnothing$ for some $i < j$. Then clearly $$F^{k_i+n}(B(p,r)) \cap F^{k_j+n}(B(p,r)) \neq \varnothing,\quad \forall n \in \mathbb N.$$
If $l_j -k_j> m_i-k_i$, then a contradiction is obtained since $F^{k_j+m_i - k_i}(B(p,r)) \cap K_{2} = \varnothing$ due to (ii) while $F^{m_i}(B(p,r)) \subset K_2$ due to (iv).
If $l_j-k_j < m_i-k_i$, then a contradiction is obtained since $F^{l_j}(B(p,r)) \cap K_{n_{j}-1} = \varnothing$ due to (iii), while $F^{k_i + l_j - k_j}(B(p,r)) \subset K_{n_{i+1} -1}\subset K_{{n_j}-1}$ due to (v). Finally, if $l_j -k_j= m_i-k_i$,  then $F^{m_i}(B(p,r))\cap F^{\ell_j}(B(p,r))\neq \varnothing$, which contradicts (iii) and (iv).
This proves the claim.
Since $F$ is volume preserving and the  volume of $K_4 \setminus K_3$ is finite, we have a contradiction.

\end{proof}

\begin{corollary}
The limit of any convergent subsequence $(F^{n_k})$ is an automorphism of $\Omega$.
\end{corollary}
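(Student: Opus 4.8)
The plan is to show that $G := \lim_{k} F^{n_k}$, restricted to $\Omega$, is a biholomorphism of $\Omega$ onto itself. I would first record that $G$ takes its values in $\Omega$: for $z\in\Omega$ the point $G(z)=\lim_k F^{n_k}(z)$ lies in the closure of the forward orbit of $z$, which by the preceding proposition is a compact subset of $\Omega$; hence $G\in\mathrm{Hol}(\Omega,\Omega)$, and the same reasoning shows that the limit of \emph{any} uniformly convergent subsequence of forward iterates is a holomorphic self-map of $\Omega$. By Lemma \ref{hurwitzlimit} the map $G$ is moreover injective and volume preserving, so the real issue is surjectivity, i.e.\ the construction of a holomorphic inverse.

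For this I would use the subsequence $(m_k)$ with $F^{m_k}\to\mathrm{Id}$ on $\Omega$ obtained above, together with the fact that iterates of $F$ commute. After passing to a subsequence of $(m_k)$ and relabelling I may assume $m_k>n_k$ for every $k$; setting $q_k:=m_k-n_k\geq 1$ we have
$$
F^{m_k}=F^{n_k}\circ F^{q_k}=F^{q_k}\circ F^{n_k}.
$$
Since $(F^n)$ is a normal family on the Fatou component $\Omega$, after passing to a further subsequence I may assume $F^{q_k}\to P$ uniformly on compact subsets of $\Omega$, and by the observation above $P\in\mathrm{Hol}(\Omega,\Omega)$; along this subsequence $F^{n_k}\to G$ still. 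Now I would let $k\to\infty$ in the two displayed identities. The only point requiring care is that the limit of the composition equals the composition of the limits: given a compact $K\subset\Omega$, the images $F^{n_k}(K)$ eventually lie in a fixed compact neighbourhood $K'\subset\Omega$ of $G(K)$, on which $F^{q_k}\to P$ uniformly, and then uniform continuity of $P$ on $K'$ yields $F^{q_k}\circ F^{n_k}\to P\circ G$ uniformly on $K$; symmetrically $F^{n_k}\circ F^{q_k}\to G\circ P$. Passing to the limit gives $P\circ G=\mathrm{Id}_\Omega$ and $G\circ P=\mathrm{Id}_\Omega$, so $G$ is a bijection of $\Omega$ with holomorphic inverse $P$, i.e.\ an automorphism of $\Omega$.

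I expect the surjectivity to be the main obstacle: holomorphy and injectivity of $G$ are automatic, but there is no a priori control of the backward dynamics, and the device that supplies it is the commutation trick $F^{m_k}=F^{n_k}\circ F^{q_k}=F^{q_k}\circ F^{n_k}$, which converts the earlier fact $F^{m_k}\to\mathrm{Id}$ into a genuine two-sided holomorphic inverse for $G$. Everything else — that all the relevant limits exist and land inside $\Omega$ — is furnished by normality on the Fatou set together with the proposition that every orbit stays in a compact subset of $\Omega$.
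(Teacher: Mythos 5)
Your proof is correct and supplies exactly the argument the paper leaves implicit: the image lies in $\Omega$ by the preceding proposition on compactness of orbits, injectivity and volume preservation follow from Lemma \ref{hurwitzlimit}, and surjectivity comes from factoring $F^{m_k}=F^{n_k}\circ F^{q_k}=F^{q_k}\circ F^{n_k}$, passing to a convergent subsequence $F^{q_k}\to P$, and using $F^{m_k}\to\mathrm{Id}$ to obtain $P\circ G=G\circ P=\mathrm{Id}_\Omega$. The care you take with the limit of compositions (landing $F^{n_k}(K)$ in a fixed compact $K'\subset\Omega$ before invoking uniform convergence there) is precisely the point that makes the commutation trick rigorous.
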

In this case the limit manifold $\Sigma$ is the whole $\Omega$.
\begin{remark}\label{independentrank}
It follows that the maximal  rank of a limit map is independent of the chosen convergent subsequence.
\end{remark}

\subsection{Maximal rank 1}\label{rankone} We now consider the case where the limit map $g$ has maximal rank $1$. By Remark \ref{independentrank}  every other limit of a convergent subsequence on $\Omega$ must also have maximal rank $1$.

Recall that  $(F^{n_k})$ is a convergent subsequence of iterates on $\Omega$ such that $F^{n_k}(z) \rightarrow w\in \Omega$. Replace $n_k$ by a  subsequence so that $n_{k+1}-n_k \nearrow \infty.$ Let $(m_k)$ be a subsequence of $(n_{k+1}-n_k)$ so that $F^{m_k}$ converges, uniformly on compact subsets of $\Omega$. From now on we assume that $g$ is the limit of the sequence $(m_k)$.
\begin{remark}\label{needrecurrent}
Notice that   $g(w)=w$.
Actually, if follows by the construction that there exists an open neighborhood $N$ of $z$ in $\Omega$ such that $g(N)\subset \Omega$ and for all $y\in g(N)$, $g(y)=y$.
\end{remark}

\begin{lemma}
The map $F$ is strictly volume decreasing.
\end{lemma}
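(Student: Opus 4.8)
The plan is to combine the constancy of the Jacobian with the degeneracy of the limit map $g$. Since $F$ is an automorphism of $\C^2$ with constant Jacobian $\delta$, the chain rule gives $\det d_x(F^n)=\delta^n$ for every $x\in\C^2$ and every $n\in\N$; in particular $\delta\neq 0$ because $F$ is invertible.

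Next I would record that $\det dg\equiv 0$ on $\Omega$. Indeed, we are in the case where the maximal rank of $g$ equals $1$, i.e. $\mathrm{rk}(d_xg)\le 1$ for every $x\in\Omega$ (here $g$ takes values in $\C^2$ by the lemma asserting $g(\Omega)\subset\C^2$), so the $2\times 2$ complex matrix $d_xg$ is singular and its determinant vanishes at every point of $\Omega$.

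Finally, since $F^{m_k}\to g$ uniformly on compact subsets of $\Omega$, the Cauchy estimates yield $d(F^{m_k})\to dg$ uniformly on compact subsets, hence $\delta^{m_k}=\det d(F^{m_k})\to\det dg\equiv 0$. As $m_k\to\infty$ (the $m_k$ were extracted from the differences $n_{k+1}-n_k\nearrow\infty$), this forces $|\delta|<1$. Since the real Jacobian determinant of $F$ at each point equals $|\delta|^2<1$, the map $F$ strictly decreases Euclidean volume, which is the assertion. There is essentially no obstacle here; the only points requiring care are that locally uniform convergence of holomorphic maps forces convergence of the differentials (Weierstrass), and that ``maximal rank $1$'' is being used in the sense that the rank is $\le 1$ everywhere, so that $\det dg$ vanishes identically rather than merely at a single point.
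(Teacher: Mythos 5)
Your proof is correct and uses the same essential mechanism as the paper: the rank-$1$ limit map $g$ has identically vanishing Jacobian determinant, and since $\det d(F^{m_k})=\delta^{m_k}\to\det dg\equiv 0$ with $m_k\to\infty$, one must have $|\delta|<1$. The paper phrases this slightly differently (it first notes $|\delta|\le 1$ from recurrence and then rules out $|\delta|=1$ via the rank-$1$ limit), but the core reasoning is identical and your version is, if anything, a little more streamlined.
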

\begin{proof}
By assumption the Jacobian determinant $\delta$ of $F$ is constant. Since $\Omega$ is recurrent we have $|\delta| \le 1$. Since $(F^{n_k})$ converges to the map $g: \Omega \rightarrow \mathbb C^2$ of maximal rank $1$, it follows that $|\delta| < 1$.
\end{proof}

We write $\Sigma:=g(\Omega)$. Notice that $\Sigma$ is a subset of $\overline{\Omega}\cap\mathbb C^2$, and that, since $F$ and $g$ commute, the map $F|_\Sigma\colon \Sigma\to \Sigma$ is bijective. We need a Lemma.

\begin{lemma}\label{isolatedpoint}
Let $U$ be an open set in $\C^2$, and let $h\colon U\to \C^2$ be a holomorphic map of maximal rank $1$.
Then for all $w\in h(U)$,  the fiber $h^{-1}(w)$ has no isolated points.
\end{lemma}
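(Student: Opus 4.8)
The plan is to argue by contradiction, playing off two opposing facts: a holomorphic map is locally surjective near an isolated point of a fiber, whereas a holomorphic map of rank at most $1$ has image of measure zero.

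Suppose that for some $w\in h(U)$ the fiber $h^{-1}(w)$ had an isolated point $p$; after a translation in source and target we may assume $p=0$ and $w=0$, and the rank hypothesis is unaffected. Since $0$ is isolated in the analytic set $h^{-1}(0)$ we choose $r>0$ with $\overline{B(0,r)}\subset U$ and $h^{-1}(0)\cap\overline{B(0,r)}=\{0\}$. Then $\varepsilon:=\min_{|z|=r}|h(z)|>0$, and for every $w'$ with $|w'|<\varepsilon$ the maps $h$ and $h-w'$ are nowhere vanishing on the sphere $\{|z|=r\}$ and homotopic there through such maps. Since the local mapping degree of the holomorphic germ $h$ at its isolated zero $0$ is a well-defined positive integer — it equals the intersection multiplicity $\dim_{\C}\mathcal{O}_{\C^2,0}/(h_1,h_2)$, which is finite (isolated zero) and at least $1$ (the ideal $(h_1,h_2)$ is proper) — invariance of the degree forces $h-w'$ to vanish somewhere in $B(0,r)$. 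Hence $B(0,\varepsilon)\subset h(B(0,r))\subset h(U)$, so $h(U)$ has non-empty interior.

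On the other hand, $h$ having maximal rank $1$ means in particular that $\mathrm{rk}(d_z h)\le 1$ for every $z\in U$. The real differential of a holomorphic map is $\C$-linear, so its real rank is twice its complex rank, hence everywhere at most $2<4$; viewing $h$ as a $C^\infty$ map between open subsets of $\R^4$, every point of $U$ is therefore a critical point. By Sard's theorem the set of critical values, which is all of $h(U)$, has Lebesgue measure zero in $\C^2$, so $h(U)$ has empty interior. This contradicts the previous paragraph, and the lemma follows.

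The only input that is not completely elementary is the positivity of the local mapping degree of a finite holomorphic germ $(\C^2,0)\to(\C^2,0)$ — equivalently, the open mapping property at an isolated point of a fiber — which is classical and can, if preferred, be replaced by an application of the several-variable Rouch\'e theorem on the sphere $\{|z|=r\}$. Everything else reduces to Sard's theorem plus the linear-algebra remark that a $\C$-linear endomorphism of $\C^2$ of complex rank $\le 1$ is never surjective over $\R$. A more hands-on alternative that avoids Sard is to split on $\mathrm{rk}(d_0 h)$: if it equals $1$ then the rank is locally constant equal to $1$ and the constant-rank theorem presents $h^{-1}(0)$ as a smooth curve through $0$, so $0$ is not isolated; the remaining case $d_0 h=0$ is handled by the degree argument above. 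I expect the degree/Sard dichotomy to be the cleanest, so that is the route I would write up.
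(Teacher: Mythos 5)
Your proof is correct, and it takes a genuinely different route from the paper's. The paper also argues by contradiction and also uses the openness phenomenon at an isolated fiber point, but it packages it differently: choosing $\epsilon,\delta$ so that $h\colon V:=h^{-1}(B(w,\delta))\cap B(q,\epsilon)\to B(w,\delta)$ becomes proper, then picking a point $\zeta\in V$ of rank exactly $1$ (one exists since the maximal rank is $1$ and $V$ meets the component of $U$ where $h$ is nonconstant), noting via the constant-rank theorem that the fiber $h^{-1}(h(\zeta))$ contains a closed analytic curve in $V$, and observing that such a curve cannot be relatively compact, contradicting properness. You instead let the two facts collide globally: the positivity of the local degree at an isolated zero of a finite germ $(\C^2,0)\to(\C^2,0)$ forces $h(U)$ to have nonempty interior, while Sard's theorem (every point being critical, since real rank $\le 2<4$) forces $h(U)$ to have measure zero. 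Both proofs are sound; yours isolates the two incompatible conclusions about $h(U)$ more symmetrically and leans on classical machinery (local degree/intersection multiplicity, Sard), whereas the paper's argument stays inside the proper-map/analytic-set toolkit and directly exhibits the one-dimensional fiber structure that it then exploits in the rest of Section~\ref{rankone}. Your ``more hands-on alternative'' at the end (splitting on $\mathrm{rk}(d_0 h)$) is also valid but, as you note, the degree argument already handles both cases uniformly.
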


\begin{proof}
Assume by contradiction that $q\in h^{-1}(w)$ is  isolated. If $\epsilon$ is small enough, then $h(\partial B(q,\epsilon))$
is disjoint from $w.$ Hence there exists a small ball
$B(w,\delta)$ which is disjoint from $h(\partial B(q,\epsilon))$ Hence if we restrict
$h$ to $V:=h^{-1}(B(w,\delta))\cap B(q,\epsilon)$, then $h:V\rightarrow B(w,\delta)$ is a proper holomorphic map.
Let $\zeta\in V$ be such that ${\rm rk}_\zeta h=1$. Then the level set $h^{-1}(h(\zeta))$ contains a closed analytic curve in $V$. Such curve is not relatively compact in $V$, and this contradicts properness.
\end{proof}

It is not clear a priori that $\Sigma$ is a complex submanifold, but we will show, following a classical normalization procedure, that there exists a smooth Riemann surface $\widehat\Sigma$ such that the self-map $F$ on $\Sigma$ can be lifted to a holomorphic automorphism $\widehat F$ on $\widehat \Sigma$.  Note that such normalization  procedure  was used in a similar context in \cite{weickert}.

\begin{lemma}\label{injectivedisk}
For each point $z \in \Omega$ there is an open connected neighborhood $U(z)\subset \Omega$, an affine disk $\Delta_z\subset \Omega$ through $z$ and an injective holomorphic mapping $\gamma_z\colon \Delta_z\to \C^2$ such that
\begin{enumerate}
\item $\gamma_z(\Delta_z)$ is an irreducible local complex analytic curve which is smooth except possibly at $\gamma_z(0)$ where it could have a  cusp singularity,
\item $\gamma_z(\Delta_z)= g(U(z))$.
\end{enumerate}
Moreover, if $g$ has rank $1$ at $z$, then $\gamma_z(\Delta_z)$ is smooth and  $\gamma_z=g|_{\Delta_z}$.
\end{lemma}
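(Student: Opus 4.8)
The plan is to analyze the local structure of the rank-one holomorphic map $g$ near an arbitrary point $z\in\Omega$. I first treat the easy case: if $d_zg$ has rank $1$, then by the rank theorem there is an affine disk $\Delta_z$ through $z$ transverse to the kernel of $d_zg$ on which $g$ is an immersion, and after shrinking $\Delta_z$ the restriction $g|_{\Delta_z}$ is an injective immersion whose image is a smooth local analytic curve; moreover, since the fibers of $g$ have no isolated points by Lemma \ref{isolatedpoint} and the fiber through $z$ is locally a smooth curve through $z$ (being a level set of a submersion in suitable coordinates), a small neighborhood $U(z)$ is foliated by these fibers, so $g(U(z))$ coincides with $g(\Delta_z)$. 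This gives all the assertions, including the final ``moreover'' clause, with $\gamma_z=g|_{\Delta_z}$.

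The substantive case is when $d_zg=0$ but $g$ is not constant near $z$ (it has rank $1$ somewhere in $\Omega$, hence in any neighborhood of a maximal-rank point, but possibly $z$ itself is a critical point). Here I would choose linear coordinates $(u,v)$ centered at $w=g(z)$ so that the component $g_1$ of $g=(g_1,g_2)$ is not identically zero near $z$ and, after a further linear change in the source centered at $z$, so that $g_1$ does not vanish identically on any coordinate axis; then consider the curve $\{g_1=0\}$. The key point is to show $g(U(z))$ is a $1$-dimensional local analytic set. For this I would argue that the fibers of $g$ are, by Lemma \ref{isolatedpoint}, purely $1$-dimensional analytic curves with no isolated points, so near $z$ the map $g$ has generic fiber dimension $1$; by the constant-rank-off-a-thin-set behaviour of holomorphic maps together with properness on a small polydisk (exactly as in the proof of Lemma \ref{isolatedpoint}), the image $g(U(z))$ is a $1$-dimensional analytic subset of a neighborhood of $w$, irreducible after shrinking. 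Then I invoke the normalization / local parametrization theorem for one-dimensional analytic germs: an irreducible germ of an analytic curve admits a holomorphic parametrization $\gamma\colon(\D,0)\to\C^2$ which is injective, is a biholomorphism onto its image away from $0$, and whose image is smooth except possibly for a cusp-type singularity at $\gamma(0)$ (Puiseux expansion). Taking $\Delta_z$ to be an affine disk through $z$ mapped by $g$ onto this curve — e.g. a generic affine disk through $z$ — and setting $\gamma_z$ equal to this Puiseux parametrization (reparametrized so that $\gamma_z=g|_{\Delta_z}$ whenever that restriction is already injective) yields (1) and (2).

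I expect the main obstacle to be item (1) — controlling the singularity of $g(U(z))$ and justifying that the \emph{only} possible singularity is a cusp at the single point $\gamma_z(0)$. This requires: (a) knowing the image germ is irreducible (shrink $U(z)$ and use that $g$ restricted to a connected neighborhood of $z$ has connected, hence irreducible, image as an analytic set once fibers are pure one-dimensional); and (b) applying the local parametrization of plane analytic curves to see that an irreducible curve germ in $\C^2$ is the image of an injective holomorphic disk, smooth except at the distinguished point, where the Puiseux pair $(m,n)$ with $m<n$, $\gcd$ possibly nontrivial, forces at worst a cusp — here one must be slightly careful that ``cusp singularity'' in the statement is meant in the generous sense of any one-branch planar singularity, which is exactly what Puiseux gives. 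The matching of $\gamma_z$ with $g|_{\Delta_z}$ in the rank-one case is then a routine consequence of the uniqueness of parametrization of a smooth curve germ up to reparametrization. Throughout, the properness argument of Lemma \ref{isolatedpoint} is reused to pass from ``rank $1$ somewhere'' to ``generic fiber is a curve'', which is what makes the image one-dimensional and keeps everything local and analytic.
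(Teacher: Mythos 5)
Your outline of the rank-one case and the endgame (Puiseux parametrization of an irreducible plane curve germ) match the paper, but your handling of the key step in the rank-zero case has a genuine gap. You want to conclude that $g(U(z))$ is a one-dimensional analytic set by arguing ``generic fiber dimension is $1$, plus properness on a small polydisk as in Lemma~\ref{isolatedpoint}.'' That properness device cannot be transplanted here: in Lemma~\ref{isolatedpoint} one shrinks around an \emph{isolated} fiber point $q$ to make $h\colon V\to B(w,\delta)$ proper, but when $d_zg=0$ the fiber $g^{-1}(g(z))$ through $z$ is one-dimensional, so no shrinking of a polydisk around $z$ produces a proper map onto a neighbourhood of $w$. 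Without properness (or some substitute like Remmert's proper mapping theorem, which would also require properness), there is no general reason for the image of a two-to-two holomorphic map with positive-dimensional fibers to be analytic, so the assertion ``$g(U(z))$ is a $1$-dimensional analytic subset'' is not justified as stated.

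The paper's argument inverts the logic to avoid this. It first restricts $g$ to an affine disk $\Delta_z$ on which $g$ is nonconstant; the Puiseux normal form of the one-variable map $g|_{\Delta_z}\colon\Delta_z\to\C^2$ immediately produces an irreducible local analytic curve $g(\Delta_z)=\{G=0\}$ for some holomorphic $G$ near $g(z)$, with at worst a cusp at $g(z)$, plus an injective parametrization $\gamma_z$. The nontrivial step is then the \emph{equality} $g(U(z))=g(\Delta_z)$, proved by showing $G\circ g\equiv 0$ on $U(z)$: if not, $(G\circ g)^{-1}(0)$ would be a proper analytic curve in $U(z)$ containing $\Delta_z$, and at a point $q\in\Delta_z$ where this zero set equals $\Delta_z$ locally, $g^{-1}(g(q))$ would be isolated at $q$ (since $g|_{\Delta_z}$ is nonconstant), contradicting Lemma~\ref{isolatedpoint}. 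So Lemma~\ref{isolatedpoint} is used \emph{negatively}, to rule out the alternative, rather than \emph{positively}, to assert analyticity of the image. To repair your plan you should replace the ``image is analytic'' step with this restrict-to-a-disk-then-compare argument, or supply an honest properness/Remmert-type argument that actually handles the one-dimensional fiber through $z$.
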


\begin{proof}
If $g$ has rank 1 at $z$, the result follows immediately from the constant rank Theorem. So suppose that $g$ has rank $0$ at $z$, and let $\Delta_z\subset \Omega$ be an affine disk through $z$ on which $g$ is not constant.
By the Puiseux expansion of $g: \Delta_z \rightarrow \mathbb C^2$, it follows that, up to taking a smaller $\Delta_z$, $g(\Delta_z)$ is an irreducible local complex analytic curve (with possibly an isolated cusp singularity at $g(z)$). Hence  $g(\Delta_z)$   is the zero set of a holomorphic function $G$ defined in a open neighborhood $V$ of $g(z)$.
Let  $U(z)$ be the connected component of $g^{-1}(V)$ containing $z$.
We claim that $G\circ g$ vanishes identically on $U(z)$, which implies that $g(U(z))=g(\Delta_z)$. If not, then $(G\circ g)^{-1}(0)$ is a closed complex analytic curve in $U(z)$ containing $\Delta_z$. Pick a point $q\in \Delta_z$ where  locally  $(G\circ g)^{-1}(0)=\Delta_z$. Then $g^{-1}(g(q))$ is isolated at $q$ since $g$ is not constant on $\Delta_z$, which gives a contradiction by Lemma \ref{isolatedpoint}.

Finally, again by  the Puiseux expansion of $g: \Delta_z \rightarrow \mathbb C^2$, there exists a holomorphic injective map $\gamma_z\colon \Delta_z\to \C^2$ such that
$\gamma_z(\Delta_z)=g(\Delta_z)$.

\end{proof}


%
%
\begin{remark}
For all $z\in \Omega$, there exists a unique surjective holomorphic map $h_z\colon U(z)\to \Delta_z$ such that  $g =\gamma_z\circ h_z$  on the neighborhood $U(z)$. If  if $g$ has rank 1 at $z$, then $h_z|_{\Delta_z}={\rm id}$.

\end{remark}

Consider the disjoint union $\coprod_{z\in \Omega} \Delta_z$, and define an equivalence relation in the following way:
 $(x,z)\simeq (y, w)$ if and only if   $\gamma_z(x)=\gamma_w(y)$ and the images  coincide locally near this point.
Define $\widehat \Sigma$ as  $\coprod_{z\in \Omega} \Delta_z$, endowed with the quotient topology, and  denote $\pi_{\simeq}\colon \coprod_{z\in \Omega} \Delta_z\to \widehat \Sigma$ the projection to the quotient.  It is easy to see that the map $\pi_{\simeq}$ is open. For all $z\in \Omega$, define a homeomorphism $\pi_z\colon \Delta_z\to \widehat \Sigma$ as $ \pi_z(x):=[(x,z)]$.

\begin{defn}\label{defgamma}
We define a continuous map $\gamma\colon \widehat \Sigma\to \C^2$ such that $\gamma(\widehat \Sigma)= \Sigma$ in the following way:
$\gamma([(x,z)])=\gamma_z(x).$
Notice that this is well defined.
The map $g: \Omega \rightarrow \C^2$ can be lifted to a unique  surjective continuous map $\widehat{g}: \Omega \rightarrow \widehat{\Sigma}$ such that $g=\gamma\circ \widehat g.$ Such map is defined on $U(z)$ as $\widehat g:= \pi_z \circ h_z.$ Notice that if $g$ has rank 1 at $z$ then $\widehat g|_{\Delta_z}=\pi_z$.
\end{defn}

\begin{lemma}
The topological space $\widehat \Sigma$ is  connected, second countable and Hausdorff.
  \end{lemma}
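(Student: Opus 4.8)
The plan is to establish the three properties in turn: connectedness and second countability are formal, and the Hausdorff property is the only one that uses the local geometry of the curves $\gamma_z(\Delta_z)$.

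First, for connectedness, recall from Definition~\ref{defgamma} that $\widehat g\colon\Omega\to\widehat\Sigma$ is continuous and surjective; since $\Omega$ is connected, so is $\widehat\Sigma=\widehat g(\Omega)$. For second countability, note that $\Omega$, as an open subset of $\C^2$, is Lindel\"of, so the open cover $\{U(z)\}_{z\in\Omega}$ admits a countable subcover $\{U(z_i)\}_{i\in\N}$. Since $h_{z_i}\colon U(z_i)\to\Delta_{z_i}$ is surjective and $\widehat g|_{U(z_i)}=\pi_{z_i}\circ h_{z_i}$, we get $\widehat g(U(z_i))=\pi_{z_i}(\Delta_{z_i})$, hence $\widehat\Sigma=\widehat g(\Omega)=\bigcup_i\pi_{z_i}(\Delta_{z_i})$. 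Each $\pi_{z_i}(\Delta_{z_i})=\pi_\simeq(\Delta_{z_i}\times\{z_i\})$ is open because $\pi_\simeq$ is open, and second countable because $\pi_{z_i}$ is a homeomorphism onto it; a countable union of open second countable subspaces is second countable, which gives the claim.

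For the Hausdorff property, take $[(x,z)]\neq[(y,w)]$. If $\gamma_z(x)\neq\gamma_w(y)$ in $\C^2$, choose disjoint open sets $V_1\ni\gamma_z(x)$, $V_2\ni\gamma_w(y)$; since $\gamma\colon\widehat\Sigma\to\C^2$ is continuous, $\gamma^{-1}(V_1)$ and $\gamma^{-1}(V_2)$ separate the two points. Otherwise $\gamma_z(x)=\gamma_w(y)=:p$, and since the two points are inequivalent, the irreducible analytic curve germs at $p$ of $\gamma_z(\Delta_z)$ and of $\gamma_w(\Delta_w)$ are distinct. I would then fix a ball $B\ni p$ in $\C^2$ small enough that $\gamma_z(\Delta_z)\cap B$ and $\gamma_w(\Delta_w)\cap B$ are representatives of these two germs and $\gamma_z(\Delta_z)\cap\gamma_w(\Delta_w)\cap B=\{p\}$ (two distinct irreducible curve germs meet only at the base point), and then open neighborhoods $\Delta_z'\subseteq\Delta_z$ of $x$ and $\Delta_w'\subseteq\Delta_w$ of $y$ with $\gamma_z(\Delta_z')\subseteq B$ and $\gamma_w(\Delta_w')\subseteq B$. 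Then $N_1:=\pi_z(\Delta_z')$ and $N_2:=\pi_w(\Delta_w')$ are open neighborhoods of $[(x,z)]$ and $[(y,w)]$, and $N_1\cap N_2=\varnothing$: if $\xi=[(a,z)]=[(b,w)]$ with $a\in\Delta_z'$, $b\in\Delta_w'$, then $\gamma_z(a)=\gamma_w(b)$ lies in $\gamma_z(\Delta_z)\cap\gamma_w(\Delta_w)\cap B=\{p\}$, so $\gamma_z(a)=p=\gamma_z(x)$, hence $a=x$ by injectivity of $\gamma_z$, and similarly $b=y$, so $\xi=[(x,z)]=[(y,w)]$, a contradiction.

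The step I expect to require the most care is the choice of $B$ in this last case: it relies on the standard local fact that two distinct irreducible analytic curve germs at a point meet only at that point in a sufficiently small ball, and on the fact that near any of its points each $\gamma_\bullet(\Delta_\bullet)$ is a genuine irreducible analytic curve — the homeomorphic image of a disk, and locally the zero set of a holomorphic function — which is exactly the normalization picture produced in Lemma~\ref{injectivedisk}. No new tools are needed beyond consistently shrinking the disks and the ball.
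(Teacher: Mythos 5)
Your proof is correct and follows essentially the same route as the paper's: connectedness via the continuous surjection $\widehat g\colon\Omega\to\widehat\Sigma$, second countability via openness of $\widehat g$ (you phrase it through Lindel\"of-ness and the charts $\pi_{z_i}$, but the substance is the same), and Hausdorffness by splitting into the two cases $\gamma_z(x)\neq\gamma_w(y)$ and $\gamma_z(x)=\gamma_w(y)$ with non-coinciding germs. The paper is terse on the second case; your filling-in of the standard fact that two distinct irreducible analytic curve germs meet only at the base point in a small ball is exactly the intended argument and is consistent with the normalization picture from Lemma~\ref{injectivedisk}.
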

\begin{proof}
Since $\widehat \Sigma=\hat{g}(\Omega)$, and $\Omega$ is connected, it follows that $\widehat \Sigma$ is connected. Since $\widehat g$ is open, it follows also that $\widehat\Sigma$ is second countable. Let $[(x,z)]\neq [(y,w)]\in \widehat \Sigma$. Then we have two cases. Either $\gamma_z(x)\neq \gamma_w(y)$, or $\gamma_z(x)= \gamma_w(y)$ but the images do not  coincide locally near this point.
  In both cases there  exist a  neighborhood $U\subset \Delta_z$ of $x$ and a neighborhood $V\subset \Delta_w$ of $y$ such that   $\pi_{\simeq}(U)\cap \pi_{\simeq}(V)=\varnothing.$

\end{proof}

 We claim that the collection of charts $(\pi_z)_{z\in \Omega}$ gives  $\widehat \Sigma$ the structure of a smooth Riemann surface.
 Let $z,w\in \Omega$ such that $ \pi_z(\Delta_z)\cap \pi_w(\Delta_w)\neq \varnothing.$
 Then consider the map $$\pi_w^{-1}\circ \pi_z\colon \pi_z^{-1}(\pi_z(\Delta_z)\cap \pi_w(\Delta_w))\to \pi_w^{-1}(\pi_z(\Delta_z)\cap \pi_w(\Delta_w)). $$
  Let $x\in \Delta_z$, $y\in \Delta_w$ such that $\pi_z(x)=\pi_w(y)$. This means that $\gamma_z(x)=\gamma_w(y)$ and the images coincide locally near this point. There exists an open neighborhood $U\subset \Delta_z$ of $x$, an open neighborhood $V\subset \Delta_w$ of $y$,  and a  unique biholomorphic function $k\colon U\to V$ such that
 $\gamma_w\circ k=\gamma_z$. It is easy to see that $k=\pi_w^{-1}\circ \pi_z$ on $U$.

\begin{remark}\label{global-lift2}
With the complex structure just defined on $\widehat \Sigma$, the maps $\gamma$ and  $\widehat g$ are holomorphic.
\end{remark}

\begin{defn}
Define $R\subset \widehat \Sigma$ as the set of points $\zeta\in \widehat \Sigma$ such that there exists $z\in \Omega$ with $\widehat g(z)=\zeta$ and ${\rm rk}_z\widehat g=1$.
\end{defn}
\begin{lemma}
The set $\widehat \Sigma\setminus R$ is discrete.
\end{lemma}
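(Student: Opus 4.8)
The plan is to show that the complement $\widehat\Sigma \setminus R$ is discrete in $\widehat\Sigma$ by a local analysis at each point, using the fact that the rank-$0$ points of $g$ in $\Omega$ are themselves discrete. First I would note that a point $\zeta\in\widehat\Sigma\setminus R$ is, by definition, a point $\zeta = \widehat g(z)$ for which \emph{every} preimage $z$ with $\widehat g(z)=\zeta$ has ${\rm rk}_z\widehat g = 0$; since $g = \gamma\circ\widehat g$ and $\gamma$ is an immersion away from the possible cusp (and in particular has rank $1$ generically), ${\rm rk}_z\widehat g = {\rm rk}_z g$ wherever $\gamma$ is immersive, so such $z$ are rank-$0$ points of $g$. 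The key input is that the rank-$0$ set of $g$ in $\Omega$ is a proper analytic subvariety: indeed $g\colon\Omega\to\C^2$ has maximal rank $1$, so the locus $\{{\rm rk}_z g = 0\}$, being the common zero set of the $2\times 2$ minors of $dg$ (equivalently the zero set of $\partial g_1 \wedge \partial g_2$ together with the vanishing of $dg_1, dg_2$), is an analytic subset of $\Omega$ which is not all of $\Omega$; hence it is a subvariety of dimension at most $1$.

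Next I would argue locally. Fix $\zeta_0 \in \widehat\Sigma\setminus R$ and pick $z_0\in\Omega$ with $\widehat g(z_0)=\zeta_0$; we may work in the chart $U(z_0)$ from Lemma~\ref{injectivedisk}, where $g = \gamma_{z_0}\circ h_{z_0}$ with $h_{z_0}\colon U(z_0)\to\Delta_{z_0}$ surjective holomorphic and $\widehat g = \pi_{z_0}\circ h_{z_0}$. Since $z_0\notin R$-source, $h_{z_0}$ has rank $0$ at $z_0$ (because $\gamma_{z_0}$ is injective, so ${\rm rk}_{z_0}\widehat g = {\rm rk}_{z_0} h_{z_0}$, and ${\rm rk}_{z_0}\widehat g=0$). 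On the other hand $h_{z_0}$ is surjective onto the disk $\Delta_{z_0}$, so its rank-$1$ locus is open and dense in $U(z_0)$, and its rank-$0$ locus $Z := \{{\rm rk}\, h_{z_0} = 0\}$ is a proper analytic subset of $U(z_0)$. The point $\pi_{z_0}(0)\in\widehat\Sigma$ corresponds to the cusp/base point; and a point of $\widehat\Sigma$ near $\zeta_0$ lies outside $R$ precisely when it is the $\pi_{z_0}$-image of a point in $h_{z_0}(Z)$ — but I claim $h_{z_0}(Z)$ is contained in the single point $\{0\}\subset\Delta_{z_0}$: on a connected curve component of $Z$ through $z_0$, the restriction of $h_{z_0}$ has identically vanishing differential and so is constant, and by the fiber structure (Lemma~\ref{isolatedpoint} applied to $g$, whose fibers have no isolated points) every rank-$0$ point of $h_{z_0}$ must map to the cusp value; thus $h_{z_0}(Z) = \{0\}$. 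Consequently $\widehat\Sigma\setminus R$ meets the chart $\pi_{z_0}(\Delta_{z_0})$ in at most the single point $\pi_{z_0}(0) = \zeta_0$, which shows $\zeta_0$ is isolated in $\widehat\Sigma\setminus R$.

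Finally, since $\zeta_0$ was arbitrary, $\widehat\Sigma\setminus R$ is discrete; and as $\widehat\Sigma$ is second countable (proved above) it is in fact countable. The main obstacle I anticipate is the claim that the rank-$0$ locus of $h_{z_0}$ maps to a single point of $\Delta_{z_0}$: one has to rule out a curve of rank-$0$ points of $h_{z_0}$ that moves in $\Delta_{z_0}$, and also the possibility that distinct local sheets of $\widehat\Sigma$ overlap near $\zeta_0$ in a way that reintroduces non-$R$ points. Both are handled by combining the Puiseux normalization of Lemma~\ref{injectivedisk} (which says $\gamma_{z_0}$ is an embedding away from $0$, so any rank drop of $\widehat g$ downstairs of a rank-$1$ point of $g$ is impossible) with Lemma~\ref{isolatedpoint} (no isolated fibers), forcing all rank degeneration of the normalization to be concentrated over the finitely many cusp values; since those cusp values correspond to the discrete rank-$0$ set of $g$, discreteness of $\widehat\Sigma\setminus R$ follows. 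I would also remark that the rank-$0$ locus of $g$ being discrete in $\Omega$ can be taken directly: if it had a curve component $C$, then $g|_C$ is constant, its value $w$ is a fiber point, and picking $q\in C$ at which $g$ has rank $0$ contradicts Lemma~\ref{isolatedpoint} only if $q$ is isolated in $g^{-1}(w)$ — so one instead uses that near a generic point of $C$ the fiber of $g$ is exactly $C$ while at the same time $g$ has rank $1$ at nearby points, an incompatibility that is the crux of the argument and which I would spell out carefully.
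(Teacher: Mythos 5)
Your proof takes a genuinely different route from the paper's, and unfortunately it has a real gap. The paper's argument is a clean one-variable computation: it restricts to the disk $\Delta_w$, where $\widehat g|_{\Delta_w} = \pi_w \circ h_w|_{\Delta_w}$, and observes that $h_w|_{\Delta_w}$ is a \emph{nonconstant holomorphic function of one variable}, so by the identity principle its derivative vanishes only at isolated points; near $w$ the only such point is $w$ itself. Hence every $z$ in a punctured neighborhood of $w$ in $\Delta_w$ satisfies ${\rm rk}_z(h_w|_{\Delta_w})=1$, and since the rank of the restriction bounds below the rank of the map, ${\rm rk}_z h_w = {\rm rk}_z\widehat g = 1$. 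Because $\pi_w$ is a homeomorphism and $h_w|_{\Delta_w}$ is open, $\widehat g$ maps that punctured neighborhood onto a punctured neighborhood of $\zeta_0 = \widehat g(w)$ contained entirely in $R$. That is the whole proof; it never needs to analyze the full two-variable rank-$0$ locus.

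You instead set $Z=\{{\rm rk}\,h_{z_0}=0\}\subset U(z_0)$ and try to prove the much stronger assertion $h_{z_0}(Z)=\{0\}$. The part you justify is correct (on a curve component of $Z$ through $z_0$, $dh_{z_0}\equiv 0$ forces $h_{z_0}$ to be constant with value $0$), but for components of $Z$ \emph{not} through $z_0$, and for isolated points of $Z$, you only gesture at Lemma~\ref{isolatedpoint}. That lemma controls fibers $g^{-1}(\cdot)$, not the critical locus of $h_{z_0}$, and does not yield the claim: there is no reason in general why an isolated rank-$0$ point of a holomorphic $h\colon U\subset\C^2\to\C$ should have image $0$ (consider $h(z,w)=z^2+w^2+c$). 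Moreover your closing remark, that the rank-$0$ locus of $g$ in $\Omega$ is discrete, is false for generic rank-$1$ maps with isolated-point-free fibers (e.g.\ $g(z,w)=(z^2,0)$ has rank-$0$ locus the entire line $\{z=0\}$), so the ``incompatibility'' you allude to does not exist, and this part of the argument cannot be completed as described. Fortunately all of this machinery is unnecessary: once you note (as you do) that ${\rm rk}_{z_0}h_{z_0}=0$ because $\pi_{z_0}$ is a biholomorphism, you should simply restrict $h_{z_0}$ to the one-dimensional slice $\Delta_{z_0}$ and apply the identity principle there, which immediately produces the desired punctured neighborhood of $\zeta_0$ inside $R$.
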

\begin{proof}
Let $w\in \Omega$ such that ${\rm rk}_w\widehat g=0$.
By the identity principle there exists a neighborhood $V$ of $w$ in $\Delta_w$ such that  ${\rm rk}_z(h_w|_{\Delta_w})=1$ for all $z\in V\setminus\{w\}$.
The result follows since   $\widehat g= \pi_w\circ h_w$ on $U(w)$, and $\pi_w\colon \Delta_w\to \pi_w(\Delta_w)$ is a biholomorphism.

\end{proof}

\begin{lemma}
There exists a unique holomorphic map $\widehat{F}: \widehat{\Sigma}\rightarrow \widehat{\Sigma}$ such that the following diagram commutes:

\SelectTips{xy}{12}
\[ \xymatrix{\Omega \ar[rrr]^g\ar[rrd]^{\widehat g}\ar[dd]^F &&& \Sigma \ar[dd]^F\\
&& \widehat\Sigma \ar[ru]^\gamma \ar[dd]^(.25){\widehat F}\\
\Omega\ar'[rr]^g[rrr] \ar[rrd]^{\widehat g} &&& \Sigma\\
&& \widehat\Sigma \ar[ru]^\gamma.}
\]

\end{lemma}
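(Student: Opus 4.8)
The plan is to define $\widehat F$ as the map induced on the quotient $\widehat\Sigma$ by pushing $F$ forward through $\widehat g$: for $\zeta\in\widehat\Sigma$ choose $x\in\Omega$ with $\widehat g(x)=\zeta$ and set $\widehat F(\zeta):=\widehat g(F(x))$. Equivalently, the goal is to show that the holomorphic map $\widehat g\circ F\colon\Omega\to\widehat\Sigma$ is constant on the fibres of $\widehat g$; granting this, and recalling that $\widehat g$ is a continuous open surjection and hence a topological quotient map, $\widehat F$ is a well defined continuous map satisfying $\widehat F\circ\widehat g=\widehat g\circ F$. It is the unique map with this property because $\widehat g$ is onto, and the remaining faces of the diagram are then automatic: composing $\widehat F\circ\widehat g=\widehat g\circ F$ with $\gamma$ and using $g=\gamma\circ\widehat g$ gives $\gamma\circ\widehat F\circ\widehat g=F\circ\gamma\circ\widehat g$, whence $\gamma\circ\widehat F=F\circ\gamma$, while $g\circ F=F\circ g$ is the commutation of $F$ with the limit map $g$. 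In particular the uniqueness assertion follows, since any holomorphic $\widehat F$ making the diagram commute satisfies $\widehat F\circ\widehat g=\widehat g\circ F$.

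\textbf{The main obstacle: well-definedness.} The crux is the implication $\widehat g(x)=\widehat g(y)\Rightarrow\widehat g(F(x))=\widehat g(F(y))$. Unwinding Lemma \ref{injectivedisk} and Definition \ref{defgamma}, the equality $\widehat g(x)=\widehat g(y)$ says precisely that $g(x)=g(y)$ and that the irreducible local analytic curves $g(U(x))=\gamma_x(\Delta_x)$ and $g(U(y))=\gamma_y(\Delta_y)$ have the same germ at the point $g(x)=g(y)$. The key point is that for every $p\in\Omega$ the germ of $g$ at $p$ ``fills out'' the whole branch $g(U(p))$: writing $g|_{U(p)}=\gamma_p\circ h_p$ as in Lemma \ref{injectivedisk}, the factor $h_p$ is a non-constant holomorphic function valued in the one-dimensional disk $\Delta_p$ (non-constant because $g(U(p))=\gamma_p(\Delta_p)$ is a curve), hence an open map, so $g(M)$ and $g(U(p))$ have the same germ at $g(p)$ for every sufficiently small connected neighbourhood $M\subset U(p)$ of $p$. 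Applying this at $x$ and at $F(x)$ — choosing $M$ small enough that $F(M)\subset U(F(x))$, and using $g(F(M))=F(g(M))$ — one finds that the germ of $g(U(F(x)))$ at $g(F(x))$ is the image under $F$ of the germ of $g(U(x))$ at $g(x)$, and likewise for $y$. Since these two germs coincide at $g(x)=g(y)$ and $F$ is a biholomorphism, their $F$-images coincide at $g(F(x))=g(F(y))$; together with $g(F(x))=g(F(y))$ this yields $\widehat g(F(x))=\widehat g(F(y))$, as required.

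\textbf{Holomorphy.} It remains to upgrade $\widehat F$ from continuous to holomorphic. On the open set $R\subset\widehat\Sigma$ the map $\widehat g$ is a submersion near suitable points of each fibre, so it admits local holomorphic sections $s$ (near $\zeta\in R$ pick $x$ with $\widehat g(x)=\zeta$ and ${\rm rk}_x\widehat g=1$, where $\widehat g$ is a submersion); on such a chart $\widehat F=\widehat g\circ F\circ s$, a composition of holomorphic maps by Remark \ref{global-lift2}, hence holomorphic. Since $\widehat\Sigma\setminus R$ is discrete and $\widehat F$ is continuous on all of $\widehat\Sigma$, Riemann's removable singularity theorem promotes $\widehat F$ to a holomorphic map on the Riemann surface $\widehat\Sigma$, and all the commuting relations persist by continuity. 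The one genuinely delicate step, as indicated, is the germ-theoretic verification in the second paragraph that $F$ respects the identifications defining $\widehat\Sigma$; the rest is formal.
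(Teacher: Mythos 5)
Your proof is correct and follows the same essential strategy as the paper's: express $\widehat F$ locally as $\widehat g\circ F$ composed with a section of $\widehat g$ at rank-one points (the paper uses $\pi_z^{-1}$ where you speak of local sections $s$), then remove the discrete singular set $\widehat\Sigma\setminus R$. You are, however, considerably more careful than the paper at two points the paper asserts without justification: the germ-theoretic argument that $\widehat g(x)=\widehat g(y)$ forces $\widehat g(F(x))=\widehat g(F(y))$ (which the paper compresses into the phrase ``this is well-defined''), and the observation that defining $\widehat F$ by pushing forward through the open quotient map $\widehat g$ yields continuity on all of $\widehat\Sigma$, which is exactly what licenses the appeal to Riemann's removable singularity theorem --- the paper's ``can be extended holomorphically'' leaves the required boundedness near $\widehat\Sigma\setminus R$ implicit. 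In short: same route, but yours fills in the gaps.
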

\begin{proof}
Let $\zeta\in R$, and let $z\in \Omega$ such that  $\widehat g(z)=\zeta$ and ${\rm rk}_z\widehat g=1$.  Define on $\pi_z(\Delta_z)$ the map $\widehat F:= \widehat g \circ F  \circ \pi_z^{-1}$. This  is well-defined and holomorphic away from the discrete closed set $\widehat \Sigma\setminus R$, and can be extended holomorphically to the whole $\widehat \Sigma$.
\end{proof}
The inverse of $\widehat{F}$ is given by $\widehat{F^{-1}}$, therefore $\widehat{F}$ is an automorphism.

\begin{lemma}\label{openidentity}
The Riemann surface $\widehat \Sigma$ contains an  open subset  on which the sequence $(\widehat{F}^{m_k})$ converges to the identity.
\end{lemma}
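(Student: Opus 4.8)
The plan is to extract the desired open set directly from the fixed structure supplied by Remark \ref{needrecurrent}. That remark gives an open neighborhood $N$ of $z$ in $\Omega$ with $g(N)\subset\Omega$ and $g|_{g(N)}={\rm id}$. Since $g$ has maximal rank $1$ and cannot be locally constant (otherwise it is constant on the connected set $\Omega$), after shrinking $N$ we may assume $g$ has rank $1$ at every point of $N$; then, near each of its points, $g(N)$ is a smooth complex curve on which $g$ acts as the identity. Fix $z_1\in N$ and set $p:=g(z_1)\in g(N)\subset\Omega$, so that $g(p)=p$; since $g$ restricts to the identity on the smooth curve $g(N)$ through $p$, the differential $d_pg$ is the identity on the tangent line of that curve, hence $g$ has rank $1$ at $p$ as well. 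The claim will be that $(\widehat F^{m_k})$ converges to the identity on the open subset $\pi_p(\Delta')\subset\widehat\Sigma$, where $\Delta'$ is a sufficiently small subdisk centered at $p$ of the affine disk $\Delta_p$ furnished by Lemma \ref{injectivedisk}.

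The heart of the matter is the identity $\widehat g\circ g=\widehat g$ near $p$ along $\Delta_p$. Iterating the commuting diagram that defines $\widehat F$ gives $\widehat F^{m_k}\circ\widehat g=\widehat g\circ F^{m_k}$ on $\Omega$; since $F^{m_k}\to g$ uniformly on compact subsets of $\Omega$ and $\widehat g$ is continuous, $\widehat F^{m_k}\circ\widehat g\to\widehat g\circ g$ locally uniformly on $\Omega$. Restricting to $\Delta_p$, where $\widehat g=\pi_p$ is a homeomorphism onto the chart $\pi_p(\Delta_p)$, this says exactly that $\widehat F^{m_k}\to(\widehat g\circ g)\circ\pi_p^{-1}$ on $\pi_p(\Delta_p)$, so it suffices to verify $(\widehat g\circ g)\circ\pi_p^{-1}={\rm id}$ on $\pi_p(\Delta')$, i.e. $\widehat g(g(\xi))=\pi_p(\xi)$ for $\xi$ near $p$ in $\Delta_p$. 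Using $g=\gamma_p\circ h_p$ and $\widehat g=\pi_p\circ h_p$ on $U(p)$, together with $\gamma_p=g|_{\Delta_p}$ and $\gamma_p(\Delta_p)=g(U(p))$, this reduces to $h_p(g(\xi))=\xi$, which follows once we know that $g$ restricts to the identity on the local curve $g(U(p))$ near $p$: in that case $g(\gamma_p(\xi))=\gamma_p(\xi)=\gamma_p\big(h_p(\gamma_p(\xi))\big)$, and injectivity of $\gamma_p$ yields $h_p(\gamma_p(\xi))=\xi$.

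So the single substantive point is that $g|_{g(U(p))}={\rm id}$ near $p$. For this, observe that from $g=\gamma_p\circ h_p$ on $U(p)$ every fixed point of $g$ in $U(p)$ lies in $g(U(p))=\gamma_p(\Delta_p)$; hence ${\rm Fix}(g)\cap U(p)$ is contained in the \emph{smooth} curve $g(U(p))$. On the other hand ${\rm Fix}(g)\cap U(p)$ contains the germ at $p$ of the smooth curve $g(N)$. Two smooth curve-germs at $p$, one contained in the other, must coincide; thus $g(N)$ and $g(U(p))$ agree near $p$, so $g={\rm id}$ on a neighborhood of $p$ in $g(U(p))$, as required. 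Feeding this back gives $\widehat g(g(\xi))=\pi_p(\xi)$ for $\xi$ near $p$, and upgrading this pointwise identity to locally uniform convergence of $(\widehat F^{m_k})$ to the identity on $\pi_p(\Delta')$ is routine (uniform continuity of $\widehat g$ on a compact subdisk, or Vitali's theorem). I expect the main obstacle to be exactly this matching of the two local pictures — the ``abstract'' fixed curve produced by recurrence via Remark \ref{needrecurrent} and the ``concrete'' local curve $g(U(p))$ coming from the normalization construction — together with bookkeeping of the nested shrinkings (first $N$ so that $g$ has rank $1$; then $U(p)$ so that $g(U(p))\subset\Omega$ and the curve-germs at $p$ agree; then $\Delta'$ so that $\gamma_p(\Delta')\subset U(p)$); working at a point $p$ where $g$ has rank $1$ and all relevant curves are smooth is precisely what makes these matchings legitimate.
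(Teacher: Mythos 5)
Your proof is correct, and the key ingredients — Remark \ref{needrecurrent}, the commutation $\widehat F^{m_k}\circ\widehat g=\widehat g\circ F^{m_k}$, continuity of $\widehat g$, and the convergence $F^{m_k}\to g$ — are the same ones the paper uses. The difference is one of parametrization. The paper evaluates at the points $y\in g(N)$ themselves, so that $\widehat F^{m_k}(\widehat g(y))=\widehat g(F^{m_k}(y))\to\widehat g(y)$ is automatically the identity on the image $\widehat g(g(N))$, and all that remains is to note that this set is open (the paper asserts openness of $\widehat g(N)$; one either observes $\widehat g|_{g(N)}$ is a nonconstant holomorphic map between one-dimensional manifolds, or equivalently that $\widehat g\circ g=\widehat g$ on $N$). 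You instead parametrize by the chart disk $\Delta_p$; since $\pi_p(\Delta')$ is open by construction of the charts, openness is free, but you now must check that the locally uniform limit $\widehat g\circ g\circ\pi_p^{-1}$ is the identity, which reduces to $h_p\circ g={\rm id}$ on $\Delta'$ — and this is where your curve-germ matching between $g(N)$ and $g(U(p))$ enters. Your extra step is sound and in fact makes explicit a point the paper leaves tacit (that the recurrent fixed curve from Remark \ref{needrecurrent} agrees with the normalization curve near $p$, which is what guarantees $\widehat g\circ g=\widehat g$ locally), though the paper's choice to parametrize by $y\in g(N)$ lets it reach the same conclusion more directly.
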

\begin{proof}
By Remark \ref{needrecurrent} there exists an open neighborhood $N$ of $z$  such that $g(N)\subset \Omega$ and for all $y\in g(N)$, $F^{m_k}(y)\to y$.
Then, for all $y\in g(N)$, $$\widehat F^{m_k} (\widehat{g}(y))=\widehat g(F^{m_k} (y))\to  \widehat{g}(y).$$
  The set $\widehat g(N)$ is open.
 \end{proof}

\begin{lemma}\label{dichotomy}
Either there exists a $j \in \mathbb N$ for which $\widehat{F}^j = \mathrm{Id}$, or $\widehat{\Sigma}$ is biholomorphic to a circular domain, and the action of $\widehat{F}$ is conjugate to an irrational rotation.
\end{lemma}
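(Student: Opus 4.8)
The plan is to exploit the fact that $\widehat F$ is an automorphism of the Riemann surface $\widehat\Sigma$ which, by Lemma \ref{openidentity}, agrees with the identity "up to a subsequence" on a nonempty open set. First I would invoke the uniformization theorem and the classification of Riemann surfaces on which a sequence of iterates of an automorphism can converge to the identity on an open set: since $(\widehat F^{m_k})$ converges to $\mathrm{id}$ on an open subset $W\subset\widehat\Sigma$, by the identity principle for holomorphic maps between Riemann surfaces the convergence $\widehat F^{m_k}\to\mathrm{id}$ holds on all of $\widehat\Sigma$ (any limit of a subsequence is holomorphic and equals the identity on $W$, hence everywhere, so the full sequence converges to $\mathrm{id}$). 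In particular $\widehat\Sigma$ admits a sequence of automorphisms converging to the identity and is therefore not hyperbolic in the sense of having an automorphism group that acts properly discontinuously with no such sequences; concretely, $\widehat\Sigma$ cannot be a hyperbolic Riemann surface with trivial-ish automorphism group. The clean way to organize this: pass to the universal cover and lift $\widehat F$ to an automorphism of $\D$, $\C$, or $\P^1$.

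Next I would split into cases according to the universal cover of $\widehat\Sigma$. If the universal cover is $\P^1$, then $\widehat\Sigma=\P^1$, but $\P^1$ carries no sequence of automorphisms converging to the identity other than eventually-trivial ones unless... actually $\mathrm{Aut}(\P^1)=\mathrm{PSL}_2(\C)$ does contain sequences converging to $\mathrm{id}$, so here one uses instead that $\widehat\Sigma\subset\C^2$ is covered by affine disks and is not compact, ruling out $\P^1$. If the universal cover is $\C$, then $\widehat\Sigma$ is $\C$, $\C^*$, or a torus; the torus is again excluded by non-compactness, and in the $\C$ or $\C^*$ cases $\widehat F$ lifts to an affine map $z\mapsto az+b$ of $\C$. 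If the universal cover is $\D$, then $\widehat F$ lifts to a Möbius automorphism of $\D$. In each case one then analyzes the possible dynamics of an automorphism $\widehat F$ with the property that a subsequence of iterates tends to the identity: such an automorphism must be \emph{elliptic} (conjugate to a rotation about a fixed point in the disk/plane case) or the identity, because parabolic and hyperbolic elements, as well as affine maps with $|a|\neq 1$ or with $a=1,b\neq0$, have iterates that escape to infinity or to the boundary and cannot return near the identity. This forces $\widehat F$ to be conjugate to $\zeta\mapsto e^{i\theta}\zeta$ on $\D$, $\C$, or $\C^*$ (and the torus case gives a translation of finite order, hence $\widehat F^j=\mathrm{id}$). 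If $\theta$ is a rational multiple of $2\pi$ we are in the first alternative $\widehat F^j=\mathrm{id}$; if $\theta$ is irrational we are in the second alternative, and $\widehat\Sigma$ is one of $\D,\C,\C^*$, or more generally a rotation-invariant circular domain obtained as a quotient — in fact being simply rotation-invariant and covered by $\D$ or $\C$ one checks it must itself be a circular domain (disk, punctured disk, annulus, plane, punctured plane), since the rotation action together with the covering forces the domain to be a round annulus/disk.

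The main obstacle I expect is the bookkeeping in ruling out the parabolic and loxodromic/non-unitary-affine cases cleanly, and in pinning down exactly which circular domains can occur: one has to argue that an automorphism of a Riemann surface $\widehat\Sigma$ admitting iterates converging to the identity must have compact closure of its iterate group, hence be contained in a compact (torus) subgroup of $\mathrm{Aut}(\widehat\Sigma)$, which then restricts $\widehat\Sigma$ to the short list of circular domains whose automorphism group has a nontrivial compact part acting by rotations. Establishing that $\widehat F^{m_k}\to\mathrm{id}$ globally (not just on $W$) and that therefore the closure of $\{\widehat F^n\}$ in $\mathrm{Aut}(\widehat\Sigma)$ (with the compact-open topology) is a compact group is the technical heart; once that is in place, the structure theory of Riemann surface automorphism groups delivers the dichotomy. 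I would also need to note that the annulus modulus is preserved, but that is automatic. Finally, to match the statement exactly, in the rational case I take $j$ to be the order of the rotation, and in the irrational case the identification with a circular domain together with the explicit rotation gives the conjugacy claimed.
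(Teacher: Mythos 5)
Your plan follows essentially the same route as the paper: rule out compact $\widehat\Sigma$ (so $\P^1$ and the torus are excluded), split on whether $\widehat\Sigma$ is hyperbolic, use Lemma \ref{openidentity} to force $\widehat F$ to be elliptic/a rotation, and then invoke the classification of Riemann surfaces with non-discrete automorphism group (the paper cites Farkas--Kra, p.~294, for the circular-domain conclusion; you re-derive it via the universal cover and the compact closure of the iterate group, which is fine). Two imprecisions are worth flagging. First, your opening move --- ``since $\widehat F^{m_k}\to\mathrm{id}$ on $W$, by the identity principle the convergence holds on all of $\widehat\Sigma$'' --- is not a valid inference on its own: the identity principle only tells you that any subsequential limit which exists and is holomorphic into $\widehat\Sigma$ must be $\mathrm{id}$, and to get a convergent subsequence you first need precompactness (normality) of $(\widehat F^{m_k})$. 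The paper handles exactly this: in the hyperbolic case Montel gives normality and then the argument you sketch goes through; in the non-hyperbolic cases $\C$ and $\C^*$ one cannot appeal to normality and instead analyzes automorphisms directly (affine maps and $z\mapsto az^{\pm1}$), which you do carry out later --- but the blanket opening claim should be withdrawn or reorganized so that normality/direct analysis comes first. Second, you write ``$\widehat\Sigma\subset\C^2$'' to rule out $\P^1$; in fact $\widehat\Sigma$ is an abstract Riemann surface and is not embedded in $\C^2$ (the map $\gamma:\widehat\Sigma\to\C^2$ need not be injective). The correct reason for non-compactness, as in the paper, is that $\gamma$ is a nonconstant holomorphic map into $\C^2$, which by the maximum principle cannot exist on a compact Riemann surface.
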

Recall that by a circular domain we mean either the  disk, the punctured disk, an annulus, the complex plane or the punctured plane.
\begin{proof}
Assume that  $\widehat{F}^j \neq  \mathrm{Id}$ for all $j\leq 1$.
Since the holomorphic map $\gamma\colon\widehat\Sigma\to \C^2$ is nonconstant, it follows that the Riemann surface $\widehat \Sigma$ is not compact.
Thus if  $\widehat \Sigma$ is not a hyperbolic Riemann surface, then it has to be biholomorphic either
to $\C$ or to  $\C^*$. In both  cases, since $\widehat F$ is an automorphism, it is easy to see that Lemma \ref{openidentity} implies that  $\widehat F$ is a rotation.
If $\widehat \Sigma$ is  hyperbolic, then the family $(\widehat F^n)$ is normal, and thus Lemma \ref{openidentity} implies that the sequence $(\widehat F^{m_k} )$ converges to the identity uniformly on compact subsets of $\widehat \Sigma$. Thus the  automorphism group of $\widehat{\Sigma}$ is non-discrete. Hence (see e.g. \cite[p. 294]{FK}) the Riemann surface $\widehat{\Sigma}$ is biholomorphic to a circular domain and the action of $\widehat{F}$ is conjugate to an irrational rotation.
\end{proof}

\begin{defn}
Define the set $I\subset \widehat \Sigma\times \widehat\Sigma $ as the set of pairs $(x,y) $ such that  $x\neq y$ and $\gamma(x)=\gamma(y)$.
Define the set $C\subset \widehat\Sigma$  as the set of $x$ such that $\gamma\colon \widehat \Sigma\to \C^2$ has rank $0$ at $x$.
 \end{defn}

Since the map $\pi_\simeq$ is open, it follows immediately that the set $I$  is discrete in $\widehat \Sigma\times \widehat \Sigma$ and that the set $C$  is discrete in $\widehat\Sigma$.

Our goal is to prove that the set $\Sigma$ is a closed complex submanifold of $\Omega$. We will first consider the case where $\widehat{\Sigma}$ is biholomorphic to a circular domain  and $\widehat{F}$ is conjugate to an irrational rotation.

\begin{lemma}
If $\widehat F$ is conjugate to an irrational rotation then there is at most one element  $\zeta_0 \in C$. The set $I$ is empty, and thus the map $\gamma\colon \widehat\Sigma\to \Sigma$ is injective.
\end{lemma}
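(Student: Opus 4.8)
The plan is to exploit that $C$ and $I$ are discrete sets invariant under $\widehat F$ (resp. under $\widehat F\times\widehat F$), together with the fact that an irrational rotation of a circular domain has dense orbits away from its (unique, if any) fixed point.

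First I would record the commutation $\gamma\circ\widehat F=F\circ\gamma$ on $\widehat\Sigma$, which is one of the faces of the cube defining $\widehat F$, with $F$ the ambient automorphism of $\C^2$. Differentiating this relation and using that $d\widehat F$ and $dF$ are everywhere invertible yields ${\rm rk}_{\widehat F(\zeta)}\gamma={\rm rk}_\zeta\gamma$ for every $\zeta\in\widehat\Sigma$, hence $\widehat F(C)=C$. Similarly, if $(x,y)\in I$ then $\widehat F(x)\neq\widehat F(y)$ by injectivity of $\widehat F$, while $\gamma(\widehat F(x))=F(\gamma(x))=F(\gamma(y))=\gamma(\widehat F(y))$, so $\widehat F\times\widehat F$ maps $I$ into itself; applying the same reasoning to $\widehat F^{-1}$ shows $C$ and $I$ are in fact invariant.

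Next I would fix a biholomorphism $\phi\colon\widehat\Sigma\to D$ onto a circular domain conjugating $\widehat F$ to the rotation $R(w)=e^{i\theta}w$, where $e^{i\theta}$ is not a root of unity. The only property of $R$ I use is that for $w_0\in D\setminus\{0\}$ the forward orbit $\{e^{in\theta}w_0:n\in\N\}$ is infinite, dense in the circle $\{|w|=|w_0|\}$, and hence, as a topological subspace, has no isolated point. Now if $\zeta_0\in C$, its $\widehat F$-orbit lies in $C$; transporting by $\phi$ we obtain a subset of the discrete set $\phi(C)$ which, unless $\phi(\zeta_0)=0$, has no isolated point — a contradiction. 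Hence $\phi(C)\subseteq\{0\}$, so $|C|\le 1$. For the second assertion, suppose $(x_0,y_0)\in I$ and set $(a_0,b_0):=(\phi(x_0),\phi(y_0))$, so $a_0\neq b_0$ and at least one of them is nonzero. The orbit $\{(e^{in\theta}a_0,e^{in\theta}b_0):n\in\N\}$ lies in $(\phi\times\phi)(I)$, is infinite, and accumulates at each of its own points since $\{n\theta\bmod 2\pi\}$ is dense; this contradicts the discreteness of $I$. Therefore $I=\varnothing$, which is precisely the statement that $\gamma(x)=\gamma(y)$ forces $x=y$, i.e. that $\gamma$ is injective.

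The only delicate points are bookkeeping ones: ensuring the chain-rule argument for the invariance of $C$ uses the ambient map $F$ on $\C^2$ (whose differential is invertible), and phrasing the rotation argument so that it relies solely on the already-established discreteness of $C$ and $I$, thereby avoiding any appeal to closedness that would require a separate verification.
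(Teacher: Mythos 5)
Your proof is correct and follows essentially the same strategy as the paper's: establish that $C$ and $I$ are invariant under $\widehat F$ (resp.\ $\widehat F\times\widehat F$), then derive a contradiction between discreteness and the density of irrational-rotation orbits, leaving only the possible fixed point. The paper asserts the invariance without comment; you spell out the chain-rule argument (using invertibility of $dF$ and $d\widehat F$) for $C$ and the commutation-plus-injectivity argument for $I$, which is a small but welcome amount of extra detail rather than a different approach.
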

\begin{proof}
The set $C$ is invariant by $\widehat F$. Since the action of $\widehat{F}$ is conjugate to an irrational rotation, and since $C$ is discrete it follows that  $C$ can only contain the  center of rotation $\zeta_0$ (if there is one).

Similarly, the set $I$ is  invariant by the map $(x,y)\mapsto (\widehat F(x),\widehat F(y)),$ but this contradicts the discreteness of $I$.
\end{proof}

Let $\widehat{V}\subset  \subset \widehat \Sigma\setminus \{\zeta_0\}$ be open and invariant under the action of $\widehat{F}$, and let $V$ be its image in $\mathbb C^2$, which  is an embedded complex submanifold of $\C^2$.

We claim that there exists a continuous function $\varphi: V \rightarrow \mathbb (0, \infty)$, bounded from above and from below by compactness, such that
\begin{equation}\label{cocycle}
\|d_z(F|_V)\| = \frac{\varphi(z)}{\varphi(F(z))}.
\end{equation}

Indeed, regardless of whether $\widehat{\Sigma}$ is a hyperbolic or Euclidean Riemann surface, there exists a conformal metric $\|\cdot \|$ on $\widehat{\Sigma}$ which is invariant under $\widehat{F}$.
The function $$\varphi(z):=\|d_{z}\gamma^{-1}\|$$ satisfies (\ref{cocycle}).

Given $\epsilon>0$ and $z \in V$, we define the tangent cone $\mathcal C_z \subset T_z(\mathbb C^2)$ by
$$
\mathcal{C}_z = \{ w : |\langle w, v_z^\bot\rangle | \le \epsilon \varphi(z)^2 |\langle w, v_z\rangle |\},
$$
where  $v_z$ is a unit tangent vector to $z \in V$ and  $v_z^\bot$ is a unit vector orthogonal to $v_z$.

\begin{lemma}
One can choose $\epsilon >0$ sufficiently small so that
$$
d_zF (\mathcal{C}_z) \subset\subset  \mathcal{C}_{F(z)}.
$$
\end{lemma}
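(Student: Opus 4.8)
The goal is to find $\epsilon > 0$ so that the image cone $d_zF(\mathcal{C}_z)$ is compactly contained in $\mathcal{C}_{F(z)}$, uniformly in $z \in V$. The key structural fact is that $V$ is one-(complex-)dimensional, so at each point the tangent space $T_z(\C^2)$ splits as $\C v_z \oplus \C v_z^\bot$, and $\mathcal{C}_z$ is a thin cone around the tangent line $\C v_z$ whose opening is governed by the scale $\epsilon\,\varphi(z)^2$. The plan is to decompose the differential $d_zF$ into the part that maps $T_zV$ to $T_{F(z)}V$ and the ``off-diagonal'' part, and then to show that the tangential direction expands at exactly the rate $\varphi(z)/\varphi(F(z))$ dictated by \eqref{cocycle}, while the transverse behavior is controlled uniformly by compactness of $V$ (and the fact that $F$ is an automorphism with bounded derivatives on a relatively compact set).

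First I would write an arbitrary $w \in \mathcal{C}_z$ as $w = a v_z + b v_z^\bot$ with $|b| \le \epsilon\,\varphi(z)^2 |a|$, and then expand $d_zF(w) = a\,d_zF(v_z) + b\,d_zF(v_z^\bot)$ in the basis $v_{F(z)}, v_{F(z)}^\bot$ at the image point. Since $V$ is invariant, $d_zF(v_z)$ is tangent to $V$ at $F(z)$, hence a multiple of $v_{F(z)}$; its length is $\|d_z(F|_V)\| = \varphi(z)/\varphi(F(z))$ by \eqref{cocycle}. The vector $d_zF(v_z^\bot)$ has some tangential component and some transverse component, both bounded in norm by $\|d_zF\| \le M$ for a uniform constant $M$ (using that $\overline{V}$ is compact in $\C^2$ and $F$ is smooth). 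Collecting terms, the transverse component of $d_zF(w)$ is $O(|b|)= O(\epsilon\,\varphi(z)^2|a|)$, while the tangential component has size at least $(\varphi(z)/\varphi(F(z)))|a| - O(|b|)$, which for $\epsilon$ small is comparable to $|a|$. The ratio of transverse to tangential is therefore $O(\epsilon\,\varphi(z)^2 \cdot \varphi(F(z)))$; since $\varphi$ is bounded above and below on $V$, this is at most $\tfrac{1}{2}\epsilon\,\varphi(F(z))^2$ once $\epsilon$ is chosen small enough depending only on the uniform bounds $M$, $\sup\varphi$, $\inf\varphi$. This gives $d_zF(w) \in \tfrac12\mathcal{C}_{F(z)}$, hence the strict (uniform) inclusion $d_zF(\mathcal{C}_z) \subset\subset \mathcal{C}_{F(z)}$.

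The main obstacle, and the reason the specific power $\varphi(z)^2$ appears in the definition of $\mathcal{C}_z$, is the bookkeeping that makes the estimate a genuine contraction rather than merely an inclusion: one has to verify that the $\varphi$-dependent factor picked up from the tangential expansion rate \eqref{cocycle} combines correctly with the $\varphi$-scaling built into the cones so that the ``shrinkage factor'' is a universal constant ($\tfrac12$, say) independent of $z$. Once the normalization $\varphi(z):=\|d_z\gamma^{-1}\|$ with its cocycle identity is in hand, this is a routine but careful matching of exponents; the uniformity over $z \in V$ is then immediate from compactness. I would also note that the only place invariance of $V$ is used is to conclude $d_zF(v_z) \parallel v_{F(z)}$; everything else is soft.
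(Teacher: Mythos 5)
Your decomposition into tangential and transverse parts, the use of the cocycle \eqref{cocycle} for the tangential rate, and the reduction to a uniform ratio estimate all match the paper's outline, but the argument has a genuine gap at the crucial estimate on the transverse component. You bound the transverse-to-transverse part of $d_zF$ simply by $\|d_zF\|\le M$; this is too crude, and the fact that it is too crude is visible in your final inequality, where you want
$$
M\epsilon\,\varphi(z)\varphi(F(z)) \le \tfrac12\epsilon\,\varphi(F(z))^2
$$
to hold ``once $\epsilon$ is chosen small enough.'' Here $\epsilon$ cancels from both sides, leaving the condition $2M\varphi(z)\le \varphi(F(z))$, which is a statement about $M$ and $\varphi$ with no $\epsilon$ in it and no reason to hold. (There is also a small slip where you write the ratio as $O(\epsilon\varphi(z)^2\varphi(F(z)))$ rather than $O(\epsilon\varphi(z)\varphi(F(z)))$, but the cancellation problem is the same either way.) In other words, shrinking $\epsilon$ shrinks both the input cone and the target cone at the same rate, so a soft bound on $\|d_zF\|$ cannot by itself produce strict cone contraction.

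The missing ingredient is the constant Jacobian, which the paper uses to replace the crude bound $M$ by the exact value of the transverse-to-transverse entry. In the basis $(v_z, v_z^\perp)\mapsto(v_{F(z)},v_{F(z)}^\perp)$ the matrix of $d_zF$ is upper triangular (since $T_zV$ is preserved), say $\begin{pmatrix}\theta(z)&\alpha(z)\\0&\beta(z)\end{pmatrix}$, so $\theta(z)\beta(z)=\pm\delta$ and hence $|\beta(z)|=|\delta|\,\varphi(F(z))/\varphi(z)$. With this, the transverse output is $|\beta(z)||b|\le |\delta|\epsilon\,\varphi(z)\varphi(F(z))|a|$, and the desired inequality reduces to
$$
\epsilon\,|\alpha(z)| < \frac{1-|\delta|}{\varphi(z)\varphi(F(z))},
$$
which \emph{does} follow for $\epsilon$ small, uniformly on $V$, precisely because $|\delta|<1$ (by the earlier lemma that $F$ is strictly volume decreasing in the rank-one case) and because $\varphi$ and $\alpha$ are bounded on $V$. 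So the strict volume decrease $|\delta|<1$ is not a side remark but the load-bearing fact; your proof needs to invoke it explicitly and use the exact transverse rate $|\beta(z)|=|\delta|\varphi(F(z))/\varphi(z)$ rather than a generic operator-norm bound.
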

\begin{proof}
This is a matter of linear algebra. Without loss of generality we may assume that $v_z = v_{F(z)} = (1,0)$. Thus, the cone field $\mathcal{C}_z$ contains all vectors $w = (w_1, w_2)$ for which
$$
|w_2| \leq \epsilon\varphi(z)^2|w_1|.
$$
The vector $d_zF(w)$ is given by
$$
d_zF(w) = \left(\theta(z)w_1 +\alpha(z) w_2, \beta(z) w_2\right),
$$
where $|\theta(z)|=\frac{\varphi(z)}{\varphi(F(z))}$.
Since $F$ has constant Jacobian determinant $\delta$ it follows that
$$
|\theta(z)\beta(z)| = |\delta|.
$$
Since  $|\alpha(z)|$ is bounded on $V$, by choosing $\epsilon$ sufficiently small we can guarantee that for all  $z \in V$ we have
$$
\frac{1-|\delta|}{ \varphi(F(z))\varphi(z)} >  \epsilon |\alpha(z)|,
$$
 from which it follows that
$$
\epsilon  \varphi(F(z))^2  |d_zF(w)_1| >|d_zF(w)_2|.
$$
Thus, $d_zF$ sends the cone $\mathcal{C}_z$ strictly into $\mathcal{C}_{F(z)}$.
\end{proof}

\begin{lemma}
$\Sigma\setminus \gamma(\zeta_0)$ is contained in $\Omega$.
\end{lemma}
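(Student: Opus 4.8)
The plan is to show that a neighborhood of every point $p\in\Sigma\setminus\gamma(\zeta_0)$ lies in $\Omega$; since $\Sigma\subset\overline\Omega$, this is the full content of the lemma (when there is no center of rotation the set $\gamma(\zeta_0)$ is empty and one simply treats all of $\Sigma$). As $\gamma\colon\widehat\Sigma\to\Sigma$ is injective in the irrational rotation case, write $p=\gamma(\zeta)$ with $\zeta\neq\zeta_0$; then $\zeta\notin C$, so $\gamma$ has rank $1$ at $\zeta$. Conjugating $\widehat F$ to an irrational rotation of a circular domain, the closure $\Gamma$ of the $\widehat F$-orbit of $\zeta$ is an invariant circle compactly contained in $\widehat\Sigma\setminus\{\zeta_0\}$. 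First I would fix an invariant, relatively compact open set $\widehat V$ with $\Gamma\subset\widehat V$ and $\overline{\widehat V}\subset\widehat\Sigma\setminus\{\zeta_0\}$ (for instance a thin invariant annulus around $\Gamma$), set $V:=\gamma(\widehat V)$ — a relatively compact embedded holomorphic curve in $\C^2$ with $F(V)=V$ — and put $\Gamma_p:=\gamma(\Gamma)\subset V$, a compact $F$-invariant set with $p\in\Gamma_p\subset\overline\Omega$, lying at positive distance from $\partial\overline V$. The cone field $\mathcal C_z$ and the cocycle $\varphi$ constructed above for this $\widehat V$ are then available.

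The core step is to turn the cone condition $d_zF(\mathcal C_z)\subset\subset\mathcal C_{F(z)}$ into control of the two-dimensional dynamics near $\Gamma_p$. The extra ingredient is \emph{transverse contraction}: in the notation above $|\theta(z)\beta(z)|=|\delta|$ with $|\delta|<1$, so along any orbit $\prod_{j<n}|\beta(F^jz)|=|\delta|^n\,\varphi(F^nz)/\varphi(z)\to 0$, while $\prod_{j<n}|\theta(F^jz)|=\varphi(z)/\varphi(F^nz)$ stays bounded above and below (by boundedness of $\varphi$). A standard graph-transform argument then yields $r_0>0$ and a compact $K\subset\C^2$ such that the tubular neighborhood $N:=\{q\in\C^2:\mathrm{dist}(q,\Gamma_p)<r_0\}$ satisfies $F^n(N)\subset K$ for all $n\geq 0$, and in fact $F^n|_N$ converges uniformly to a map with image in $\Gamma_p$. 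The mechanism is that, writing $F^n(q)$ relative to the nearest point of the rotation orbit on $\Gamma_p$, the transverse component contracts geometrically by the displayed product, the tangential component merely follows a rotation and so stays in a compact part of $V$, and the quadratic error terms of the nonlinearity are absorbed once $r_0$ is small; the strict forward-invariance of $\mathcal C_z$ is precisely the coordinate-free device that makes this iteration converge.

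Finally, since $F^n|_N$ is uniformly bounded it is a normal family by Montel, and every limit map takes values in $K\subset\C^2\subset\P^2$, so $N$ is contained in the Fatou set. Being an $r_0$-neighborhood of the connected set $\Gamma_p$, $N$ is connected, and it meets $\Omega$ because $\Gamma_p\subset\overline\Omega$; hence $N\subset\Omega$, and in particular $p\in\Omega$. Since $p$ was an arbitrary point of $\Sigma\setminus\gamma(\zeta_0)$ (with $\widehat V$ chosen per $p$), this gives $\Sigma\setminus\gamma(\zeta_0)\subset\Omega$. I expect the main obstacle to be exactly the graph-transform step: deducing rigorously, from the strict forward-invariance of the cone field together with the transverse contraction, that the forward orbit of a full two-dimensional neighborhood of $\Gamma_p$ stays in a fixed compact subset of $\C^2$ (and converges to $\Gamma_p$); the remaining steps are soft.
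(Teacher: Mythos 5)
Your proposal is correct and follows essentially the same route as the paper: extend the invariant cone field to a full neighborhood of a compactly embedded invariant piece of $\Sigma$, use the transverse contraction coming from $|\delta|<1$ to build stable manifolds (equivalently, run a graph transform), conclude that forward iterates on a tubular neighborhood stay in a fixed compact subset of $\C^2$, and deduce normality and hence membership in $\Omega$. The only difference is presentational — you localize around one rotation-orbit closure $\Gamma_p$ and spell out the graph-transform mechanism, whereas the paper works directly with the invariant set $V$ and cites Hirsch--Pugh--Shub for the normal hyperbolicity step — but the underlying argument is the same.
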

\begin{proof}
Since $F$ is $C^1$, we can extend the invariant cone field to a neighborhood $\mathcal{N}(V )$. Let $z$ be a point whose forward orbit remains in  $\mathcal{N}(V )$, which holds in particular for all points in $V$. Then there exists a  stable manifold $W^s(z)$, transverse to $V$, and  these stable manifolds fill up a neighborhood of $V$, see the reference \cite{HPS1970} for background on normal hyperbolicity.

The forward iterates of F form a normal family on this neighborhood, which implies that $V$ is contained in the Fatou set.
\end{proof}

\begin{lemma}
The set $C\subset \widehat \Sigma$ is empty, and   $\Sigma\subset \Omega$.
\end{lemma}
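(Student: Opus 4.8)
The plan is to rule out the last possibility left open by the previous lemma, namely that the center of rotation $\zeta_0\in C$ actually occurs. The key observation is that everything already proven applies to $\widehat\Sigma\setminus\{\zeta_0\}$ and its image $\Sigma\setminus\gamma(\zeta_0)$ inside $\Omega$; what remains is a local statement near the single point $p_0:=\gamma(\zeta_0)$. First I would record that $p_0\in\overline{\Omega}\cap\C^2$, since $\Sigma\subset\overline\Omega\cap\C^2$, and that $F(p_0)=p_0$ because the fixed point of $\widehat F$ maps to a fixed point of $F|_\Sigma$ and $\gamma$ is $\widehat F$--$F$ equivariant by the commuting diagram. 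So the real content is: if $p_0$ were a boundary point of $\Omega$, we would derive a contradiction.

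Next I would exploit the cone field and the normal hyperbolicity argument from the preceding lemma, but now pushed all the way up to $\zeta_0$. On a punctured invariant neighborhood $\widehat V\subset\subset\widehat\Sigma\setminus\{\zeta_0\}$ we already know the image $V\subset\Omega$ and that $V$ lies in the Fatou set via its stable manifolds. The point is to show these stable manifolds, together with $V$ itself, accumulate on $p_0$ in a controlled way: since $\widehat F$ is conjugate to an irrational rotation, orbits in $\widehat V$ come arbitrarily close to $\zeta_0$, so the corresponding forward $F$-orbits in $V$ come arbitrarily close to $p_0$. Because $V\subset\Omega$ and $\Omega$ is open, one concludes that a full neighborhood of $p_0$ in $\C^2$ — swept out by the stable manifolds through points of $V$ near $p_0$ — is contained in $\Omega$, hence $p_0\in\Omega$. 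The technical care here is in checking that the invariant cone field, which was defined using the conformal metric $\varphi$ on $\widehat\Sigma\setminus\{\zeta_0\}$, still behaves well as one approaches $\zeta_0$: the metric degenerates at the center of an irrational rotation on a hyperbolic or Euclidean surface (it is smooth there in fact), so $\varphi$ extends continuously and positively across $\zeta_0$, and the cone inclusion $d_zF(\mathcal C_z)\subset\subset\mathcal C_{F(z)}$ persists on a genuine neighborhood of $p_0$.

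Once $p_0\in\Omega$ is established, I would argue that $\zeta_0$ cannot in fact be a ramification point of $\gamma$, i.e. $C=\varnothing$ after all. Near $\zeta_0$ the map $\gamma$ has a Puiseux expansion $t\mapsto(t^m,\dots)$ with $m\geq 2$ if $\zeta_0\in C$, so $\Sigma=\gamma(\widehat\Sigma)$ has a cusp at $p_0$. But $g=\gamma\circ\widehat g$ with $\widehat g$ holomorphic and surjective onto $\widehat\Sigma$, and $g$ is a limit of automorphisms $F^{n_k}$; running the cone-field/stable-manifold picture through $p_0$ shows that a neighborhood of $p_0$ in $\C^2$ is foliated by the stable manifolds $W^s(z)$ transverse to $\Sigma$, which forces $\Sigma$ to be a submanifold near $p_0$ — a cusp cannot be a leaf space of a transverse holomorphic foliation by disks. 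Concretely: the local retraction onto $\Sigma$ provided by the stable foliation is holomorphic, and a holomorphic retraction has smooth image, contradicting the cusp. Hence $C=\varnothing$, $\gamma$ is an immersion, and combined with the injectivity ($I=\varnothing$) from the previous lemma, $\gamma\colon\widehat\Sigma\to\Sigma$ is a biholomorphism onto an embedded submanifold; together with $\Sigma\setminus\gamma(\zeta_0)\subset\Omega$ and $p_0\in\Omega$ we get $\Sigma\subset\Omega$.

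The main obstacle I anticipate is the first half: making rigorous that the invariant cone field and the associated normally hyperbolic stable-manifold machinery extend across the rotation center $\zeta_0$, since the construction of $\varphi$ explicitly used the punctured surface $\widehat\Sigma\setminus\{\zeta_0\}$. The resolution should be that the $\widehat F$-invariant conformal metric on $\widehat\Sigma$ extends smoothly over $\zeta_0$ (the center of an irrational rotation is an ordinary point of the invariant metric, which is just the hyperbolic or flat metric pulled back by the rotation-linearizing chart), so $\varphi$ and the cone field extend continuously and positively, the partial hyperbolicity estimate holds uniformly on a neighborhood of $p_0$, and \cite{HPS1970} applies verbatim. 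The remaining steps — that the stable foliation gives a holomorphic retraction and that its image is therefore smooth — are routine once this is in place.
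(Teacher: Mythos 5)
Your proposal has two genuine gaps, one in each half, and also reverses the paper's logical order (you try to establish $p_0\in\Omega$ first and deduce $\zeta_0\notin C$ afterwards, whereas the paper assumes $\zeta_0\in C$ for contradiction and derives the absurdity directly).

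The first gap is the claim that $\varphi$ extends continuously and positively across $\zeta_0$. You correctly observe that the $\widehat F$-invariant conformal metric on $\widehat\Sigma$ is smooth and nondegenerate at the rotation center, but $\varphi$ is not that metric: it is $\varphi(z)=\|d_z\gamma^{-1}\|$, which measures the derivative of the local inverse of $\gamma$. If $\zeta_0\in C$ then $d_{\zeta_0}\gamma=0$, so $\|d_z\gamma^{-1}\|\to\infty$ as $z\to p_0$; the cones $\mathcal C_z$ open up to the whole tangent space and the invariance estimate degenerates. Thus the hypothesis you are trying to rule out is precisely the one that kills the extension, and the argument is circular. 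Even granting the cone field, the inference that the stable manifolds through points of $V$ near $p_0$ sweep out a full neighborhood of $p_0$ is not justified, since their diameters are not controlled near the cusp. The paper avoids all of this: it observes that $p_0$ is a fixed point of $F$, that $d_{p_0}F$ has one eigenvalue of modulus one tangent to the cusp direction (because $\widehat F$ is a rotation) and another of modulus strictly less than one (because $F$ is strictly volume decreasing), and builds a \emph{constant} local cone field from this eigenvalue splitting, entirely independently of $\varphi$.

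The second gap is the assertion that the local retraction along the stable foliation is holomorphic, so its image is a smooth manifold, contradicting the cusp. The geometric intuition — a cusp cannot be transversal to a foliation by disks with a single intersection per leaf — is exactly the one the paper exploits, but stable foliations obtained from graph-transform arguments are only continuous (Hölder at best), not holomorphic, and indeed the paper only claims ``a continuous Riemann surface foliation near the point $z$.'' The paper then makes your intuition rigorous via Rouch\'e's theorem: a local defining function $h$ for $\Sigma$ has a multiple zero at $p_0$ when restricted to $W^s(p_0)$, so by Rouch\'e the restriction of $h$ to a nearby leaf has at least two zeroes counted with multiplicity, but away from $p_0$ every zero of $h$ is simple, so nearby leaves meet $\Sigma$ in at least two distinct points — contradicting the fact that the rotation forces distinct leaves for distinct points of $\Sigma$. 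You would need to replace your ``holomorphic retraction'' step with an argument of this kind that only uses continuity of the foliation.
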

\begin{proof}
If $\widehat \Sigma$ has no center of rotation, then there is nothing to prove.
Suppose for the purpose of a contradiction that there is a  center of rotation $\zeta_0\in \widehat\Sigma$ and that $\zeta_0\in C$.

Then $\Sigma$ has a cusp at  $z:=\gamma(\zeta_0)$. Notice that $F(z)=z$. Since $\widehat F$ acts on $\widehat{\Sigma}$ as a rotation, it follows that the tangent direction of $\Sigma$ to $z$ is an eigenvector of $d_zF$ with eigenvalue $|\lambda_1|=1$. Since $F$ is strictly volume decreasing, the other eigenvalue $\lambda_2$ of $d_zF$ satisfies $0<|\lambda_2|<1$. Thus we obtain a forward invariant cone in $T_z(\mathbb C^2)$, centered at the line $T_z(\Sigma)$. Extending this cone to a constant cone field in a neighborhood of $z$, it follows that we obtain stable manifolds through $z$ and all nearby points in $\Sigma$, giving a continuous Riemann surface foliation near the point $z$.

Since $\widehat F$ acts on $\widehat\Sigma$ as a rotation, the stable manifolds through different points in $\Sigma$ must be distinct. However, this is not possible in a neighborhood of $z$. To see this, let $h$ be a locally defined holomorphic function such that $\Sigma$ equals the zero set of $h$ near $z$. We may assume that $h$ vanishes to higher order only at $z$. Now consider the restriction of $h$ to the stable manifold through $z$. Then $h$ has a multiple zero at $z$, hence by Rouch\'e's Theorem, the number of zeroes for nearby stable manifolds is also greater than one. But since nearby stable manifolds are transverse to $\Sigma$, and $h$ does not vanish to higher order in nearby points, it follows that the restriction of $h$ to nearby stable manifolds must have multiple single zeroes. Hence these nearby stable manifolds intersect $\Sigma$ in more than one point, giving a contradiction.

We conclude that $C=\varnothing$. As in the proof of the previous lemma it follows that $\Sigma$ is contained in the Fatou set, and thus in $\Omega$.
\end{proof}

\begin{corollary}
The map $g\colon \Omega\to \Sigma$ is a holomorphic retraction. In particular  $\Sigma$ is a closed smooth one-dimensional embedded submanifold.
\end{corollary}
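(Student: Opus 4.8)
The plan is to obtain the retraction directly from the normalization data $\gamma\colon\widehat\Sigma\to\C^2$ and $\widehat g\colon\Omega\to\widehat\Sigma$ constructed above, using that in the present case ($\widehat F$ conjugate to an irrational rotation) we have already shown $C=\varnothing$ and $I=\varnothing$, so $\gamma$ is an injective holomorphic immersion with $\gamma(\widehat\Sigma)=\Sigma\subset\Omega$, and $g=\gamma\circ\widehat g$. The whole statement will follow once we know that $\widehat g\circ\gamma=\mathrm{id}_{\widehat\Sigma}$.

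The first and main step is therefore to prove $\widehat g\circ\gamma=\mathrm{id}_{\widehat\Sigma}$. I would start from Remark~\ref{needrecurrent}, which gives an open set $N\subset\Omega$ on which $g$ restricts to the identity on $g(N)$. Since $\widehat g$ is open, $W:=\widehat g(N)$ is a nonempty open subset of $\widehat\Sigma$; and for $\zeta=\widehat g(y)\in W$ with $y\in N$ one has $\gamma(\zeta)=g(y)\in g(N)$, hence $\gamma\bigl(\widehat g(\gamma(\zeta))\bigr)=g(\gamma(\zeta))=\gamma(\zeta)$, so injectivity of $\gamma$ yields $\widehat g(\gamma(\zeta))=\zeta$. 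Thus the holomorphic self-map $\widehat g\circ\gamma$ of the connected Riemann surface $\widehat\Sigma$ agrees with the identity on $W$, and the identity theorem forces $\widehat g\circ\gamma=\mathrm{id}_{\widehat\Sigma}$ everywhere. I expect this propagation — upgrading the purely local statement of Remark~\ref{needrecurrent} to a global one on $\widehat\Sigma$ — to be the crux; the rest is bookkeeping.

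Granting this, I would finish as follows. For $p=\gamma(\zeta)\in\Sigma$ we get $g(p)=\gamma\bigl(\widehat g(\gamma(\zeta))\bigr)=\gamma(\zeta)=p$, so $g|_\Sigma=\mathrm{id}_\Sigma$; since $g$ is holomorphic and $g(\Omega)=\Sigma$ by definition, $g\colon\Omega\to\Sigma$ is a holomorphic retraction. In particular $\Sigma=\{x\in\Omega:g(x)=x\}$ is closed in $\Omega$. Moreover $\widehat g\circ\gamma=\mathrm{id}_{\widehat\Sigma}$ says that $\widehat g|_\Sigma$ is a continuous two-sided inverse of the bijection $\gamma\colon\widehat\Sigma\to\Sigma$, so $\gamma$ is a homeomorphism onto $\Sigma$ equipped with the subspace topology; being in addition a holomorphic immersion (here one uses $C=\varnothing$), $\gamma$ realizes $\Sigma$ as an embedded complex submanifold of $\Omega$ biholomorphic to $\widehat\Sigma$, hence smooth of dimension one. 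Combined with closedness, $\Sigma$ is a closed smooth one-dimensional embedded submanifold of $\Omega$. (Alternatively, the last point can simply be quoted as the standard fact that the image of a holomorphic retraction is a closed complex submanifold, its dimension being forced to be $1$ since $\gamma$ maps the Riemann surface $\widehat\Sigma$ onto it bijectively.)
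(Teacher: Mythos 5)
Your proof is correct. The paper's own argument is a one-liner: since $\Sigma\subset\Omega$, the restriction $F^{m_k}|_\Sigma$ converges both to $g|_\Sigma$ (as a restriction of the convergence on $\Omega$) and to $\mathrm{id}_\Sigma$ (because, transferring via $\gamma$, the lifts $\widehat F^{m_k}$ converge to the identity on $\widehat\Sigma$ — this was established inside the dichotomy lemma), so $g|_\Sigma=\mathrm{id}_\Sigma$. You reach the same equality by a different route: instead of invoking the dynamical convergence $\widehat F^{m_k}\to\mathrm{id}$, you use the purely local content of Remark~\ref{needrecurrent} together with injectivity of $\gamma$ to show $\widehat g\circ\gamma=\mathrm{id}_{\widehat\Sigma}$ on the open set $\widehat g(N)$, and then propagate by the identity theorem on the connected Riemann surface $\widehat\Sigma$. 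Both routes apply the identity theorem at the same structural point (globalizing a local fixed-point statement on $\widehat\Sigma$); yours works directly with the normalization maps $\gamma$ and $\widehat g$ and, as a bonus, delivers the embedding assertion more explicitly — $\gamma$ is a homeomorphism onto $\Sigma$ with inverse $\widehat g|_\Sigma$ and an immersion since $C=\varnothing$, so $\Sigma$ is an embedded closed complex curve — whereas the paper leaves this as an implicit consequence of $g$ being a holomorphic retraction. Your bookkeeping steps (injectivity of $\gamma$, $\Sigma\subset\Omega$, openness of $\widehat g$) all legitimately rely on results already established in the surrounding text, so the argument is complete.
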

\begin{proof}
On $\Sigma=g(\Omega)\subset \Omega$,
$$g=\lim_{k\to \infty} F^{m_k}={\rm Id},$$ which proves that $g$ is a holomorphic retraction.
\end{proof}

In the case where $\widehat{F}^j$ equals the identity, it follows that $F^j$ equals the identity on $\Sigma$. In this case we immediately get  stable manifolds transverse to $\Sigma$, which imply  as above that $g\colon \Omega\to \Sigma$ is a holomorphic  retraction.

\begin{lemma}
The retraction $g$ has constant rank $1$ on $\Omega$.
\end{lemma}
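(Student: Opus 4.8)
I would show that the \emph{rank--drop locus} $Z:=\{p\in\Omega:\,d_pg=0\}$ is empty. Since $g(\Omega)=\Sigma$ is a one--dimensional embedded submanifold, the differential $d_pg$ takes values in $T_{g(p)}\Sigma$, so $\mathrm{rk}(d_pg)\le 1$ for every $p$; hence $Z=\varnothing$ is exactly the assertion that $g$ has constant rank $1$. To get started I would record the two elementary facts about the rank. As just noted $\mathrm{rk}(d_pg)\le 1$ everywhere. On the other hand $g\colon\Omega\to\Sigma$ is a holomorphic retraction, so $g|_\Sigma=\mathrm{id}_\Sigma$; differentiating at $q\in\Sigma$ shows that $d_qg$ restricts to the identity on $T_q\Sigma$, hence $\mathrm{rk}(d_qg)=1$. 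Therefore $\Sigma$ is contained in the set $\mathcal U:=\{p\in\Omega:\,d_pg\neq 0\}$, which is open because $p\mapsto d_pg$ is holomorphic and ``some matrix entry is nonzero'' is an open condition; consequently $Z=\Omega\setminus\mathcal U$ is a closed subset of $\Omega$ that is disjoint from $\Sigma$.

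The second step is to observe that $Z$ is forward invariant under $F$. Differentiating the identity $g\circ F=F\circ g$ gives
\[
d_{g(p)}F\circ d_pg \;=\; d_{F(p)}g\circ d_pF .
\]
If $p\in Z$ the left--hand side vanishes, and since $F$ is an automorphism of $\C^2$ the map $d_pF$ is invertible, so $d_{F(p)}g=0$, i.e. $F(Z)\subseteq Z$.

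The final step derives a contradiction from $Z\neq\varnothing$. Pick $p\in Z$. Since $g=\lim_k F^{m_k}$ uniformly on compact subsets of $\Omega$, we have $F^{m_k}(p)\to g(p)$, and $g(p)\in g(\Omega)=\Sigma\subset\mathcal U$. As $\mathcal U$ is open, $F^{m_k}(p)\in\mathcal U$ for all large $k$; but $F^{m_k}(p)\in Z=\Omega\setminus\mathcal U$ by the forward invariance of $Z$ — a contradiction. Hence $Z=\varnothing$, so $d_pg\neq 0$ for every $p\in\Omega$, and combined with $\mathrm{rk}(d_pg)\le1$ this shows $g$ has constant rank $1$ on $\Omega$.

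\textbf{Expected difficulty.} The proof above is short, and the only real ``obstacle'' is recognizing that none of the finer structure built up so far is needed here: neither the dichotomy for $\widehat F$, nor the stable--manifold/normal hyperbolicity arguments, nor the (not yet established) convergence of every orbit to $\Sigma$. It suffices to combine the $F$--invariance of the rank--drop locus with the single fact that the limit map $g$ already carries every point into the good open set $\mathcal U\supset\Sigma$. A more hands--on alternative — trying to analyze a fiber $g^{-1}(q)$ through a hypothetical rank--$0$ point and rule out cusp or multiple--branch behaviour there — is also feasible but noticeably more delicate, so I would avoid it.
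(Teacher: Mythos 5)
Your proof is correct and relies on exactly the same two ingredients as the paper's argument: that $g$ has rank $1$ in a neighborhood of $\Sigma$ (since it is a retraction onto a curve), and the commutation identity $g\circ F=F\circ g$ combined with the fact that forward orbits eventually enter that neighborhood (here via $F^{m_k}(p)\to g(p)\in\Sigma$). The paper phrases this directly — pick $N$ with $F^N(x)$ near $\Sigma$ and push rank $1$ back through the automorphism $F^N$ — whereas you package it as forward invariance of the rank-drop locus $Z$ plus the disjointness $Z\cap\Sigma=\varnothing$; this is essentially the same proof in contrapositive form.
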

\begin{proof}
Since $g$ is a retraction  to the 1-dimensional manifold $\Sigma$, there exists a neighborhood $V$ of $\Sigma$ such that ${\rm rk}_x g=1$ for all $x\in V$. Let $x\in\Omega $ and $N\geq 0$ be such that  $y=F^N(x)\in V$. The result follows from the fact that $F^N$ has rank 2 and that
$$g\circ F^N(x)=F^N\circ g(x).$$

\end{proof}

%

\begin{remark}
The stable manifolds of the points in $\Sigma$ fill up a neighborhood of $\Sigma$. Since all orbits in the Fatou component $\Omega$ get close to $\Sigma$, this implies that
the stable manifolds of the points in $\Sigma$ fill up the whole $\Omega$. Thus for any limit map $h\colon \Omega\to \P^2$ we have that
$h(\Omega)\subset \Sigma.$
Moreover, by Lemma \ref{dichotomy}, the restriction $h|_{\Sigma}$ is an automorphism of $\Sigma$, and thus $h(\Omega)=\Sigma$.
Notice that for all $z\in \Sigma$ the fiber $g^{-1}(z)$ coincides with the stable manifold $W^s(z)$. Hence $h=h|_\Sigma\circ g.$
\end{remark}

\begin{remark} In the view of the contents of this section if  a Fatou component $\Omega$
is recurrent then  for every $z\in \Omega$  the orbit is relatively compact in $\Omega$.
\end{remark}

%


Now we investigate the complex structure of $\Omega$.
Notice that $\Sigma$, being an open Riemann surface, is Stein.
\begin{prop}
If some iterate $F^j|_\Sigma$ is the identity, then there exists a biholomorphism  $\Psi\colon \Omega\to \Sigma\times \C$ which conjugates the map $F^j$ to
$(z,w)\mapsto (z, \delta^j w).$
\end{prop}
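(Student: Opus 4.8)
The plan is to realize $\Omega$ as a (trivial) line‑bundle–type fibration over $\Sigma$ via the retraction $g$, after linearizing $F^j$ along the fibers. Throughout I set $G:=F^j$, so $G$ is an automorphism of $\C^2$ with constant Jacobian $\delta^j$, with $G|_\Sigma={\rm id}_\Sigma$ by hypothesis, and with $|\delta^j|<1$ (recall $|\delta|<1$ in the rank $1$ case). Write $W_z:=g^{-1}(z)$ for $z\in\Sigma$. By the results of this section $W_z$ is the stable manifold $W^s(z)$; it is invariant under both $G$ and $G^{-1}$ (since $g$ commutes with $F$, hence $g\circ G=g$ because $G|_\Sigma={\rm id}$); and $G|_{W_z}$ is an automorphism of the Riemann surface $W_z$ having $z$ as an attracting fixed point whose basin is \emph{all} of $W_z$, because every orbit in $\Omega$ tends to $\Sigma$ and orbits inside $W_z$ stay there, hence tend to $W_z\cap\Sigma=\{z\}$. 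First I would record two elementary facts: (i) the Koenigs coordinate of $G|_{W_z}$ at $z$ extends to a biholomorphism $W_z\to\C$ conjugating $G|_{W_z}$ to $\zeta\mapsto\delta^j\zeta$, so $W_z\cong\C$; and (ii) the normal multiplier of $G$ along $\Sigma$ is \emph{constantly} $\delta^j$: at $z\in\Sigma$ the differential $d_zG$ preserves the splitting $T_z\Sigma\oplus\ker d_zg$, is the identity on $T_z\Sigma$ (as $G|_\Sigma={\rm id}$), and has determinant $\delta^j$.

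Next I would construct a holomorphic function $\psi\colon\Omega\to\C$ with $\psi\circ G=\delta^j\psi$ that restricts on each $W_z$ to a nonzero multiple of the Koenigs coordinate; then $\Psi:=(g,\psi)\colon\Omega\to\Sigma\times\C$ will be the desired map. To obtain $\psi$ I would first fix a holomorphic ``reference'' function $\varphi$ on a neighborhood $V$ of $\Sigma$ in $\Omega$ vanishing to first order along $\Sigma$ and with nowhere vanishing transverse derivative along $\Sigma$. Such a $\varphi$ exists: since $\Sigma$ is an open Riemann surface it is Stein, hence by Siu's theorem admits a Stein open neighborhood $V$ in $\Omega$, and on $V$ Cartan's Theorem B lets one prescribe the $1$‑jet of $\varphi$ along $\Sigma$ to be any holomorphic section of the conormal bundle $N_\Sigma^\ast$; one chooses a nowhere vanishing such section, possible because every holomorphic line bundle over an open Riemann surface is trivial. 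Then I would set $\psi(x):=\lim_{n\to\infty}\delta^{-jn}\,\varphi(G^n(x))$. This limit exists locally uniformly on $\Omega$: indeed $G^n\to g$ locally uniformly (each orbit converges to its base point $g(x)\in\Sigma$, and $(G^n)$ is normal on the Fatou component $\Omega$), so the orbit of any compact set eventually enters $V$; and in local coordinates $(z,w)$ near $\Sigma$ adapted to $g$ (so $g$ is projection to $z$ and $\Sigma=\{w=0\}$), the constancy of the normal multiplier gives $G(z,w)=(z,\delta^j w+O(w^2))$, whence $\varphi\circ G-\delta^j\varphi=O(w^2)$, while the $w$‑coordinate along an orbit decays uniformly like $|\delta|^{jn}$; thus the increments $\delta^{-j(n+1)}\varphi\circ G^{n+1}-\delta^{-jn}\varphi\circ G^{n}$ are $O(|\delta|^{jn})$, a summable series. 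Consequently $\psi$ is holomorphic, $\psi\circ G=\delta^j\psi$ is immediate, $\psi$ vanishes on $\Sigma$, and the same computation shows the transverse derivative of $\psi$ along $\Sigma$ equals that of $\varphi$, hence is nowhere zero.

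Finally I would verify that $\Psi=(g,\psi)\colon\Omega\to\Sigma\times\C$ is a biholomorphism conjugating $F^j=G$ to $L\colon(z,w)\mapsto(z,\delta^j w)$. The conjugacy $\Psi\circ G=(g\circ G,\psi\circ G)=(g,\delta^j\psi)=L\circ\Psi$ is immediate. For bijectivity, fix $z\in\Sigma$: the restriction $\psi|_{W_z}$ conjugates $G|_{W_z}$ to $\zeta\mapsto\delta^j\zeta$, vanishes at $z$ with nonzero derivative, and is defined on the whole basin $W_z$, so it is a biholomorphism onto $\C$ (it agrees, up to the scalar given by its derivative at $z$, with the Koenigs coordinate); hence every $(z,w)\in\Sigma\times\C$ has a unique preimage in $W_z$, and $\Psi$ is a bijection. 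Since a holomorphic bijection between complex manifolds of the same dimension is automatically biholomorphic, this finishes the proof; alternatively one checks directly that $d\Psi$ is invertible at every point of $\Sigma$ — combining $dg|_{T\Sigma}={\rm id}$ with the non‑vanishing transverse derivative of $\psi$ — and transports this to all of $\Omega$ via $\Psi=L^{-n}\circ\Psi\circ G^n$.

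I expect the main obstacle to be the construction of the reference function $\varphi$: this is precisely the point where one invokes Siu's Stein‑neighborhood theorem together with Cartan's Theorem B and the triviality of line bundles over open Riemann surfaces. The rest is a parametrized Koenigs linearization, routine except that all estimates must be made \emph{uniform on compact subsets of $\Omega$}, which is exactly what the locally uniform convergence $G^n\to g$ and the constancy of the normal multiplier $\delta^j$ supply.
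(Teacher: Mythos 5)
Your proof is correct and follows essentially the same route as the paper: both construct the conjugacy as a parametrized Koenigs linearization, using Stein theory to produce a holomorphic transverse coordinate near $\Sigma$ and then rescaling its pushforwards under $F^{jn}$. The only cosmetic difference is that the paper packages the construction as a map into the line bundle $L=\ker dg|_\Sigma$ via the Docquier--Grauert tubular neighborhood theorem (Forstneri\v{c}, Prop.~3.3.2) and trivializes $L$ at the end, whereas you trivialize the conormal bundle up front (Cartan B on a Siu Stein neighborhood, plus triviality of line bundles over open Riemann surfaces) and work with a scalar function $\psi$ paired with $g$ --- an equivalent presentation that makes the invertibility of $d\Psi$ along $\Sigma$ a bit more explicit.
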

\begin{proof}
Let $L$ be the holomorphic line bundle on $\Sigma$ given by ${\rm Ker}\, dg$ restricted to $\Sigma$.  Since $\Sigma$ is Stein, there exists a neighborhood $U$ of $\Sigma$ and an injective holomorphic map $h\colon U\to L$ such that  for all $z\in \Sigma$  we have  that $h(z)=0_z$, and $h$ maps the fiber $g^{-1}(z)\cap U$ biholomorphically into a neighborhood of $0_z$ in the fiber $L_z$  \cite[Proposition 3.3.2]{forstnericbook}. We can assume that $d_z h|_{L_z}={\sf id}_{L_z}$.
Notice that the map $dF^j|_{L_z}\colon L_z\to L_z$ acts as multiplication by $\delta^j$.
The sequence $(  dF|_L^{-nj}  \circ h\circ  F^{nj})$  is eventually defined on compact subsets of $\Omega$ and  converges uniformly on compact subsets to a biholomorphism $\Psi\colon \Omega\to L$, conjugating $F^j$ to $dF^j|_L\colon L\to L$.
The line bundle $L$ is holomorphically trivial since $\Sigma$ is a Stein Riemann surface.

\end{proof}

If no iterate of $F$ is the identity on $\Sigma$, then $\Sigma$ is a biholomorphic to a circular domain $A$, and by the same proof as in
\cite[Proposition 6]{BS1991} there exists a biholomorphism from $\Omega$ to $A\times \C$ which conjugates the map $F$ to
$$(z,w)\mapsto (e^{i\theta}z, \frac{\delta}{e^{i\theta}} w).$$

\subsection{Transcendental H\'enon maps}

So far in this section we have not used that the maps we study here are of the form $F(z,w) = (f(z)-\delta w, z)$. Absent of any further assumptions we do not know how to use this special form to obtain a more precise description of recurrent Fatou components. We do however have the following consequence of Proposition \ref{prop:discrete} and Theorem \ref{thm:recurrent}.

\begin{corollary}\label{cor:Wiman}
Let $F$ be a transcendental H\'enon map, and assume that $f$ has order of growth strictly smaller than $\frac{1}{2}$. Then any recurrent periodic Fatou component $\Omega$ of rank $1$ must be the attracting basin of  a Riemann surface $\Sigma \subset \Omega$ which is biholomorphic to a circular domain, and $f$ acts on $\Sigma$ as an irrational rotation.
\end{corollary}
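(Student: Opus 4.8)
The plan is to combine Proposition~\ref{prop:discrete} with the rank~$1$ branch of Theorem~\ref{thm:recurrent}. So suppose $\Omega$ is a recurrent periodic Fatou component of rank $1$; after replacing $F$ by an iterate we may assume $\Omega$ is invariant, and Theorem~\ref{thm:recurrent} then produces the limit manifold $\Sigma \subset \Omega$, which is a closed one-dimensional submanifold that every orbit converges to, together with the holomorphic retraction $\rho \colon \Omega \to \Sigma$ and the dichotomy: either $F^j|_\Sigma = \mathrm{id}_\Sigma$ for some $j \in \N$, or $\Sigma$ is biholomorphic to a circular domain and $F|_\Sigma$ is conjugate to an irrational rotation. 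The statement to prove is exactly that, under the order-of-growth hypothesis, the first alternative cannot occur.

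The key step is therefore to rule out $F^j|_\Sigma = \mathrm{id}_\Sigma$. The point is that this would force $\mathrm{Fix}(F^j)$ to contain the whole one-dimensional manifold $\Sigma$, hence to be non-discrete. But $f$ has order of growth strictly less than $\frac12$, so Proposition~\ref{prop:discrete} tells us that $\mathrm{Fix}(F^k)$ is discrete for every $k \geq 1$, in particular for $k = j$. (One should be a little careful about bookkeeping: if the original component had period $p$ and we replaced $F$ by $F^p$ to make $\Omega$ invariant, then $F^{pj}|_\Sigma = \mathrm{id}$, and $\mathrm{Fix}(F^{pj})$ is again discrete by Proposition~\ref{prop:discrete} since the hypothesis on the order of growth is a hypothesis on $f$ itself and is unaffected by passing to iterates of $F$. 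Alternatively, work with the invariant component from the outset.) This contradiction eliminates the first alternative, so we are in the second: $\Sigma$ is biholomorphic to a circular domain and $F$ acts on it as an irrational rotation. The final sentence of the corollary, that $\Omega$ is the attracting basin of $\Sigma$, is immediate from Theorem~\ref{thm:recurrent}, which asserts that every orbit in $\Omega$ converges to $\Sigma$; together with the explicit normal form $\Omega \cong A \times \C$ with $F$ conjugate to $(z,w) \mapsto (e^{i\theta} z, \frac{\delta}{e^{i\theta}} w)$ and $|\delta| < 1$, one sees that $\Sigma = A \times \{0\}$ genuinely attracts.

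I do not expect a serious obstacle here: the corollary is essentially a one-line deduction once Proposition~\ref{prop:discrete} and Theorem~\ref{thm:recurrent} are in hand. The only thing requiring mild attention is the reduction to the invariant case and making sure the order-of-growth hypothesis still applies to whatever iterate of $F$ one ends up using — but since that hypothesis constrains $f$ and hence all $F^k$, this is automatic. The slightly subtler conceptual point, worth a sentence, is why $\Sigma$ having dimension exactly $1$ is consistent with the hypotheses: dimension $0$ would put us in the attracting-fixed-point case (not "rank $1$"), and dimension $2$ would force $|\delta| = 1$, which is allowed for a general automorphism but still falls under the rank-$2$ analysis rather than rank~$1$; the corollary simply isolates the rank-$1$ stratum and upgrades the dichotomy there to a single outcome.
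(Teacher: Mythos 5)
Your proof is correct and is precisely the intended deduction: the paper states the corollary without proof, simply describing it as a consequence of Proposition~\ref{prop:discrete} and Theorem~\ref{thm:recurrent}, and your argument — that $F^j|_\Sigma = \mathrm{id}_\Sigma$ would force $\Sigma \subset \mathrm{Fix}(F^j)$ (or $\mathrm{Fix}(F^{pj})$ after passing to the invariant iterate), contradicting the discreteness guaranteed by Proposition~\ref{prop:discrete} — is exactly the missing one-liner. The bookkeeping about periodicity and iterates that you flag is handled correctly.
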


%
%
%


\section{Baker domain}\label{sectionbaker}
A Baker domain for a transcendental function $f$ in $\C$  is a periodic Fatou component on which the iterates converge to the point $\infty$, an essential singularity for $f$. Notice that, in contrast, every polynomial $p$ in $\C$ can be extended to a rational function of $\P^1$ for which $\infty$ is a super-attracting fixed point.
The first example of a Baker domain was given by Fatou \cite{Fatou}, who considered the
function
\begin{equation}\label{bakerexample}
f(z) = z + 1 + e^{-z}
\end{equation}
and showed that the right half-plane $\mathbb{H}$ is contained in an invariant
Baker domain.

For holomorphic automorphisms of $\C^2$ the definition of essential singularity at infinity has to be adapted in order to be meaningful.

\begin{defn}
Let $F$ be a holomorphic automorphism of $\C^2$.
We call a point $[p:q:0]\in \ell^\infty$ an  \emph{ essential singularity at infinity} if for all $[s:t:0] \in \ell^\infty$ there exists  a sequence  $(z_n,w_n)$ of points in $\C^2$ converging to $[p:q:0]$ whose images $F(z_n,w_n)$ converge to $[s:t:0]$.
\end{defn}

\begin{remark}\label{essentialsingularity}
If $F$ is a transcendental H\'enon map, then any point in $\ell_\infty$ is an essential singularity at infinity.
To see this, it is in fact sufficient to consider only points on the line $\{(p\zeta, q\zeta)\}$ when $p \neq 0$. Define the entire function
$$
g(\zeta) = \frac{f(p \zeta) - f(0) - \delta q \zeta}{p \zeta}.
$$
Since $f$ is transcendental, the function $g$ must also be transcendental. Let $\zeta_n$ be a sequence of points for which $g(\zeta_n) \rightarrow \frac{s}{t}$. Then
$$
F(p\zeta_n, q\zeta_n) = (p\zeta_n \cdot g(\zeta_n) + f(0), p \zeta_n) \rightarrow [s:t:0].
$$
\end{remark}


Here we consider the iteration of a map on $\C^2$ analogous to (\ref{bakerexample}), namely the   transcendental
 H\'enon map
$$
F(z,w) := (e^{-z} + 2z - w, z),
$$
and we show that it admits an invariant Fatou component $U$ on which the iterates tend to the point  $[1:1:0]$  on the line at infinity.
\begin{remark}
Notice that by Remark \ref{essentialsingularity} the point $[1:1:0]$ is an essential singularity at infinity for the map $F$, and this implies that $F$ cannot be extended, even continuously, to the point $[1:1:0]$.
The situation is radically different   for a polynomial H\'enon map $H$, for which  the  \emph{ escaping set} $$I_\infty:=\{(z,w)\in \C^2\colon \|H^n(z,w)\|\to \infty\}$$ is a Fatou component on which all orbits converge to the point $[1:0:0]$, and the map $H$  extends to  holomorphic  self-map $\widehat H$ of $ \P^2\setminus\{[0:1:0]\}$ with a super-attracting fixed point at $[1:0:0]$.
\end{remark}

For a  transcendental function $f$ in $\C$ it is known that Baker domains are simply connected (proper) domains of $\C$ and  by results of Cowen \cite{Cowen} (see also \cite[Lemma 2.1]{berg}) the function $f$ is semi-conjugate to one of the following automorphisms:
\begin{enumerate}
\item  $z\mapsto \lambda z \in {\rm Aut}(\mathbb{H})$, where $\lambda>1$,
\item $ z\mapsto z\pm i \in {\rm Aut}(\mathbb{H})$,
\item $z\mapsto z+1 \in {\rm Aut}(\C)$.
\end{enumerate}
In our case we show that on the Fatou component $U$ the map $F$ is conjugate to the linear map $L\in {\rm Aut}(\{{\rm Re} (z-w)>0\})$ given by
$$
L(z,w) := (2z - w, z).
$$
We show that the  conjugacy maps $U$ onto the domain $\{{\rm Re} (z-w)>0\}$, proving that $U$ is biholomorphic to $\mathbb{H}\times \C$.

We begin by constructing an appropriate forward invariant domain $R$.
For each $\alpha>0$ we define the domain
$$
R_{\alpha} := \{(z,w) \, :    \Re z>\Re w + \alpha + \eta_\alpha(\Re w)\},
$$
where
$$
\eta_\alpha(x) := \frac{e^{-\alpha}}{1-e^{-\alpha}} \cdot e^{-x}:=A_\alpha \cdot e^{-x}.
$$
Notice that $\eta_\alpha>0$,  $\frac{A_\alpha}{1+A_\alpha}=e^{-\alpha}$, and that the domains $R_\alpha$ are not nested.

\begin{lemma}
Each domain $R_{\alpha}$ is forward invariant.
\end{lemma}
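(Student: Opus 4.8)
The plan is to show directly that $F(R_\alpha) \subset R_\alpha$ by tracking what happens to the "defining inequality" of $R_\alpha$ under one application of $F$. Write $(z',w') = F(z,w) = (e^{-z} + 2z - w,\, z)$, so that $\Re w' = \Re z$ and $\Re z' = \Re(e^{-z}) + 2\Re z - \Re w = e^{-\Re z}\cos(\Im z) + 2\Re z - \Re w$. Assume $(z,w) \in R_\alpha$, i.e. $\Re z - \Re w > \alpha + \eta_\alpha(\Re w) = \alpha + A_\alpha e^{-\Re w}$; I want to conclude $\Re z' - \Re w' > \alpha + \eta_\alpha(\Re w') = \alpha + A_\alpha e^{-\Re z}$. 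Substituting, $\Re z' - \Re w' = e^{-\Re z}\cos(\Im z) + \Re z - \Re w$, so the target inequality becomes
$$
e^{-\Re z}\cos(\Im z) + (\Re z - \Re w) > \alpha + A_\alpha e^{-\Re z}.
$$

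First I would record the bound $\cos(\Im z) \geq -1$, so $e^{-\Re z}\cos(\Im z) \geq -e^{-\Re z}$, reducing the problem to verifying
$$
(\Re z - \Re w) > \alpha + (1 + A_\alpha) e^{-\Re z}.
$$
By the hypothesis $(z,w)\in R_\alpha$ we have $\Re z - \Re w > \alpha + A_\alpha e^{-\Re w}$, so it suffices to show $A_\alpha e^{-\Re w} \geq (1+A_\alpha) e^{-\Re z}$, equivalently $\frac{A_\alpha}{1+A_\alpha} \geq e^{-\Re z + \Re w} = e^{-(\Re z - \Re w)}$. Now I would use the normalization noted just before the lemma, $\frac{A_\alpha}{1+A_\alpha} = e^{-\alpha}$, so the needed inequality is $e^{-\alpha} \geq e^{-(\Re z - \Re w)}$, i.e. $\Re z - \Re w \geq \alpha$ — which holds since $(z,w)\in R_\alpha$ forces $\Re z - \Re w > \alpha + \eta_\alpha(\Re w) > \alpha$ because $\eta_\alpha > 0$. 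Chaining these inequalities gives $(z',w') \in R_\alpha$.

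The only mild subtlety — and the one place to be careful rather than a genuine obstacle — is the bookkeeping of strict versus non-strict inequalities: the cosine bound $\cos(\Im z) \ge -1$ is not strict, but the slack coming from $\eta_\alpha(\Re w) > 0$ in the hypothesis is strict, so the final inequality is strict and $F(R_\alpha)$ lands in the open set $R_\alpha$. I expect no serious difficulty here; the lemma is essentially the elementary computation above, and the clever part is already baked into the choice of $\eta_\alpha$ and the identity $\frac{A_\alpha}{1+A_\alpha} = e^{-\alpha}$, which is exactly what makes the $e^{-\Re z}$ error term from $e^{-z}$ absorbable. I would present the argument as the short chain of inequalities, flagging the use of $\cos \geq -1$ and of the normalization of $A_\alpha$.
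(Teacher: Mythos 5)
Your proof is correct and follows essentially the same route as the paper's: bound $\Re(e^{-z}) \geq -e^{-\Re z}$, reduce the invariance claim to $A_\alpha e^{-\Re w} \geq (1+A_\alpha)e^{-\Re z}$, and close via the normalization $\frac{A_\alpha}{1+A_\alpha} = e^{-\alpha}$ together with $\Re z - \Re w > \alpha$. The only cosmetic difference is that you spell out the cosine explicitly whereas the paper writes $\Re(e^{-z_0}) \geq -e^{-\Re z_0}$ directly.
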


\begin{proof}
Let $(z_0,w_0)\in R_\alpha$.
We claim that $\Re z_1>\Re w_1 + \alpha + \eta_\alpha(\Re w_1)$.
Since $w_1=z_0$,  $$\Re z_1-\Re w_1=\Re e^{-z_0}+\Re z_0-\Re w_0>\alpha+\eta_\alpha(\Re w_0)+\Re (e^{-z_0})\geq \alpha+\eta_\alpha(\Re w_0)-e^{-\Re z_0}.$$
Hence the claim follows if we show that $$\eta_\alpha(\Re w_0)\geq \eta_\alpha(\Re z_0)+ e^{-\Re z_0}.$$
This is the same as $A_\alpha e^{-\Re w_0}\geq  (1+A_\alpha)e^{-\Re z_0}$, or equivalently $e^{-\alpha}\geq e^{-\Re z_0+\Re w_0}$. The latter is satisfied because $\Re z_0-\Re w_0\geq \alpha$ (since $\eta_\alpha$ is always positive).
\end{proof}

We immediately obtain the following.

\begin{corollary}
The domain $$
R := \bigcup_{\alpha>0} R_\alpha
$$ is forward invariant.
\end{corollary}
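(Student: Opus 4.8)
The plan is to obtain the corollary as an immediate consequence of the preceding lemma, exploiting the elementary fact that a union of forward invariant sets is forward invariant. First I would unwind the definition: a point $(z,w)$ lies in $R$ precisely when $(z,w)\in R_{\alpha}$ for some $\alpha>0$. Applying $F$ and invoking the lemma, $F(z,w)\in R_{\alpha}$, and since $R_{\alpha}\subseteq R$ we conclude $F(z,w)\in R$. As $(z,w)$ ranges over $R$ this gives $F(R)\subseteq R$, which is exactly the assertion that $R$ is forward invariant. That is the whole argument; the only bookkeeping is to remember that the witnessing $\alpha$ depends on the chosen point but is unaffected by a single application of $F$ (because each individual $R_\alpha$ is invariant), so no uniformity in $\alpha$ is ever needed.

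The one subsidiary point worth recording is that $R$ genuinely is a \emph{domain}. Openness is free: $R=\bigcup_{\alpha>0}R_{\alpha}$ is a union of the open sets $R_{\alpha}$, each cut out by the strict inequality $\Re z>\Re w+\alpha+\eta_{\alpha}(\Re w)$ with $\eta_{\alpha}$ continuous. For connectedness I would observe that $R=\{(z,w):\Re z>\Re w+\psi(\Re w)\}$, where $\psi(t):=\inf_{\alpha>0}\bigl(\alpha+\eta_{\alpha}(t)\bigr)$ and $\eta_\alpha(t)=\frac{e^{-\alpha}}{1-e^{-\alpha}}e^{-t}$: for each fixed $w$ the horizontal slice $\{z:\Re z>\Re w+\psi(\Re w)\}$ is a right half-plane, hence connected, and since enlarging $\Re z$ keeps a point inside $R$, any two points of $R$ can be pushed into the region where $\Re z$ is large and joined there. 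I do not anticipate any real obstacle: the content of the statement is carried entirely by the lemma on invariance of the $R_\alpha$, and everything else is routine verification.
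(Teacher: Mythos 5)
Your argument is exactly the paper's (implicit) one: a union of forward-invariant sets is forward invariant, applied to the family $R_\alpha$ from the preceding lemma. The side remark that $R$ is open and connected, while not spelled out in the paper, is also correct and harmless.
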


\begin{remark}\label{driftremark}
Let $(z_0,w_0)\in R_\alpha$. Since $w_1=z_0$ we have that
\begin{equation}\label{drift}
\Re z_1-\Re z_0> \alpha+\eta_\alpha(\Re z_0).
\end{equation}
It easily follows that then $\Re z_n>\Re z_0 +n\alpha$, and that $\Re w_n>\Re z_0 +(n-1)\alpha\ra\infty$.
\end{remark}

\begin{lemma}
All the orbits in $R$ converge uniformly on compact subsets to the point $[1:1:0]$ on the line at infinity.
\end{lemma}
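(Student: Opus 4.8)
The plan is to show that every orbit in $R$ converges, uniformly on compact subsets, to $[1:1:0]\in\ell^\infty$, by tracking the three homogeneous coordinates $[z_n:w_n:1]$ and showing the last one becomes negligible while the ratio $z_n/w_n$ tends to $1$. Recall from Remark \ref{driftremark} that for a point $(z_0,w_0)\in R_\alpha$ we have $\Re z_n > \Re z_0 + n\alpha$ and $\Re w_n = \Re z_{n-1} > \Re z_0 + (n-1)\alpha \to \infty$; in particular both $|z_n|\to\infty$ and $|w_n|\to\infty$, so in the chart where the third coordinate is $1$ the point $[z_n:w_n:1]$ leaves every compact subset of $\C^2$. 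To land at $[1:1:0]$ rather than some other point of $\ell^\infty$ we must control the ratio $z_n/w_n = z_n/z_{n-1}$.

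First I would estimate this ratio. Since $z_{n} = e^{-z_{n-1}} + 2 z_{n-1} - w_{n-1} = e^{-z_{n-1}} + 2 z_{n-1} - z_{n-2}$, we get
$$
\frac{z_n}{z_{n-1}} = 2 - \frac{z_{n-2}}{z_{n-1}} + \frac{e^{-z_{n-1}}}{z_{n-1}}.
$$
Because $\Re z_{n-1}\to\infty$ uniformly (indeed $\Re z_{n-1} > \Re z_0 + (n-1)\alpha$ on $R_\alpha$, with local uniformity in the starting point over a compact subset of $R$), the term $e^{-z_{n-1}}/z_{n-1}\to 0$ uniformly. Setting $t_n := z_n/z_{n-1}$ this reads $t_n = 2 - 1/t_{n-1} + \varepsilon_{n-1}$ with $\varepsilon_{n-1}\to 0$. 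The map $t\mapsto 2 - 1/t$ has $t=1$ as its unique (parabolic) fixed point; one checks that the real part drift $\Re z_n - \Re z_{n-1} > \alpha$ keeps $t_n$ in a region where iteration of $2-1/t$ converges to $1$, so that $t_n\to 1$. Hence $z_n/w_n = z_n/z_{n-1}\to 1$.

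Next I would pass to projective coordinates. Writing $[z_n:w_n:1] = [z_n/w_n : 1 : 1/w_n]$ and using $z_n/w_n\to 1$ together with $1/w_n\to 0$ (both uniformly on compact subsets of $R$, as the estimates above are locally uniform), the Fubini--Study distance from $[z_n:w_n:1]$ to $[1:1:0]$ tends to $0$, which is exactly the assertion. I should make sure the passage from "convergence of real parts is locally uniform" to "the whole estimate is locally uniform" is clean: a compact subset $K\subset R$ meets only finitely many... actually is contained in $R_\alpha$ for $\alpha$ small enough depending on $K$ (since the $R_\alpha$ exhaust $R$ and are open), so all bounds above hold with one fixed $\alpha$ on all of $K$, giving genuine uniform convergence on $K$.

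The main obstacle is the analysis of the scalar recursion $t_n = 2 - 1/t_{n-1} + \varepsilon_{n-1}$: one must show it converges to the parabolic fixed point $t=1$ despite the perturbations $\varepsilon_{n-1}$, and do so uniformly. I expect the cleanest route is to avoid $t_n$ altogether and instead work directly with the difference $\zeta_n := z_n - w_n = z_n - z_{n-1}$, for which the recursion is $\zeta_n = \zeta_{n-1} + e^{-z_{n-1}}$ with $\Re\zeta_{n-1} > \alpha$; then $\zeta_n$ increases in real part but the increments $e^{-z_{n-1}}$ are summable (dominated by $\sum e^{-\Re z_0 - (n-1)\alpha}$), so $\zeta_n$ converges to a finite limit $\zeta_\infty$ with $\Re\zeta_\infty \ge \alpha$. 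Since $z_n\to\infty$ while $z_n - w_n$ stays bounded, $[z_n:w_n:1] = [z_n : z_n - \zeta_n : 1]\to[1:1:0]$ immediately, and the convergence is uniform on each $R_\alpha$ hence locally uniform on $R$. This reformulation turns the delicate parabolic-dynamics step into an elementary summability estimate, and I would organize the proof around it.
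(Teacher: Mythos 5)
Your final reformulation (tracking $\zeta_n := z_n - w_n$, noting $\zeta_{n+1} = \zeta_n + e^{-z_n}$, and summing the geometrically-decaying increments $e^{-\Re z_j} \le e^{-\Re z_0 - j\alpha}$) is exactly the paper's proof, and your earlier detour through the ratio $t_n = z_n/z_{n-1}$ and its parabolic fixed point is wisely discarded.

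One small slip to fix: you claim a compact $K\subset R$ is contained in a single $R_\alpha$ "since the $R_\alpha$ exhaust $R$ and are open," but the paper explicitly remarks that the domains $R_\alpha$ are \emph{not} nested (as $\alpha\to 0$ the linear term $\alpha$ shrinks but $\eta_\alpha(\Re w)$ blows up), so the exhaustion is not monotone. The paper sidesteps this by proving the estimate for compact subsets of each fixed $R_\alpha$; to deduce locally uniform convergence on all of $R$ one should cover $K$ by finitely many of the open sets $R_\alpha$ and partition $K$ into finitely many compact pieces each lying inside one $R_{\alpha_i}$, then take the maximum of the finitely many bounds. With that repair your argument is complete.
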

\begin{proof}
Let $K$ be a compact subset of $R_\alpha$ for some $\alpha>0$. Let $M>0$. Then by Remark \ref{driftremark} we have that $|z_n|$ and $|w_n|$ converge to $\infty$ uniformly on $K$.
Moreover, since for all $n\geq 0$ we have that  $z_{n+1}-w_{n+1}=z_n-w_n+e^{-z_n},$ it follows that
$$z_n-w_n= z_0-w_0 +\sum_{j=0}^{n-1} e^{-z_j},$$ and thus $\frac{z_n}{w_n}$ converges to 1 uniformly on $K$.

\end{proof}

The domain $R$ is therefore contained in an invariant Fatou component $U$. Our next goal is to show that $R$ is an absorbing domain for $U$, i.e.
$$
U = A := \bigcup_{n \in \mathbb N} F^{-n}(R).
$$
It is immediate that $A$ is contained in $U$. In order to show that $U$ is not larger than $A$, we will for the first time use the plurisubhamonic method referred to in the introduction.

\begin{defn}
Define the sequence of pluriharmonic functions $(u_n\colon U\to \R)_{n\geq 1}$ by
$$
u_n(z_0,w_0) := \frac{-\Re(z_n)}{n}.
$$
\end{defn}

\begin{lemma}
The functions $u_n$ are  uniformly bounded from above on compact subsets of $U$, and
$$
\limsup_{n\to +\infty} u_n\leq 0.
$$

\end{lemma}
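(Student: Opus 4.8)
The plan is to rewrite $u_n$ in a form that makes the decay transparent, and thereby reduce the whole statement to the fact that $|z_n|$ grows subexponentially on $U$. First I would upgrade the previous lemma from $R$ to all of $U$: since $U$ is a Fatou component, $(F^n)$ is normal on $U$, and if $F^{n_k}\to h$ uniformly on compact subsets of $U$ then $h\equiv[1:1:0]$ on the open set $R$; arguing exactly as in the proof of Lemma \ref{projectiveimpliesintrinsic} (Hurwitz in an affine chart with $\ell^\infty=\{z_1=0\}$) this forces $h(U)\subset\ell^\infty$, so $h\colon U\to\ell^\infty\cong\P^1$ is holomorphic and constant on the open set $R$, hence constant on the connected set $U$; thus $h\equiv[1:1:0]$. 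As all subsequential limits agree, $F^n\to[1:1:0]$ locally uniformly on $U$. Writing $F^n(z_0,w_0)=(z_n,w_n)$, so that $w_{n+1}=z_n$, and reading this convergence in the affine chart $[a:b:c]\mapsto(b/a,c/a)$ around $[1:1:0]$, I get that $1/z_n\to 0$ and $z_{n+1}/z_n=z_{n+1}/w_{n+1}\to 1$, both locally uniformly on $U$.

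Next I would deduce the growth bound. On a fixed compact $K\subset U$ there is $N$ with $z_n\neq 0$ on $K$ for $n\geq N$, and then $\log|z_{n+1}|-\log|z_n|=\log|z_{n+1}/z_n|\to 0$ uniformly on $K$; a Cesàro average gives $\frac1n\log|z_n|\to 0$ uniformly on $K$, hence also $\frac1n\log\!\bigl(\max_{|j-n|\le 1}|z_j|\bigr)\to 0$ uniformly on $K$. Finally I would use the special form of $F$: here $\delta=1$ and $f(z)=e^{-z}+2z$, so the first-coordinate recursion $z_{n+1}=f(z_n)-z_{n-1}$ reads $e^{-z_n}=z_{n+1}-2z_n+z_{n-1}$, and since $e^{-z_n}$ never vanishes, $u_n=-\Re z_n/n=\frac1n\log|e^{-z_n}|$. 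Therefore
\[
u_n=\frac1n\log\bigl|z_{n+1}-2z_n+z_{n-1}\bigr|\le\frac1n\log\!\Bigl(4\max_{|j-n|\le 1}|z_j|\Bigr),
\]
whose right-hand side, by the previous step, is at most $\epsilon$ on $K$ for $n\ge N_{K,\epsilon}$; for the finitely many remaining $n$, $\sup_K u_n<\infty$ by continuity. This gives the uniform upper bound on $K$, and taking $K=\{(z_0,w_0)\}$ and letting $\epsilon\to 0$ gives $\limsup_{n}u_n\le 0$.

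The one genuine point is controlling the growth of $|z_n|$ on $U$: the recursion by itself only yields $|z_{n+1}|\le 2|z_n|+|z_{n-1}|+e^{-\Re z_n}$, which is useless when $\Re z_n$ is very negative, so a direct attack is circular. What saves the day is that on $U$ one gets $z_{n+1}/z_n\to 1$ \emph{for free} from $F^n\to[1:1:0]$, which rules out exponential growth of $|z_n|$ and makes the $\frac1n\log(\cdots)$ terms vanish in the limit. The other step deserving care is precisely the propagation of the convergence $F^n\to[1:1:0]$ from $R$ to the whole component $U$, which rests on the identity principle for holomorphic maps into $\ell^\infty\cong\P^1$; once that is in place, the rest is routine estimation.
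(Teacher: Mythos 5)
Your proof is correct, and it takes a genuinely different (and arguably cleaner) route than the paper's. The paper also starts from $F^n\to[1:1:0]$ locally uniformly on $U$, but uses it only to get a crude geometric bound $\max\{|z_n|,|w_n|\}\le C(1+\epsilon)^n$ on a compact $K$; it then argues by contradiction, assuming $u_{n(k)}\ge\beta>0$ along a sequence, which forces $|e^{-z_{n(k)}}|\ge e^{n(k)\beta}$ and hence makes $|z_{n(k)+1}|$ overshoot the geometric bound once $\epsilon$ is chosen with $e^\beta>1+\epsilon$. You instead extract the sharper asymptotic $\frac1n\log|z_n|\to 0$ uniformly on $K$ via the telescoping/Ces\`aro step from $z_{n+1}/z_n\to 1$, and then exploit the exact identity $e^{-z_n}=z_{n+1}-2z_n+z_{n-1}$ to write $u_n=\frac1n\log|z_{n+1}-2z_n+z_{n-1}|$, after which the estimate is a one-liner with no contradiction needed. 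The underlying mechanism is the same in both arguments (convergence to $[1:1:0]$ forbids exponential growth of $z_n$, so $\Re z_n$ cannot be too negative too often), but your direct reformulation of $u_n$ through the recursion makes the upper bound transparent; the paper's version is slightly more self-contained in that it does not invoke the Ces\`aro lemma, at the cost of a proof by contradiction. One small stylistic remark: the paper simply asserts that on $K\subset U$ the iterates converge to $[1:1:0]$, whereas you supply the identity-principle argument propagating the constant limit from $R$ to $U$; including that justification is a welcome addition rather than a deviation.
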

\begin{proof}

Let $K$ be a compact subset of $U$. Since $U$ is a Fatou component, on $K$ the orbits converge uniformly to $[1:1:0]$.
So for every $\epsilon>0$ there exists $n_\epsilon\geq 0$ such that
\begin{equation}
\left|\frac{z_n}{z_{n-1}}\right|=\left|\frac{z_n}{w_n}\right|<1+\epsilon,
\end{equation}
for all $n>n_\epsilon$ and for all $(z_0,w_0)\in K$.
It follows that for every $\epsilon>0$ there exists $C = C_\epsilon$ such that
\begin{equation}\label{zwartepiet}
{\rm max}\{{|z_n|,|w_n|}\} \le C \cdot (1+\epsilon)^n
\end{equation}
for  every $n \in \mathbb N$ and for all   $(z_0,w_0)\in K$. Let $\beta >0$. We claim that there exists an $N>0$ so that $u_n < \beta$ on $K$ for every $n \ge N$. To prove this claim, let us suppose by contradiction that
there exist a sequence $(z_0^k,w_0^k)_{k\in \N}$ in $K$ and a strictly increasing sequence $(n(k))_{k\in\N}$ in $\N$ such that
$$u_{n(k)}(z^k_0, w^k_0) \ge \beta$$
for all $k \in \mathbb N$. Then $\mathrm{Re}(z^k_{n(k)}) \le - n(k) \cdot \beta$. It follows that
$$
\begin{aligned}
|z^k_{n(k)+1}| & = |e^{-z^k_{n(k)}} + 2 z^k_{n(k)} - w^k_{n(k)}|\\
 & \ge |e^{-z^k_{n(k)}}| - |2z^k_{n(k)} - w^k_{n(k)}|\\
 & \ge e^{n(k) \cdot \beta} - 3C(1+\epsilon)^{n(k)}.
\end{aligned}
$$
By taking $\epsilon>0$ sufficiently small so that $e^\beta > (1+\epsilon)$, we obtain, for $k$ sufficiently large,
$$
|z^k_{n(k)+1}| > C (1+\epsilon)^{n(k)+1},
$$
 which contradicts (\ref{zwartepiet}).

The claim implies that there is a uniform bound from above for the functions $u_n$ on $K$, and that
$$
\limsup_{n \rightarrow \infty} u_n \le 0,
$$
which completes the proof.
\end{proof}

\begin{lemma}\label{lemma:negative}
Let $H$ be a compact subset of $A$. Then there exists $\gamma>0$ such that on $H$,
$$\limsup_{n\to +\infty} u_n\leq -\gamma.$$
\end{lemma}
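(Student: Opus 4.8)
The plan is to exploit the ``drift'' estimate from Remark \ref{driftremark} together with the forward invariance of the domains $R_\alpha$. Since $H$ is a compact subset of $A = \bigcup_n F^{-n}(R)$, by compactness there exists a single $m \in \N$ such that $F^m(H)$ is a compact subset of $R$, and in fact $F^m(H) \subset R_\alpha$ for some $\alpha > 0$ (the domains $R_\alpha$ exhaust $R$, and a compact set meets only finitely many, so it lies in one of them for $\alpha$ small enough). Set $\gamma := \alpha/2 > 0$; I expect this to be the right value, possibly after shrinking.

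First I would record the key inequality. If $(z_0,w_0) \in H$, write $(z_j, w_j) = F^j(z_0,w_0)$, so that $(z_m, w_m) \in R_\alpha$. By Remark \ref{driftremark}, once an orbit enters $R_\alpha$ it satisfies $\Re z_{n} > \Re z_{m} + (n-m)\alpha$ for all $n \geq m$; more precisely, each step increases $\Re z$ by more than $\alpha$. Since $H$ is compact, $\Re z_m$ is bounded below on $H$, say $\Re z_m \geq -M$ for some constant $M = M(H) > 0$. Hence for all $n > m$ and all $(z_0,w_0) \in H$,
$$
\Re z_n \;>\; \Re z_m + (n-m)\alpha \;\geq\; -M + (n-m)\alpha.
$$
Therefore
$$
u_n(z_0,w_0) \;=\; \frac{-\Re z_n}{n} \;<\; \frac{M - (n-m)\alpha}{n} \;=\; -\alpha + \frac{M + m\alpha}{n}.
$$

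Taking $\limsup$ as $n \to \infty$, the error term $\frac{M+m\alpha}{n}$ tends to $0$ uniformly on $H$, and we obtain
$$
\limsup_{n \to +\infty} u_n \;\leq\; -\alpha \;<\; -\gamma
$$
on $H$ with, say, $\gamma = \alpha/2$. This proves the lemma. I do not anticipate a serious obstacle: the only point requiring a little care is the reduction to a single $\alpha$, which uses that the $R_\alpha$ are not nested (Remark preceding the invariance lemma) but still exhaust $R$ — one should check that any compact $K \subset R$ lies in some $R_\alpha$, which follows because the defining inequality $\Re z > \Re w + \alpha + \eta_\alpha(\Re w)$ holds on $K$ with a uniform gap, and $\alpha + \eta_\alpha(x) \to 0$ as $\alpha \to 0$ for $x$ in the (bounded) range of $\Re w$ over $K$. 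The rest is the elementary arithmetic of the drift bound.
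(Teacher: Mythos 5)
The overall strategy is sound, but there is a concrete error in the reduction step. You claim that for a compact $K\subset R$ one can find a single $\alpha>0$ with $K\subset R_\alpha$, and you justify this by asserting that ``$\alpha+\eta_\alpha(x)\to 0$ as $\alpha\to 0$'' for $x$ in a bounded range. That limit is wrong: since $\eta_\alpha(x)=A_\alpha e^{-x}$ with $A_\alpha=\frac{e^{-\alpha}}{1-e^{-\alpha}}\to+\infty$ as $\alpha\to 0^+$, one has $\alpha+\eta_\alpha(x)\to+\infty$, not $0$. This is exactly the reason the paper warns that the $R_\alpha$ are \emph{not} nested, and it is not a cosmetic point: for a fixed $x=\Re w$, the infimum over $\alpha$ of $\alpha+\eta_\alpha(x)$ is a strictly positive quantity that grows as $x\to-\infty$, so a compact arc in $R$ whose $\Re w$ ranges over a long interval can require disjoint ranges of admissible $\alpha$ at its two ends and thus sit in no single $R_\alpha$. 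Your reduction to one $\alpha$ therefore fails.

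The fix is what the paper does: use compactness of $H$ to extract a \emph{finite} subcover $H\subset\bigcup_{i=1}^k F^{-n_i}(R_{\alpha_i})$ (allowing both the $n_i$ and the $\alpha_i$ to vary), apply your drift computation separately on each $F^{-n_i}(R_{\alpha_i})$ to get $\limsup u_n\le-\alpha_i$ there, and take $\gamma=\min_i\alpha_i$. Your use of a single $m$ (via the increasing family $F^{-n}(R)$) is correct but inessential; the part that genuinely cannot be collapsed to a single index is $\alpha$. The arithmetic with Remark~\ref{driftremark} and the lower bound on $\Re z_m$ over the compact set is fine once the cover is corrected.
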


\begin{proof}
Let $K$ be a compact subset of $R_\alpha$ for some $\alpha>0$. Then by Remark \ref{driftremark} we have,
$$
u_n(z_0,w_0)<-\frac{\Re z_0}{n}-\alpha, \quad \forall\, (z_0,w_0)\in K, n\geq 1,
$$
which implies that $\limsup_{n\to +\infty} u_n(z_0,w_0)\leq -\alpha$ for all $(z_0,w_0)\in K$.

Let now $H$ be a compact subset in $A$. Then there exist $\alpha_1,\dots,\alpha_k>0$ and $n_1,\dots, n_k\in \N$ such that
$$H\subset F^{-n_1}(R_{\alpha_1})\cup\dots \cup  F^{-n_k}(R_{\alpha_k}).$$
Thus, on $H$, $$\limsup_{n\to +\infty} u_n\leq {\rm max}\{-\alpha_1,\dots ,-\alpha_k\}.$$

\end{proof}

\begin{lemma}\label{lemma:zero}
On $U\setminus A$, $
\limsup_{n\to +\infty} u_n = 0.
$
\end{lemma}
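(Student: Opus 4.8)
The plan is to combine the two previous lemmas with the plurisubharmonic (submean value) method advertised in the introduction. Suppose, for the purpose of a contradiction, that there is a point $p \in U \setminus A$ at which $\limsup_{n\to\infty} u_n(p) < 0$. The key object will be the upper regularization of the $\limsup$ of the family $(u_n)$: one takes $u := \limsup_{n\to\infty} u_n$ and then its upper semicontinuous regularization $u^*$. Since each $u_n$ is pluriharmonic, hence plurisubharmonic, and the $u_n$ are locally uniformly bounded above on $U$ (by the lemma preceding Lemma \ref{lemma:negative}), a standard theorem on plurisubharmonic functions guarantees that $u^*$ is plurisubharmonic on $U$, that $u^* \le 0$ on all of $U$, and that $u^* = u$ outside a pluripolar (in particular, measure-zero) set. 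So $u^*$ is a genuine plurisubharmonic function on $U$ with $u^* \le 0$ everywhere.

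First I would record what the two previous lemmas give in these terms. By the lemma before Lemma \ref{lemma:negative} we already know $u^* \le 0$ on $U$. By Lemma \ref{lemma:negative}, on the open set $A$ (which is a union of the open sets $F^{-n}(R_\alpha)$) we have $u \le -\gamma < 0$ locally, hence $u^* \le 0$ there too but, more importantly, $u^* < 0$ on a large portion of $A$. The essential point is that $u^*$ cannot attain the value $0$ at an interior point without being identically $0$, by the maximum principle for plurisubharmonic functions — but on $A$ it is strictly negative, so if $U \setminus A$ were empty of points where $\limsup u_n = 0$, i.e. if $u^* < 0$ somewhere dense, we would be forced to conclude $u^* \equiv $ (something $< 0$)? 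That is not quite the contradiction yet; the cleaner route is the following.

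The core step: suppose $q \in U \setminus A$ and $\limsup_n u_n(q) = -2\eta < 0$. Pick a small closed ball $\bar B(q,r) \subset U$. On this ball, $u_n$ is pluriharmonic, so it satisfies the mean value property over $\partial B(q,r)$ (or over the ball). Now split $\bar B(q,r)$: the set $A$ is open and forward-invariant with the property that its backward iterates exhaust $U$; using the structure of $A = \bigcup_n F^{-n}(R)$ one shows that $A \cap B(q,r)$ has positive Lebesgue measure — indeed $A$ is dense in $U$, because $R$ absorbs every compact orbit and every point of $U$ has an orbit that enters $R$ (this is where one uses that all orbits in $U$ converge to $[1:1:0]$, hence eventually have $\Re z_n$ large and the drift estimate of Remark \ref{driftremark} places them in some $R_\alpha$, so every point of $U$ lies in $A$ — wait, that would give $U = A$ directly). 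Let me instead phrase the contradiction as: on $A$, $\limsup u_n \le -\gamma$ locally by Lemma \ref{lemma:negative}, while applying the submean value inequality to $u_n$ at $q$ and taking $\limsup$ (using Fatou's lemma / reverse Fatou with the uniform upper bound), $0 = \limsup_n u_n(q)$ would be $\le$ the average of $\limsup_n u_n$ over a sphere, which meets $A$ in a set of positive measure where the integrand is $\le -\gamma$, while elsewhere it is $\le 0$; hence the average is strictly negative, contradicting $\limsup_n u_n(q) = 0$. Therefore no such $q$ exists, i.e. $\limsup_n u_n = 0$ on $U \setminus A$.

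\textbf{Main obstacle.} The delicate point is passing the submean value inequality to the limit: each $u_n$ satisfies $u_n(q) \le \fint_{\partial B} u_n$, but $\limsup$ does not commute with integration in the favourable direction automatically. The correct tool is the reverse Fatou lemma, which applies precisely because the family $(u_n)$ is uniformly bounded above on the compact $\bar B(q,r) \subset U$ (the lemma preceding Lemma \ref{lemma:negative}); this yields $\limsup_n \fint_{\partial B} u_n \le \fint_{\partial B} \limsup_n u_n$. Combined with $u_n(q) \le \fint_{\partial B} u_n$ this gives $\limsup_n u_n(q) \le \fint_{\partial B} \limsup_n u_n$. The second ingredient needed is that $\partial B(q,r) \cap A$ — or, more safely, $B(q,r) \cap A$ after also averaging over radii — has positive measure; this follows since $q \in U = \overline{A}$? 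No: one needs $A$ to be dense in $U$, which is exactly what is being proved in the surrounding argument, so to avoid circularity I would only use that $A$ is open and nonempty and that $q \in \partial A \cap U$ is the relevant case (if $q$ is in the interior of $U \setminus A$ a separate, easier argument via $F$-invariance of that open set applies), so every ball around $q$ meets $A$ in an open, hence positive-measure, set. This is the crux; the rest is the routine plurisubharmonic machinery.
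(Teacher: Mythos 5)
Your proposal misidentifies where the plurisubharmonic/submean-value machinery enters: that argument is the paper's proof of the \emph{subsequent} statement, Proposition~\ref{absorbing} (``$R$ is absorbing''), which takes Lemmas~\ref{lemma:negative} and~\ref{lemma:zero} as inputs and then produces a contradiction at boundary points $\zeta \in \partial A \cap U$. Lemma~\ref{lemma:zero} cannot be proved by that same method without circularity, and your own draft runs into exactly this: to make the submean-value inequality bite you need the ball around $q$ to meet $A$ in a set of positive measure, which already presupposes $q\in\overline A$, i.e.\ density of $A$ in $U$, which is essentially what Proposition~\ref{absorbing} is trying to establish. You flag this yourself and defer the case of $q$ interior to $U\setminus A$ to ``a separate, easier argument via $F$-invariance,'' but that argument is never given. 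There is also a sign confusion in the core step: you begin by assuming $\limsup_n u_n(q)<0$ and then derive a contradiction from ``$0=\limsup_n u_n(q)\le$ the average,'' so it is not clear which hypothesis is being contradicted.

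The paper's proof is short, purely dynamical, and uses no pluripotential theory. Suppose $(z_0,w_0)\in U$ with $\limsup_n u_n(z_0,w_0)<0$; then there are $\alpha>0$ and $N$ with $\Re z_n\ge \alpha n$ for all $n\ge N$. Since $\Re z_n\to\infty$, one has $\eta_{\alpha/3}(\Re z_n)<\alpha/3$ for all large $n$. Moreover, because the averages $\Re z_n/n$ stay $\ge\alpha$, the increments $\Re z_{n+1}-\Re z_n$ must be $\ge 2\alpha/3$ for infinitely many $n$ (otherwise $\Re z_n/n$ would eventually drop below $2\alpha/3<\alpha$). Choosing such an $n$ large enough so that both conditions hold gives $\Re z_{n+1}-\Re z_n>\alpha/3+\eta_{\alpha/3}(\Re z_n)$, that is $(z_{n+1},w_{n+1})\in R_{\alpha/3}$, so $(z_0,w_0)\in F^{-(n+1)}(R)\subset A$. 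Hence $\limsup_n u_n\ge 0$ on $U\setminus A$, and together with the upper bound $\limsup_n u_n\le 0$ from the earlier lemma this yields equality.
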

\begin{proof}
Let $(z_0,w_0)\in U\setminus A$.
Suppose by contradiction that there exists $\alpha>0$ and $N\in \N$ such that
$u_n(z_0,w_0)\leq -\alpha$ for all $n\geq N$, that is,
$$
\frac{\Re z_n}{n}\geq \alpha, \quad \forall\, n\geq N.
$$
Let $n_0\geq 0$ be such that $\eta_{\alpha/3}(\Re z_n)< \alpha/3$ for all $n>n_0$.
For all $0<\beta<\alpha$ there are arbitrarily large $n$ such that  $\Re z_{n+1}-\Re z_n\geq\beta.$
Setting $\beta:= 2\alpha/3$, there exists $n\geq n_0$  such that $$\Re z_{n+1}-\Re z_n> 2\alpha/3>\alpha/3+\eta_{\alpha/3} (\Re z_n),$$ which implies that $(z_{n+1},w_{n+1})\in R_{\alpha/3}$.

\end{proof}

\begin{prop}\label{absorbing}
The set $R$ is an absorbing domain.
\end{prop}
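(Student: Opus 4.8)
The goal is to prove that $U=A$, where $A=\bigcup_{n\in\N}F^{-n}(R)$. The inclusion $A\subseteq U$ is immediate from the forward invariance of $R$ together with $R\subseteq U$, so the content of the proposition is the reverse inclusion; equivalently, one must show that $U\setminus A=\varnothing$. The plan is to argue by contradiction, exploiting the pluriharmonic functions $u_n(z_0,w_0)=-\Re(z_n)/n$ already introduced, via the plurisubharmonic method advertised in the introduction.

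First I would assume $U\setminus A\neq\varnothing$ and set
$$
u:=\Big(\limsup_{n\to\infty}u_n\Big)^{*},
$$
the upper semicontinuous regularization of the pointwise upper limit. By the lemma preceding Lemma~\ref{lemma:negative}, the family $(u_n)$ is uniformly bounded from above on compact subsets of $U$; since each $u_n$ is pluriharmonic, hence plurisubharmonic, the standard regularization theorem for locally uniformly bounded-above families of plurisubharmonic functions yields that $u$ is plurisubharmonic on $U$ (and $u\not\equiv-\infty$, as will be clear in a moment).

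Next I would assemble the three estimates already proved: $u\le 0$ on all of $U$; on $U\setminus A$ one has $\limsup_n u_n=0$ by Lemma~\ref{lemma:zero}, hence $u=0$ there; and by Lemma~\ref{lemma:negative}, on every compact subset of the open set $A$ one has $\limsup_n u_n\le-\gamma$ for some $\gamma>0$, whence $u<0$ at every point of $A$. Since $U\setminus A$ is nonempty and consists of interior points of $U$ (as $U$ is open), the plurisubharmonic function $u$ attains its supremum $0=\sup_U u$ at an interior point of the connected domain $U$. By the maximum principle for plurisubharmonic functions this forces $u\equiv 0$ on $U$. But $A\supseteq R\neq\varnothing$ and $u<0$ on $A$, a contradiction. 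Hence $U\setminus A=\varnothing$, i.e.\ $U=A$, which is precisely the statement that $R$ is an absorbing domain.

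The only genuinely delicate point is the passage from the three $\limsup$ estimates to a bona fide plurisubharmonic $u$: one must invoke that the regularized upper limit of a locally uniformly bounded-above family of plurisubharmonic functions is plurisubharmonic, and note that $u\not\equiv-\infty$ because $u=0$ on the nonempty set $U\setminus A$. As an alternative to the maximum principle, one may argue directly at a point $p\in\partial A\cap U$ (which exists since $A$ is open, nonempty and, under our assumption, a proper subset of the connected set $U$): there $u(p)=0$ while $u<0$ on the portion of any small ball about $p$ meeting $A$, so the sub-mean value inequality fails at $p$, contradicting plurisubharmonicity. Everything else is a routine assembly of results already established.
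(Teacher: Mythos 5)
Your proposal is correct and follows essentially the same route as the paper: define $u^\star=(\limsup_n u_n)^{*}$, invoke the regularization theorem to get a plurisubharmonic function, combine Lemmas~\ref{lemma:negative} and~\ref{lemma:zero} to see that $u^\star<0$ on $A$ and $u^\star=0$ on $U\setminus A$, and derive a contradiction at the nonempty set $\partial A\cap U$; the maximum-principle phrasing you offer is an equivalent repackaging of the sub-mean-value argument the paper uses.
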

\begin{proof}
Let us assume, for the purpose of a contradiction, that $U \neq A$.  Define
$$
u(z) := \limsup_{n\to+\infty} u_n(z)
$$
and let $u^\star$ be its upper semicontinuous regularization. Then  by \cite[Prop 2.9.17]{Klimek} the function $u^\star$ is plurisubharmonic.
By Lemma \ref{lemma:negative} and  \ref{lemma:zero}  the function $u^\star$ is strictly negative on $A$, and constantly equal to zero on  $U\setminus A$.
But then $u^\star$ contradicts the sub-mean value property at  boundary points $\zeta\in \partial A$.
\end{proof}

\begin{remark}
It is easy to see that for all $n\in \Z$, $(z,w)\in \C^2$,
$$L^n(z,w)=((n+1)z-nw, nz-(n-1)w).$$
\end{remark}

%
%

\begin{defn}
We denote by $\Omega$ the domain $\{(z,w)\in\C^2: \Re({z}- {w})>0\}$, which is biholomorphic to $\mathbb{H}\times \C$.
\end{defn}

\begin{thm}
There exists a biholomorphism $\psi\colon U\to \Omega$ which conjugates $F$ to the map $L$. In particular $U$ is biholomorphic to $\mathbb{H}\times \C$.
\end{thm}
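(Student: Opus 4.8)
The plan is to build the conjugacy $\psi$ on the absorbing domain $R$ first, using the linearization of $F$ along the orbits, and then extend it to all of $U=A$ by the dynamical functional equation. Recall from Proposition~\ref{absorbing} that $U=A=\bigcup_n F^{-n}(R)$, so it suffices to produce a biholomorphism $\psi\colon R\to L^{-N}(\Omega)$ for suitable $N$ that intertwines $F$ and $L$, since then $\psi$ extends uniquely to $U$ by setting $\psi(x):=L^{-n}\psi(F^n(x))$ for $n$ large enough that $F^n(x)\in R$; the compatibility of these definitions is automatic from $\psi\circ F=L\circ\psi$ on $R$, and since $L$ is an automorphism of $\Omega$ (indeed of $\C^2$) the extension lands in $\Omega$ and is a local biholomorphism.

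First I would construct $\psi$ on $R$ as a limit. Write $F=L+E$ where $E(z,w)=(e^{-z},0)$ is the "error" term, which by Remark~\ref{driftremark} decays super-exponentially along orbits in $R_\alpha$: if $(z_n,w_n)$ is an orbit in $R_\alpha$ then $\Re z_n>\Re z_0+n\alpha$, so $\sum_n |e^{-z_n}|<\infty$ and in fact converges uniformly on compact subsets of $R$. The natural candidate is
$$
\psi := \lim_{n\to\infty} L^{-n}\circ F^n,
$$
and one checks that $L^{-(n+1)}F^{n+1}-L^{-n}F^{n}=L^{-(n+1)}(F-L)(F^n)=L^{-(n+1)}E(F^n)$. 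Using the explicit formula $L^{-(n+1)}(z,w)=(-nz+(n+1)w,\,-(n+1)z+(n+2)w)$ from the Remark, together with $E(F^n)=(e^{-z_n},0)$, the increment has norm comparable to $n\,|e^{-z_n}|$, which is summable and uniformly so on compacta by the drift estimate. Hence $\psi$ exists and is holomorphic on $R$, and passing to the limit in $L^{-n}F^n\circ F = L\circ L^{-(n+1)}F^{n+1}$ gives $\psi\circ F=L\circ\psi$ on $R$. Since $L^{-n}F^n$ is a composition of automorphisms of $\C^2$, each $L^{-n}F^n$ is injective; injectivity of the limit $\psi$ on $R$ follows from Hurwitz-type arguments once we know $\psi$ is nonconstant, which is clear since $d\psi$ at any point is the limit of $d(L^{-n}F^n)$ and these are computable to be close to the identity (the off-diagonal/error contributions to the differential are again controlled by $\sum n|e^{-z_n}|$).

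Next I would identify the image. On $R_\alpha$ we have $\Re(z_n-w_n)=\Re(z_0-w_0)+\sum_{j<n}e^{-z_j}>0$, and the first-coordinate-minus-second-coordinate of $L^{-n}F^n(z_0,w_0)$ equals $(z_n-w_n)$ up to the action of $L^{-n}$; since $L$ preserves $\Omega=\{\Re(z-w)>0\}$, one sees $\Re(\text{first}-\text{second})$ of $\psi$ is a limit of quantities bounded below, so $\psi(R)\subset\Omega$. Surjectivity of the extended map $\psi\colon U\to\Omega$ is then obtained exactly by the \emph{plurisubharmonic method} already set up: if $\psi(U)\subsetneq\Omega$, pull back via $\psi$ a suitable plurisubharmonic function on $\Omega$ detecting a boundary point of $\psi(U)$ inside $\Omega$ (for instance a function of $\Re(z-w)$), obtaining a plurisubharmonic function on $U$ violating the sub-mean value property at a point of $\partial(\psi(U))\cap\Omega$ pulled back into $U$; alternatively one argues that $\psi(U)$ is open, closed in $\Omega$ (using that $\psi$ is proper, since orbits leave every compact set), and $\Omega$ is connected. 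Finally, $U\cong\Omega\cong\mathbb H\times\C$ because $\Omega=\{\Re(z-w)>0\}$ is linearly isomorphic to $\mathbb H\times\C$ via $(z,w)\mapsto(z-w,w)$.

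The main obstacle I anticipate is establishing that $\psi$ is \emph{globally injective and surjective onto exactly $\Omega$}, rather than merely a local biholomorphism with image contained in $\Omega$. Local existence and the conjugacy relation are routine given the super-exponential decay of $e^{-z_n}$; the delicate point is ruling out that $U$ maps onto a proper subdomain, and here the plurisubharmonic obstruction argument (as in Proposition~\ref{absorbing}) is the right tool, combined with properness of $\psi$ which follows because every orbit in $U$ tends to $[1:1:0]$ while $L$-orbits in $\Omega$ also escape, forcing $\psi^{-1}$ of a compact set to be compact.
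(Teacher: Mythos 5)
Your construction of $\psi=\lim_n L^{-n}\circ F^n$ on $R$, the increment estimate via $\sum n\,|e^{-z_n}|$, and the extension to $U$ by the functional equation $\psi\circ F=L\circ\psi$ are all exactly the paper's approach, and your argument that $\psi(R)\subset\Omega$ by showing $\psi$ is close to the identity far out along orbits is also the paper's. Two points, however, are off.

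On injectivity: in $\C^2$, a locally uniform limit of injective holomorphic maps need not be injective even if the limit is nonconstant (think $(z,w)\mapsto(z,w/n)\to(z,0)$), so \emph{nonconstancy of $\psi$} is not what makes a Hurwitz-type argument work. You later gesture at $d\psi$ being close to the identity, which would give nondegeneracy, but that estimate really only holds far out in $R_\alpha$. The clean route — which the paper takes via Lemma~\ref{hurwitzlimit} — is to notice that $L$ and $F$ both have Jacobian determinant $1$, so each $\psi_n=L^{-n}\circ F^n$ is volume preserving; the limit $\psi$ is then volume preserving, hence has everywhere nondegenerate differential, and Hurwitz applies.

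On surjectivity there is a genuine gap. The psh-obstruction idea, as you sketch it, doesn't go through: pulling back a psh function on $\Omega$ by the holomorphic map $\psi$ gives a psh function on $U$ automatically — no violation of the sub-mean-value property can arise — and the alleged contradiction is at a point of $\partial\psi(U)\cap\Omega$ which isn't in $\psi(U)$, so it has no preimage in $U$. Your alternative ``open and closed via properness'' also doesn't work as stated: properness of $\psi\colon U\to\Omega$ would mean $\psi^{-1}(K)$ is compact for $K\subset\subset\Omega$, which is precisely equivalent (given openness and injectivity) to $\psi(U)=\Omega$; ``orbits leave every compact set'' says nothing about the behavior of $\psi$ near $\partial U$, so invoking it is circular. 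The paper's actual argument is different and elementary: for $(x_0,y_0)\in\Omega$ set $(x_n,y_n)=L^n(x_0,y_0)$; one has $\Re(x_n-y_n)=\Re(x_0-y_0)=\beta>0$ while $\Re x_n,\Re y_n\to\infty$, so for $\alpha<\beta$ eventually $\overline B((x_n,y_n),r)\subset R_\alpha$; on these balls $\|\psi-\mathrm{Id}\|\to 0$, so by Rouch\'e $(x_n,y_n)\in\psi(\overline B((x_n,y_n),r))\subset\psi(U)$, and then the complete invariance of $\Omega$ under $L$ and of $U$ under $F$ pulls $(x_0,y_0)$ into $\psi(U)$. This is the missing step you'd need to supply.
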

\begin{proof}
We will construct the map $\psi$ as the uniform limit on compact subsets of $U$ of the maps
$$
\psi_n:=L^{-n} \circ F^n\colon U\to \C^2.
$$
Notice that for all $n>1$, the mapping $\psi_n$ is an injective volume-preserving holomorphic mapping.
We have
$$
\begin{aligned}
\delta_n(z_0,w_0) & :=\|L^{-n-1} (F^{n+1}(z_0,w_0))-L^{-n} ( F^n(z_0,w_0))\|\\
& = \|L^{-n}\left(L^{-1}F(z_n,w_n) - (z_n,w_n)\right)\|\\
& = \|L^{-n} (0, -e^{-z_n})\| \\
& = \|(-n e^{-z_n}, (-n-1)e^{-z_n})\|\leq \sqrt 2 (n+1)e^{-\Re z_n}.
\end{aligned}
$$
By Remark \ref{driftremark}, on $R_\alpha$ $$\sqrt 2 (n+1)e^{-\Re z_n}\leq \sqrt 2 (n+1)e^{-\Re z_0-n\alpha},$$
Hence for all $(z_0,w_0)\in R_\alpha$,  we have $\sum_{k=0}^\infty  \delta_n(z_0,w_0)<+\infty,$ and thus
the sequence $(\psi_n)_{n\geq 0}$ converges uniformly on compact subsets of $R$ to an injective volume-preserving (see Lemma \ref{hurwitzlimit}) holomorphic mapping $\psi\colon R\to \C^2$,
satisfying
\begin{equation}\label{conj}
L\circ  \psi=\psi\circ F.
\end{equation}
 Since $R$ is an absorbing domain, $\psi$ extends to an injective volume-preserving holomorphic mapping (still denoted by $\psi$) defined on the whole Fatou component $U$ and  still satisfying (\ref{conj}).
We claim that $\psi(U)=\Omega$.
%

We first show that $\psi(U)\subset \Omega$.
Let $(z_0,w_0)$ be in $R_\alpha$.  Notice that  the Euclidean distance $d(R_\alpha, \Omega^\complement)>0$.
We claim that $$\lim_{n\to \infty}\|\psi(z_n,w_n) - (z_n,w_n)\|= 0.$$
Indeed,
$$\|\psi(z_n,w_n) - (z_n,w_n)\|\leq \sum_{j=0}^\infty \|\psi_{j+1}(z_n,w_n)-\psi_j(z_n,w_n)\|=\sum_{j=0}^\infty\|\delta_j(z_n,w_n)\|\leq
 \sqrt 2 \sum_{j=0}^\infty (j+1)e^{-\Re(z_{n+j} )},$$
and the claim follows since ${\rm Re}\, z_{n+j}\geq {\rm Re}\, z_0+(n+j)\alpha$.
Hence, if $n$ is large enough, $\psi(z_n,w_n)\in \Omega$.
Since $\Omega$ is completely invariant under $L$ and $U$ is completely invariant under $F$, it follows that $\psi(U)\subset \Omega$.

What is left is to show that $\Omega\subset \psi(U)$. Let $(x_0, y_0) \in \Omega$, and write $(x_n, y_n) := L^n(x_0, y_0)$. By definition of $\Omega$ we have that $\beta := \mathrm{Re}(x_0 - y_0) > 0$. Note that $\mathrm{Re}(x_n - y_n)$ also equals $\beta$, and that $\mathrm{Re}(x_n), \mathrm{Re}(y_n) \rightarrow \infty$. Let $0< \alpha < \beta$. Recalling that $\eta_\alpha(x) \rightarrow 0$ as $x \rightarrow +\infty$ it follows that $(x_n, y_n) \in R_\alpha$ for all $n\in \mathbb N$ sufficiently large. In fact, there exists an $r>0$ and an $N \in \mathbb N$ such that $R_\alpha$ contains the closed ball  $\overline B((x_n, y_n),r)$ for all $n \ge N$.
We claim that $$\lim_{n\to \infty}\|\psi - \mathrm{Id}\|_{\overline B((x_n, y_n),r)} = 0.$$
Indeed, for all $n\in \N$,
 $$\|\psi- \mathrm{Id}\|_{\overline B((x_n, y_n),r)}\leq \sum_{j=0}^\infty \|\psi_{j+1}-\psi_j\|_{\overline B((x_n, y_n),r)}=\sum_{j=0}^\infty\|\delta_j\|_{\overline B((x_n, y_n),r)}\leq
 \sqrt 2 \sum_{j=0}^\infty (j+1)\|e^{-\Re( \pi_1\circ F^j)}\|_{\overline B((x_n, y_n),r)}.$$
Assume now that $n\geq N$. Since $\overline B((x_n, y_n),r)\in R_\alpha$ we have that  for all $(x,y)\in \overline B((x_n, y_n),r)$,
$$e^{-\Re \pi_1(F^j(x,y))}\leq e^{-\Re x-j\alpha}\leq e^{-\Re x_0 -(n+j)\alpha+r},$$ where the last inequality follows from the fact that for all $n\in \N$ and $(x,y)\in \overline B((x_n, y_n),r)$ we have $\Re x\geq\Re x_0 +n\alpha -r$. This proves the claim.

Let $n \ge N$ be such that $\|\psi - \mathrm{Id}\| \le \frac{r}{2}$ on $\overline B((x_n, y_n),r)$.
By Rouch\'e's Theorem in several complex variables it follows that $(x_n, y_n) \in \psi(\overline B((x_n, y_n),r)) \subset \psi(U)$. Since $\Omega$ is completely invariant under $L$ and $U$ is completely invariant under $F$, it follow that $(x_0, y_0) \in \psi(U)$.

\end{proof}

\section{Escaping wandering domain}\label{sectionescaping}

\begin{defn}
Let $F$ be a transcendental H\'enon map.
A Fatou component $\Omega$ is a  \emph{ wandering domain} if it is not preperiodic.
A wandering domain
\begin{enumerate}
\item is  \emph{escaping} if all orbits converge to the line at infinity,
\item is   \emph{oscillating} if there exists an unbounded orbit and an orbit with a bounded subsequence,
\item is  \emph{orbitally bounded } if every orbit is bounded.
\end{enumerate}
\end{defn}
{For polynomials in $\C$ it is known  that wandering domains cannot exist \cite{Sullivan}.  For transcendental functions there are examples of  escaping wandering domains. \cite{BerSur} uses for example the function $f(z)=z+\lambda \sin (2\pi z)+1$ for suitable $\lambda$.
There are also examples of oscillating wandering domains (\cite{EL87}, \cite{Bi}), and it is an open question whether orbitally bounded  wandering domains can exist.

It follows from the existence of the filtration that a polynomial H\'enon map does not admit any escaping or oscillating wandering domain. In the remainder of the paper we will give examples of both escaping and oscillating wandering domains for transcendental H\'enon maps. The existence of orbitally bounded wandering domains  is an open question for both polynomial and transcendental H\'enon maps.

We start with the escaping case, and we will be inspired by the construction of escaping wandering domains for transcendental functions.
Similar to \cite{BerSur} we will use  functions of the form
$$
\tilde{f}(z) = z + \sin(2\pi z) + \lambda
$$
with appropriate values of $\lambda$.  It will be convenient for us to take the constant $\lambda$ such that we obtain an escaping orbit, all consisting of critical points. Note that
\begin{equation*}
\tilde{f}^\prime(z) = 1 + 2\pi \cos(2\pi z).
\end{equation*}
The critical points of $\tilde f$ are therefore the points that satisfy
\begin{equation}\label{Critical}
\cos(2\pi z) = \frac{-1}{2\pi}.
\end{equation}

A computation gives that $\tilde{f}$ has  two distinct bi-infinite sequences of critical points

\begin{align*}
c_{n,+}&:=\alpha+n &\text{  with $\sin(2\pi c_{n,+}) = + \sqrt{1 - \frac{1}{4\pi^2}}$}\\
c_{n,-}&:=-\alpha+n &\text{ with $\sin(2\pi c_{n,-}) = - \sqrt{1 - \frac{1}{4\pi^2}}$}
\end{align*}

for the appropriate value of $\frac{1}{4}<\alpha<\frac{1}{2}$ given by solving (\ref{Critical}).

Note that the set of critical points is invariant under translation by $1$.
By taking
$$
\lambda = 1 - \sqrt{1 - \frac{1}{4\pi^2}},
$$
$\tilde{f}$ acts as a translation by $1$ on the sequence $c_{n,+}$.

For simplicity of notation we consider the map $f$ obtained by conjugating $\tilde{f}$ with a real translation by $\alpha+\frac{1}{4}$, so that the critical points $c_{n,+}$ for $\tilde{f} $ are mapped to critical points $z_n=n$ for $f$. Thus, $f$ will act on $\mathbb Z$ as a translation by $1$, and $f$ commutes with translation by $1$.

Consider the function $g(z) := f(z) - 1$, which commutes with $f$ and with the translation by $1$. For the function $g$ each point $z_n$ is a  super-attracting \emph{fixed point}.
For all $n\in \Z$ denote by $B_n$ the immediate basin  for $g$ of the point $z_n$. Clearly the basins $B_n$ are disjoint,  $B_{n+1}=B_n+1$, and $f(B_n)\subset B_{n+1}$. If one shows that each $B_n$ is also a Fatou component for $f$, then clearly each $B_n$ is an escaping wandering domain for $f$.


There are two classical ways in one dimensional complex dynamics to show that each $z_n$ belongs to a different Fatou component for such a function $f$. One is   by constructing curves in the Julia set that separate the points $z_n$ \cite[p. 183]{De}, and the other one is by showing that any two commuting maps which differ by a translation have the same Julia set \cite[Lemma 4.5]{Ba}. The first kind of argument is typically unavailable in higher dimensions. The proof we present is an {\sl ad hoc} version of the second argument.
We will define transcendental H\'enon maps $F,G:\Ctwo\ra\Ctwo$ which behave similarly  to $f,g$, then use $G$ to show that the  norm of the differentials   $dG^n$ and $dF^n$ at points on the boundaries of the attracting basins for $G$ grow exponentially in $n$,
and finally  use the latter information to  imply that those boundaries must be contained in the Julia set for $F$. Since the attracting basins are disjoint for $G$, the corresponding sets are disjoint for $F$, giving a sequence of wandering domains.

Let us define the H\'enon map
$$
F(z,w) = (f(z) + \delta(z-1) - \delta w, z),
$$
where $f$ is as constructed before and  $\delta>0$ is some constant to be suitably chosen later. Since $f$ commutes with translation by $1$, the map $F$ commutes with translation by $(1,1)$. Moreover, on the sequence of points $(n, n - 1)$ the map $F$ acts as a translation by $(1,1)$. We also define the map
$$
G(z,w) := F(z,w) - (1,1).
$$
Hence the points $P_n=(n, n-1)$ are all fixed points of $G$. Since the points $z_n$ are critical points of $f$, we can choose $\delta$ sufficiently small so that the fixed points $P_n=(n, n-1)$ are attracting for $G$. Denote by $A_{n}$ the attracting basin of the point $P_n$, which is biholomorphic to $\C^2$. It is easy to see that $A_{n+1}=A_n+(1,1)$ and that    $F(A_n)= A_{n+1}$. We claim that each $A_n$ is also a Fatou component for $F$, and thus each $(A_n)$ is an escaping wandering domain. We first need  a Lemma.
\begin{lemma}\label{lemma:smallderivatives}
Let $0<\lambda<1$, and let $(\xi_n\colon \D\to \C)$ be a sequence of holomorphic functions satisfying
$$
|\xi_n(z)| < \lambda^n
$$
for all $z \in \mathbb D$.  Let $0<\beta<1$. Then
$$
|\xi_n^{(k)}(0)| < k!\cdot\beta^k
$$
for
$$
k < \frac{n}{\log_\lambda(\beta)}.
$$
\end{lemma}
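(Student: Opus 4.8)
The plan is to use the Cauchy estimates for the Taylor coefficients of a holomorphic function on the disk, applied on a suitably chosen smaller disk. Recall that if $\xi\colon \D\to\C$ is holomorphic with $|\xi|<M$ on $\D$, then for every $0<\rho<1$ the Cauchy integral formula gives $|\xi^{(k)}(0)|\leq \frac{k!\,M}{\rho^k}$, since $\xi^{(k)}(0)=\frac{k!}{2\pi i}\int_{|z|=\rho}\frac{\xi(z)}{z^{k+1}}\,dz$. Applying this with $M=\lambda^n$ yields
\[
|\xi_n^{(k)}(0)|\leq \frac{k!\,\lambda^n}{\rho^k}
\]
for every $0<\rho<1$.

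Now I would simply take $\rho$ close to $1$, say the estimate holds in the limit $\rho\to 1^-$, giving $|\xi_n^{(k)}(0)|\leq k!\,\lambda^n$. (One can also keep a fixed $\rho<1$ and absorb the harmless factor, but the cleanest route is to note the bound $|\xi_n^{(k)}(0)|\le k!\lambda^n$ follows by letting $\rho\nearrow 1$.) It then remains to check that $\lambda^n<\beta^k$ under the stated hypothesis on $k$. Taking logarithms (base $\lambda$, noting $0<\lambda<1$ so that $\log_\lambda$ reverses inequalities), the inequality $\lambda^n<\beta^k$ is equivalent to $n>k\log_\lambda(\beta)$, i.e. $k<\frac{n}{\log_\lambda(\beta)}$, which is precisely the assumed range of $k$ (here $\log_\lambda(\beta)>0$ since both $\lambda$ and $\beta$ lie in $(0,1)$). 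Combining, for $k$ in this range we get $|\xi_n^{(k)}(0)|\leq k!\,\lambda^n<k!\,\beta^k$.

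There is essentially no obstacle here: the only mild subtlety is the direction of the inequality when passing to logarithms with base $\lambda<1$, and the fact that one needs $\rho$ arbitrarily close to $1$ rather than a fixed radius in order to get the clean constant $k!\,\beta^k$ (as opposed to $C\cdot k!\,\beta^k$). Everything else is the standard Cauchy estimate. I would write this up in three or four lines.
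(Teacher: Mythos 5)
Your proof is correct and is essentially identical to the paper's argument, which likewise appeals to the Cauchy estimates and observes that the hypothesis on $k$ is equivalent to $\lambda^n < \beta^k$. The one small refinement you add (letting $\rho\nearrow 1$ to drop the $\rho^{-k}$ factor) is the natural way to make the standard estimate yield the clean constant, and it is presumably what the paper intends.
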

\begin{proof}
It follows from the Cauchy estimates since the assumption on $k$ is equivalent to
$$
\lambda^n < \beta^k.
$$
\end{proof}

\begin{thm}
Let $F$ be a transcendental H\'enon map that commutes with a translation $T = T_{(1,1)}$. Write
$$
G :=  T^{-1} \circ F,
$$
assume that $G$ has an attracting fixed point $Q$, and denote its basin of attraction by $A$. Then $A$ is also a Fatou component of $F$.
\end{thm}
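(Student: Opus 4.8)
The plan is to pass from $F$ to $G$, reduce the statement to a Julia-set membership, and bridge the gap between the Julia sets of $G$ and $F$ by a derivative estimate; the bridge is the hard part.

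\emph{First, locate $A$ inside a Fatou component of $F$.} Since $F$ commutes with $T=T_{(1,1)}$ and $G=T^{-1}\circ F$, an induction gives $G^n=T^{-n}\circ F^n$, that is $F^n(\cdot)=G^n(\cdot)+(n,n)$; in particular $d_xF^n=d_xG^n$ for every $x$ and every $n$. On the basin $A$ the iterates $G^n$ converge locally uniformly to $Q\in\C^2$, hence $F^n(q)=[G^n_1(q)+n:G^n_2(q)+n:1]\to[1:1:0]$ locally uniformly on $A$. Thus $A$ lies in the Fatou set of $F$; being connected (as the basin of attraction of a fixed point), $A$ is contained in a unique Fatou component $\Omega$. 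Every subsequential limit of $(F^n|_\Omega)$ in $\mathrm{Hol}(\Omega,\P^2)$ is constantly $[1:1:0]$ on the nonempty open set $A$, hence on all of $\Omega$ by the identity principle, so $F^n\to[1:1:0]$ locally uniformly on $\Omega$. Since $A$ is open, it suffices to show $\partial A\cap\Omega=\varnothing$: then $A$ is open and closed in the connected set $\Omega$, whence $A=\Omega$.

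\emph{Next, the key estimate.} If $G^{n_k}\to h$ uniformly on a neighborhood $U$ of a point $p\in\partial A$, then $h\equiv Q$ on $U\cap A$; by the Hurwitz argument in the proof of Lemma \ref{projectiveimpliesintrinsic}, $h$ is then $\C^2$-valued and $h\equiv Q$ on $U$, which would put $p$ in the open set $A$, a contradiction. Hence $\partial A$ lies in the Julia set of $G$. The crucial upgrade, which I regard as the main obstacle, is that for every $p\in\partial A$ there are a fixed $\lambda>1$ and a sequence $n_k\to\infty$ with $\|d_pG^{n_k}\|\ge\lambda^{n_k}$. The approach I would take exploits that $G$ is \emph{strictly} volume contracting (one has $|\delta|<1$ because $d_QG$ has two eigenvalues of modulus $<1$) together with the attracting character of $Q$: fix a small forward-invariant ball $B(Q,\varepsilon_0)\subset A$ on which $G$ is linearizable and contracts in a suitable norm; given $p\in\partial A$, choose $p_j\in A$ with $p_j\to p$ and let $n_j$ be the first time $G^{n_j}(p_j)\in B(Q,\varepsilon_0)$. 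Since $p\notin A$ we have $n_j\to\infty$, while for $0\le k<n_j$ the orbit point $G^k(p_j)$ remains outside $B(Q,\varepsilon_0)$, i.e.\ in a region where no uniform contraction is available because $\partial A$ lies in the Julia set of $G$; tracking $d_{p_j}G^{n_j}$ along this orbit and using $\det d_{p_j}G^{n_j}=\delta^{n_j}$ to control the product of its singular values, one should be able to extract, as $j\to\infty$, the asserted exponential lower bound at $p$. Making this expansion argument rigorous is the heart of the proof.

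\emph{Finally, the conclusion.} Assume for contradiction that some $p\in\partial A$ lies in $\Omega$, and fix a ball $B(p,r)\subset\Omega$. On $B(p,r)$ the iterates converge uniformly to $[1:1:0]$, so in the affine chart $\Phi(z,w)=(w/z,1/z)$, in which $[1:1:0]=(1,0)$, the maps $\Phi\circ F^n=(w_n/z_n,\,1/z_n)$ eventually take values in a fixed small ball about $(1,0)$; in particular $w_n/z_n\to1$ uniformly on $B(p,r)$. Because $F$ is a H\'enon map, $w_n=z_{n-1}$, hence $z_n/z_{n-1}\to1$ uniformly, which gives $|z_n(q)|\le C_\varepsilon(1+\varepsilon)^n$ on $\overline{B(p,r/2)}$ for every $\varepsilon>0$ (once $z_n$ is zero-free there). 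By the Cauchy estimates $\|d(\Phi\circ F^n)\|$ is bounded on $B(p,r/4)$; since $\|(d\Phi_{(z,w)})^{-1}\|\le C|z|^2$ whenever $|w/z|$ is close to $1$ and $|z|\ge 1$, and $d(\Phi\circ F^n)_p=d\Phi_{F^n(p)}\circ d_pF^n$, we obtain
$$
\|d_pG^n\|=\|d_pF^n\|\le\|(d\Phi_{F^n(p)})^{-1}\|\cdot\|d(\Phi\circ F^n)_p\|\le C'\,|z_n(p)|^2\le C''(1+\varepsilon)^{2n}.
$$
Choosing $\varepsilon$ so small that $(1+\varepsilon)^2<\lambda$ contradicts the exponential lower bound of the previous step. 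Therefore $\partial A\cap\Omega=\varnothing$, so $A=\Omega$ is a Fatou component of $F$.
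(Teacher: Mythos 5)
Your overall blueprint matches the paper: contain $A$ in an $F$-Fatou component $\Omega$, push the argument to a derivative-growth estimate for $G^n$ near $\partial A$, and use the affine chart $H(z,w)=(w/z,1/z)$ to show that such growth is incompatible with being in $\Omega$ (the upper-bound derivation via $z_n/z_{n-1}\to 1$ and $\|(dH)^{-1}\|\sim |z_n|^2$ is exactly what the paper does). But the central step — the exponential lower bound on derivatives — is precisely where your proposal has a genuine gap, and you say so yourself.

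Two concrete problems with the gap. First, the statement you propose to prove is stronger than what is needed or what the paper establishes: you want a fixed $\lambda>1$ and a sequence $n_k$ with $\|d_pG^{n_k}\|\ge\lambda^{n_k}$ \emph{at the boundary point $p$ itself}. The paper does not prove this; it shows that for each affine disk $\varphi:\mathbb{D}\to\C^2$ with $\varphi(0)\in A$ and $P\in\varphi(\mathbb D)$, there is a point $P_*\in\varphi(D(0,\eta))$ (which can be forced to lie in an arbitrarily small neighbourhood of $P$) and a $\Lambda>1$ (depending on $\varphi$) with $\|d_{P_*}G^n\|\ge C\Lambda^n$, and then concludes $P_*$ lies in $J(F)$, so $P\in\overline{J(F)}=J(F)$. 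If you insist on the bound at $p$, you would also have to run the concluding chart estimate at $P_*$ rather than $p$ in any case, so nothing is gained and something might be false. Second, the mechanism you sketch — first hitting time of a linearizable ball, the orbit of $p_j$ staying outside $B(Q,\varepsilon_0)$ for $n_j$ steps, and $\det d_{p_j}G^{n_j}=\delta^{n_j}$ — does not produce expansion. Staying outside a small ball gives no lower bound on derivative norm, and strict volume contraction alone cuts both ways: it bounds the \emph{product} of singular values above, and cannot by itself force one singular value to be exponentially large. The paper's actual mechanism is one-variable complex analysis, not real dynamics: it considers $\psi_n:=\pi_n\circ G^n\circ\varphi$ on a disk, observes that $\|\psi_n\|$ decays like $\lambda^n$ on a subdisk $D(0,r)$ mapped into $A$, while $|\psi_n(\zeta)|$ stays bounded below at the preimage $\zeta$ of $P$; Lemma~\ref{lemma:smallderivatives} then forces some Taylor coefficient $a_k$ with $k\gtrsim n$ to be at least $\mu^k$, and a Cauchy estimate on a slightly larger disk $D(0,\eta)$ converts that into $|\psi_n'(z_*)|\gtrsim\Lambda^n$ at some $z_*$. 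That coefficient-comparison argument is the missing idea, and it is not a technicality: it is the heart of the proof and is not recoverable from the volume-contraction sketch you propose.
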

\begin{proof}
Notice that $(T^k(Q))$ is an $F$-orbit  converging to the point $[1:1:0]$ on the line at infinity. Since  $F^n=T^n\circ G^n$ for all $n\in \N$, it follows that the $F$-orbits of all points in $A$ converge to $[1:1:0]$. Hence $A$ is contained in a Fatou component $\Omega$ of $F$. We show that $\Omega$ equals $A$ by proving that all boundary points of $A$ are contained in the Julia set  of $F$.

Without loss of generality we may assume that the point $Q$ is the origin.
Let $P \in \partial A$. Since $P\notin A$, its $G$-orbit avoids some definite ball centered at $(0,0)$, hence there exists $0<\mu<1$ for which
$$
\|G^n(P)\| \ge \sum_{k=0}^\infty \mu^k
$$
for all $n \in \mathbb N$.
For each $n$ we can choose an orthogonal projection $\pi_n$ onto a complex line through $(0,0)$ so that
$$
|\pi_n(G^n(P))| \ge \sum_{k=0}^\infty \mu^k.
$$

Let $\varphi: \mathbb C \rightarrow \mathbb C^2$ be an affine embedding for which $P \in \varphi(\mathbb{D})$ and $\varphi(0) \in A$. We will show that for any choice of $\varphi$ the derivatives of $G^n\circ \varphi$ grow exponentially fast for some point in the disk $D(0,\eta)$, where $\eta>\frac{1}{\mu}$ is chosen independently of $\varphi$. This will imply that some point in $\varphi(D(0,\eta))$ is contained in the Julia set of $F$. Since $\varphi$ is arbitrary,
one can choose $\varphi(D(0,\eta))$ to be contained in  arbitrarily small neighborhoods of $P$, giving a sequence of points $Z_n\ra P$ belonging to the Julia set of $F$. Since the latter is closed, the statement of the theorem follows.

We consider the sequence of maps $\psi_n: \mathbb C \rightarrow \mathbb C$ defined by
$$
\psi_n := \pi_n \circ G^n \circ \varphi.
$$

Let $r>0$ be  such that  $\varphi(D(0,r))\subset \subset A_0$. Then there exist $C>0$ and $\lambda<1$ so that
$$
\|\psi_n\|_{D(0,r)} < C \cdot \lambda^n.
$$
By instead defining
$$
\psi_n = \pi_n \circ G^{n+N} \circ \varphi.
$$
for some large integer $N \in \mathbb N$ we may assume that
$$
\|\psi_n\|_{D(0,r)} < \lambda^n.
$$
By defining the maps $\xi_n(z) := \psi_n(r\cdot z)$ we obtain a sequence of maps $(\xi_n\colon \D\to \C)$ that satisfy the conditions of Lemma \ref{lemma:smallderivatives}, which we apply with $\beta:=\mu\cdot r$. Hence
$$
|\xi_n^{(k)}(0)| < k!\mu^kr^k \text{\ \ \ \ for $k < \frac{n}{\log_\lambda(\mu r)}$},
$$
which implies that
\begin{equation}\label{Contracting estimates}
|\psi_n^{(k)}(0)| < k! \mu^k \text{\ \ \ \ for $k < \frac{n}{\log_\lambda(\mu r)}$}.
\end{equation}
Writing $\zeta = \varphi^{-1}(p)\in \D$, we also have that
\begin{equation}\label{psiestimates}
|\psi_n(\zeta)| \ge \sum_{k=0}^\infty\mu^k
\end{equation}
for all $n$. Writing
$$
\psi_n(z) = \sum_{k=0}^\infty a_k z^k,
$$
by (\ref{psiestimates}) there is at least one $k\in \N$ for which
$$
|a_k|\ge \mu^k,
$$
and by (\ref{Contracting estimates}),
  $k \geq \frac{n}{\log_\lambda(\mu r)}$. Putting things together we get
$$
|\psi_n^{(k)}(0)| \ge k!\mu^k \text{\ \ \  for some $k \geq \frac{n}{\log_\lambda(\mu r)}$}.
$$
Let $\eta>\frac{1}{\mu}$. By Cauchy estimates  this implies the existence of a $z_* \in  D(0,\eta)$ for which
$$
|\psi_n^\prime(z_*)| > \frac{\eta^{k-1} \mu^k k!}{(k-1)!}\geq C \Lambda^n,
$$
where $\Lambda=(\eta \mu)^\frac{1}{\log_\lambda(\mu r)}>1 $ and $C>0$ is constant. Denote $P_*:=\varphi(z_*)$. Since $\|\pi_n\|=1$  we get, up to changing $C$,
$$
 \|d_{P_*}(G^n)\|\geq  C \Lambda^n.
$$
It follows from $F^n = T^n \circ G^n$  that
\begin{equation}\label{Ffast}
  \|d_{P_*}(F^n)\|  \geq C \Lambda^n.
\end{equation}

We now show that a point in $\varphi( D(0,\eta))$ is contained in the Julia set of $F$.
Assume for the purpose of a contradiction that $\varphi( D(0,\eta))$ is contained in the Fatou component $\Omega$.
It follows that $F^n\rightarrow [1:1:0]$ uniformly on a neighborhood $U$ of $P_*$.

Consider the affine chart of $\P^2$ defined as $[z:w:t]\mapsto(w/z,t/z)$ defined on $\{z\neq 0\}$. On $\{z\neq 0\}\cap \{t\neq 0\}$ such chart has the expression $H(z,w)=(w/z,1/z)$.
The sequence $(H\circ F^n\colon U\to \C)$ is  defined for $n$ large enough  and converges uniformly to the point $(1,0)\in \C^2$.
Hence, denoting   $P_*:=(z_0,w_0)$,
\begin{equation} \label{chaincontradiction}
d_{(z_n,w_n)}H\circ d_{(z_0,w_0)} (F^n)      =d_{(z_0,w_0)} (H\circ F^n)\to 0.
\end{equation}

%

We have that
$$
(d_{(z_n,w_n)}H)^{-1}=\left(\begin{array}{cc}
0& -z_n^2\\
 z_n& -w_nz_n
  \end{array}\right).
$$
Since $(z_n,w_n)\to [1:1:0]$, arguing as in (\ref{zwartepiet}) we obtain that for every $\epsilon>0$ there exists $C'>0$ such that
$$\frac{1}{\| (d_{(z_n,w_n)}H)^{-1}\|}\geq \frac{1}{C'}\frac{1}{(1+\epsilon)^n},$$
and thus
$$\|d_{(z_n,w_n)}H\circ d_{(z_0,w_0)} (F^n) \|\geq \frac{\|d_{(z_0,w_0)} (F^n)\|}{\|(d_{(z_n,w_n)}H)^{-1}\|}\geq  \frac{C}{C'}\frac{\Lambda^n}{(1+\epsilon)^n}.$$
Hence if $\epsilon$ is small enough, this contradicts (\ref{chaincontradiction}).

\end{proof}

\section{Oscillating wandering domain}\label{sectionoscillating}
We show in this section the existence of a transcendental H\'enon map with an oscillating wandering domain biholomorphic to $\C^2$.

Notice that, up to a linear change of variables, any   transcendental H\'enon map can be written in the alternative form
$F(z,w)=(f(z)+aw,az)$, where $f$ is a transcendental function and $a\neq 0$.
We will consider maps of the form  $$F(z,w):=(f(z)+aw,az), \quad f(z):=bz+O(|z|^2), $$ where $a,b\in \R$, $a<1$, are such that the origin is a saddle point. For example,
since the eigenvalues of $d_0 F$ are
 $$
\lambda= \frac{b\pm \sqrt{b^2+4a^2}}{2},
$$
we can pick $a=1/2$, and $b=1$ to obtain $\lambda_s=\frac{1- \sqrt{2}}{2}$,  $\lambda_u=\frac{1+ \sqrt{2}}{2}$.

We will construct $F$ as the uniform  limit on compact subsets of a sequence of automorphisms $F_k(z,w):=(f_k(z)+aw,az)$ with the same   value for $a$ and $b$. We note that for transcendental H\'enon maps $F(z,w)=(f(z)+aw, az), G(z,w)=(g(z)+aw, az)$ and any subset $A\subset \C$ we have
$$
\|f-g\|_A=\|F-G\|_{A\times \C}.
$$

\begin{prop}\label{prop:claim1} There exists a sequence of entire  maps
$$
F_k(z,w)=(f_k(z)+aw,az), \quad f_k(z)=bz+O(|z|^2), \quad k=0,1,2,\dots
$$
a sequence of points $P_n=(z_n,w_n)$, where $n=0,1,2,\dots $,  sequences $R_k\ra\infty$, $0<\epsilon_k\leq \frac{1}{2^k}$ and $\beta_n\to 0$, a decreasing  sequence
$\theta_k\to 0$,  and strictly increasing sequences of integers $\{n_k\}, \{n'_k\}$ with $n_k<n'_k< n_{k+1}$, such that the following five properties are satisfied:
\begin{itemize}
\item[(i)] $\|F_{k}-F_{k-1}\|_{D(0,R_{k-1})\times\C}\leq \epsilon_{k}$ for all $k\geq 1$;
\item[(ii)] $F_{k}(P_n)=P_{n+1}$ for all $0 \leq n < n_{k}$;
\item[(iii)]\label{condizioneintorni}
$F_k(B(P_n,\beta_n))\subset \subset B(P_{n+1},\beta_{n+1})$ for all $0 \leq n<n_k$, where $\beta_n\leq \frac{\theta_{\rho}}{2}$ for all $0\leq n\leq n_k$;
 \item[(iv)]  \label{DisjointPoints}  $|z_n|<R_{k} -\theta_{\rho}$ for all $0\leq n< n_k$, and  $|z_{n_{k}}|> R_{k} +5 \theta_k$;
\item[(v)] $P_{j}\in B(0,\frac{1}{k})$  for some $j$ with $n_{k-1}\leq j\leq n_{k}$, for all $k\geq 1$.
\end{itemize}
Here $\rho:=\rho(n)$ denotes the unique integer for which $n'_{\rho-1}\leq n< n'_\rho$, using $n'_{-1}=0$.
\end{prop}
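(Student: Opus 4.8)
The plan is to build all the data by induction on $k$, carrying along an auxiliary hypothesis that keeps the picture near the origin linear-like. Write $\Lambda(z,w):=(bz+aw,az)$ for the common linear part of the $F_k$; it is symmetric, with eigenvalues $\lambda_s,\lambda_u$ satisfying $0<|\lambda_s|<1<|\lambda_u|$ and eigenlines $E^s=\{\lambda_s w=az\}$ and $E^u$. The hypothesis I would maintain is that $F_{k-1}$ coincides, on $D(0,R_{k-1})$ away from finitely many small pairwise disjoint disks whose union stays a definite distance from $E^s$ and $E^u$, with a uniformly hyperbolic perturbation of $\Lambda$ — close enough to $\Lambda$ that an orbit entering $D(0,R_{k-1})$ on a small neighbourhood of $E^s$ is driven into $0$ at rate $|\lambda_s|$ and an orbit leaving along $E^u$ escapes at rate $|\lambda_u|$, with estimates uniform over all maps satisfying the hypothesis with the same constants. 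The base case is trivial: take $f_0(z)=bz$, any $R_0>0$, $n_0=0$, $P_0=(z_0,0)$ with $|z_0|>R_0+5\theta_0$. Starting from a linear seed costs nothing: the transcendence of the limiting $f$ will come only from the bumps that accumulate at $\infty$.

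For the inductive step I would first freeze the tolerances: having $F_{k-1}$ and $R_{k-1}$, pick $\epsilon_k\le 2^{-k}$ and an auxiliary $\delta_k>0$, both so small — depending on the hyperbolicity constants, on $R_{k-1}$, and on the target radius $1/k$ of (v) — that every entire $F$ of the prescribed form with $\|F-F_{k-1}\|_{D(0,R_{k-1})\times\C}\le\epsilon_k$ and $\|f-bz\|_{D(0,R_k)}\le\delta_k$ off its bumps still satisfies the auxiliary hypothesis with uniform constants. Then I would design a routing template in the region $\{|z|>R_{k-1}\}$, where $f_k$ is unconstrained: a long finite string of new points $z_{n_{k-1}+1},\dots,z_{n_k}$, all of modulus $>R_{k-1}$ and placed off neighbourhoods of $E^s,E^u$, together with prescribed values and small derivatives for $f_k$ at them. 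The first block of the string carries $P_{n_{k-1}}$, in an arbitrarily large number $C_k^{\mathrm{in}}$ of steps each of which can be made contracting since there $dF_k$ is nearly $\begin{pmatrix}0&a\\a&0\end{pmatrix}$, to a point $z_m$ with $R_{k-1}<|z_m|<R_{k-1}/|\lambda_s|$, and then sets $z_{m+1}:=\lambda_s z_m$; since $z_m$ is outside $D(0,R_{k-1})$ this last step is still free, and because $\lambda_s$ is a root of $\lambda^2-b\lambda-a^2=0$ the point $P_{m+1}$ lands on $E^s$ and the \emph{subsequent} steps, now governed by the linear-like dynamics of $F_k$ inside $D(0,R_{k-1})$, automatically stay near $E^s$ and shrink like $\lambda_s^{\,n}$, reaching $B(0,1/k)$ after $O(\log k)$ more steps — this supplies the index for (v). After the orbit leaves $D(0,R_{k-1})$ along $E^u$, a second block of free steps pushes it outward to a point $z_{n_k}$ of modulus as large as we wish, with the final step clearing the annulus $\{R_k-\theta_\rho\le|z|\le R_k+5\theta_k\}$ in one move. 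One then chooses $\theta_k$ (any decreasing null sequence), picks $R_k$ anywhere in $(\max_{n<n_k}(|z_n|+\theta_{\rho(n)}),\,|z_{n_k}|-5\theta_k)$ — nonempty because $|z_{n_k}|$ was pushed far out, which also gives $R_k>R_{k-1}$, and $R_k\to\infty$ since this window can be taken $\gg R_{k-1}$ — records $n_k$, and leaves $n'_k$ to be the closest-approach index of the \emph{next} iteration, so that $n_k<n'_k<n_{k+1}$ will hold. Finally the $\beta_n$ are chosen together with the $z_n$: let $\beta_n$ collapse geometrically through the $C_k^{\mathrm{in}}$ contracting steps, enough to absorb the bounded expansion (by at most $|\lambda_u|$ per step) during the stable and unstable phases, so that the peak of $\beta_n$ over the new segment is $\le\theta_{\rho(n)}/2$; since $C_k^{\mathrm{in}}$ is ours to enlarge this also forces $\beta_n\to 0$. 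Property (iii) for a given $n$ then follows from $\|dF_k\|\le|f_k'(z_n)|+\sqrt2\,a$ on the relevant disk together with the choice of the $\beta$'s; for $n$ from earlier segments it persists under the $\epsilon_k$-perturbation by strictness of $\subset\subset$.

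With the template fixed I would produce $f_k$ by Runge approximation (Lemma~\ref{rungeappr}) of the holomorphic model ``$z\mapsto bz$ on $\overline{D(0,R_k)}$ minus the new routing disks and thin channels joining them to the exterior, together with the prescribed values at the routing nodes'' — the underlying compact set has connected complement, so the approximant is entire — and then (1) enforce the finitely many interpolation conditions $f_k(z_n)=z_{n+1}-a^2z_{n-1}$ for all $n<n_k$ exactly, by a correction whose coefficients are bounded by the (tiny) Runge error and which is therefore small on $D(0,R_{k-1})$, and (2) adjust the $1$-jet at the origin so that $f_k(z)=bz+O(|z|^2)$. Conditions (i) and (ii) are then immediate, and (iii)–(v) follow from the dynamical discussion together with the uniform estimates valid within the frozen tolerances.

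The part I expect to be the real obstacle is exactly the first block of the routing — bringing the far orbit point genuinely back to the origin. The H\'enon relation $w_{n+1}=az_n$ couples the coordinates, so the orbit cannot be dragged near $0$ by free steps alone: once a point has small first coordinate it lies in $D(0,R_{k-1})$, where $f_k$ is no longer at our disposal, and ``being aimed at $0$'' then propagates backwards along the orbit, which would force $P_{n_{k-1}}$ to be small, a contradiction. The auxiliary hypothesis is precisely what circumvents this: because $F_k$ is linear-like on all of $D(0,R_{k-1})$ (away from bumps kept off the eigenlines), the stable manifold of $0$ reaches out to $\partial D(0,R_{k-1})$ along $E^s$, so the orbit has only to be delivered — by a single free step, $z_{m+1}=\lambda_s z_m$ — onto $E^s$ just inside the disk, after which the ambient dynamics finishes the descent with uniform control. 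Everything beyond that point — juggling the five conditions at once, and in particular the role of $n'_k$ and of the index $\rho(n)$, whose only purpose is to reserve enough room around $R_k$ and enough smallness for the $\beta_n$ to be inherited into later segments — is bookkeeping, but it is what forces the precise shape of the statement.
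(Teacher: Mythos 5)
The argument falls into three pieces: producing a finite orbit that goes in to $B(0,1/k)$ and back out (the new oscillation), joining it to the previously built orbit (the contracting detour), and the Runge/bookkeeping layer. Your treatment of the second and third pieces matches the paper's in spirit and would go through. The gap is in the first piece, and it is not a technicality.

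You propose to deliver the orbit onto the stable eigenline $E^s$ of the linear part $\Lambda$ by one free step, and then let the auxiliary ``linear-like'' hypothesis carry it down to $B(0,1/k)$. The problem is that $E^s$ is not the stable manifold of $F_{k}$: it is only the tangent line to $W^s_{F_k}(0)$ at the origin. A point placed on $E^s$ near $\partial D(0,R_{k-1})$ is, in general, at some positive distance $d$ from $W^s_{F_k}(0)$, and under a hyperbolic map that distance is amplified by roughly $|\lambda_u|$ per step. The number of steps needed to go from radius $\sim R_{k-1}$ down to radius $\sim 1/k$ grows like $\log(kR_{k-1})$ (not $O(\log k)$: $R_{k-1}\to\infty$), so the accumulated transverse error is of order $d\,|\lambda_u|^{\log(kR_{k-1})/|\log\lambda_s|}$, and for this to stay below $1/k$ you would need $d$ to shrink faster than any control the induction actually gives you. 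The hypothesis you want to maintain---that $f_k$ is close to $bz$ on $D(0,R_{k-1})$ away from bumps off $E^s,E^u$---does not make $d$ small: the cumulative perturbation $\|f-bz\|_{D(0,R_0)}$ is a fixed nonzero constant ($\sum_j\epsilon_j$), it does not decrease with $k$, and $C^0$-closeness of $F_k$ to $\Lambda$ does not imply that $E^s$ approximates $W^s_{F_k}(0)$ well at macroscopic distance from the origin. In short: being $\epsilon$-close to $\Lambda$ does not make $E^s$ an approximately invariant curve, and the unstable direction punishes the difference. This is exactly the difficulty the paper sidesteps by invoking the Lambda Lemma (Lemma~\ref{lambdalemma}): rather than aiming for $E^s$, it takes a genuine $F_k$-orbit $(Q_j)$ near the actual stable and unstable manifolds $W^s_{F_k}(0)$, $W^u_{F_k}(0)$, with the quantitative shadowing the lemma provides, and then Runge-approximates $f_{k+1}$ to agree with $f_k$ at the $Q_j$ so this orbit is preserved exactly. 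Your ``free step onto $E^s$'' is in effect a replacement for the Lambda Lemma that you have not justified, and I do not see how to justify it without re-proving a version of the lemma. If you replace that step by the Lambda Lemma applied to $F_k$ (which has a saddle at $0$ because $f_k(z)=bz+O(z^2)$), the rest of your outline---the contracting detour of adjustable length, the Runge approximation on a polynomially convex set, the exact interpolation at orbit points and the $1$-jet at $0$, the bookkeeping for $\rho$, $\theta_k$, $R_k$, $\beta_n$, $n_k$, $n'_k$---lines up with the paper's proof.
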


Before proving this proposition, let us show that it implies the existence of a wandering Fatou component.
By (i), the maps $F_k$ converge uniformly on compact subsets to a holomorphic automorphism $F(z,w)=(f(z)+aw,az)$. By (ii), (iv) and (v) it follows that  $(P_n)$ is an oscillating orbit for $F$, that is, it is unbounded and it admits a subsequence converging to the origin.
 Since by (iii) for all $j\in \N$ the iterate images of $B(P_j,\beta_j)$ have uniformly bounded Euclidean diameter (in fact, the diameter goes to zero), each ball $B(P_j,\beta_j)$ is contained in the Fatou set of $F$.

\begin{lemma}
If $i\neq j$, then $P_i$ and $P_j$ are in different Fatou components of $F$, and hence  $F$ has an    oscillating wandering domain.
\end{lemma}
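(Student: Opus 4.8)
The plan is to suppose that $P_i$ and $P_j$ lie in a common Fatou component $\Omega$ for some $i<j$, and derive a contradiction. Taking the limit $k\to\infty$ in (i) and (ii) shows that $F(P_n)=P_{n+1}$ for all $n$, so $(P_n)$ is a genuine $F$-orbit. Since distinct Fatou components are disjoint and $F^{j-i}(P_i)=P_j$ lies in both $F^{j-i}(\Omega)$ and $\Omega$, we get $F^{j-i}(\Omega)=\Omega$. Writing $G:=F^{j-i}$, the component $\Omega$ is $G$-invariant, $G$ restricts to an automorphism of $\Omega$, and the whole forward orbit $(P_n)_{n\ge i}$ is contained in the finite union $\Omega\cup F(\Omega)\cup\cdots\cup F^{j-i-1}(\Omega)$ of Fatou components, cyclically permuted by $F$. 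Recall that the origin is a saddle fixed point of $F$, hence of $G$, with eigenvalues $\lambda_s^{\,j-i},\lambda_u^{\,j-i}$ satisfying $|\lambda_s^{\,j-i}|<1<|\lambda_u^{\,j-i}|$; in particular $0$ lies in the Julia set, so $0\notin\Omega$.

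\textbf{Locating the right component.} By (iv) and $R_k\to\infty$ the orbit $(P_n)$ is unbounded, while by (v) it admits a subsequence $P_{j(k)}\to 0$. I would now run a pigeonhole argument over the finitely many components $F^t(\Omega)$: using that $F^t(0)=0$ (so a sequence converging to $0$ still converges to $0$ after applying a fixed power of $F$), and that between two consecutive deep returns near $0$ the orbit must reach norm tending to $\infty$, one can produce a single $G$-invariant component $\Omega'$ among the $F^t(\Omega)$ and a point $Q_0\in\Omega'$ (one of the $P_n$) whose forward $G$-orbit \emph{both} has a subsequence $G^{n_l}(Q_0)\to 0$ and has an unbounded subsequence. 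Thus this orbit approaches the saddle without converging to it.

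\textbf{The contradiction at the saddle.} By normality of $\{G^m\}$ on $\Omega'$, pass to a subsequence with $G^{n_l}\to H$ locally uniformly, $H\colon\Omega'\to\P^2$ holomorphic and $H(Q_0)=0$; since $H(Q_0)\in\C^2$, the argument of Lemma~\ref{projectiveimpliesintrinsic} gives $H(\Omega')\subset\C^2$, and since $|\operatorname{Jac}G|=|a|^{2(j-i)}<1$ we have $\det dH\equiv0$. Choose a small ball $B'\subset\subset\Omega'$ about $Q_0$ whose image $H(\overline{B'})$ lies in the linearization neighbourhood $B(0,r_0)$ of the saddle; then the $G^{n_l}(\overline{B'})$ are eventually small open subsets of $B(0,r_0)$ containing the points $G^{n_l}(Q_0)\to 0$. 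Using the invariant cone field about the unstable eigendirection near $0$, I would either (a) extract a point $p_l\in B'\subset\Omega'$ with $G^{n_l}(p_l)$ on the local stable manifold $W^s_{\mathrm{loc}}(0)$, so that $G^m(p_l)\to 0$ yet $\|d_{p_l}(G^m)\|\to\infty$ as $m\to\infty$ (exploiting that each $G^{n_l}$ has invertible differential to push a transverse vector forward), contradicting that normality forces $d_{p_l}(G^m)$ to stay bounded along subsequences with $G^m(p_l)\to 0\in\C^2$; or (b), more in the spirit of the construction, track the unstable stretching of the shrinking ball: by (iii), $F^m(B(P_i,\beta_i))\subset\subset B(P_{i+m},\beta_{i+m})$ for all $m$, so every image has diameter at most $2\sup_n\beta_n<\infty$, whereas each deep passage of the orbit near $0$ (with approach distance $\le 1/k\to 0$, hence over a number of iterates tending to $\infty$) stretches a ball in the unstable direction by a factor tending to $\infty$ — impossible.

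\textbf{Main obstacle and conclusion.} The delicate step is precisely this last one: converting ``the orbit enters along the stable direction and exits along the unstable direction of the saddle'' into a rigorous contradiction. The subtle points are that the limit map $H$ is degenerate, so one must verify that $G^{n_l}(B')$ genuinely meets the stable manifold (or, in route (b), that the exponential stretching outpaces the decay rate of the radii $\beta_n$ fixed in the construction); this is the ``ad hoc version of the second classical argument'' mentioned before the lemma, and it is where the bookkeeping of Proposition~\ref{prop:claim1} (the buffers $\theta_k$, the control $\beta_n\le\theta_\rho/2$, and properties (iii)--(v)) is used. Granting the contradiction, the points $P_n$ lie in pairwise distinct Fatou components, so the component $\Omega=\Omega_0$ of $P_0$ is not preperiodic, i.e.\ it is a wandering domain; and since $(P_n)$ is unbounded with a subsequence converging to $0$, this wandering domain is oscillating.
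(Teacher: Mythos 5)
Your proposal diverges substantially from the paper's argument, and the divergence is exactly where your proof has an admitted gap. The paper's contradiction is short and bypasses all hyperbolicity at the saddle; it hinges on one observation you state but do not fully exploit: since $\beta_n\to 0$ and $F^m(B(P_i,\beta_i))\subset\subset B(P_{i+m},\beta_{i+m})$, \emph{every} limit function of $(F^m)$ on the Fatou component $\Omega$ of $P_i$ is constant (shrinking diameters plus the identity principle). With that in hand the paper argues as follows: suppose $P_i,P_j\in\Omega$ with $k=j-i$. Take a neighbourhood $\mathcal N(0)$ of the saddle containing no periodic points of period $\le k$; the orbit $(P_n)$ enters and leaves a compact of $\mathcal N(0)$ infinitely often, so some subsequence $P_{n_l}\to z\in\mathcal N(0)\setminus\{0\}$. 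If $P_i,P_j$ were in one component, the corresponding subsequence of iterates would tend to a constant $c$, forcing $P_{n_l}\to c$ and $P_{n_l+k}\to c$ simultaneously; but $P_{n_l}\to z$ and $P_{n_l+k}\to F^k(z)\neq z$, a contradiction. No linearization, cone fields, or stretching estimates are needed.

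Your route, by contrast, goes through the saddle dynamics: invariant cones, unstable stretching, and a derivative blow-up reminiscent of the escaping-domain section. You yourself flag that "the delicate step is precisely this last one" and leave it unverified. That is a genuine gap: you would need to show the shrinking balls $G^{n_l}(B')$ really do intersect $W^s_{\mathrm{loc}}(0)$ (or that the unstable stretching per deep passage outpaces the contraction rate $c^N$ built into $\beta_n$), and neither is established. The clean route is the one you nearly had but did not push through: note that $\beta_n\to 0$ forces all limit maps to be \emph{constant} (not merely degenerate with $\det dH\equiv 0$), and then a limit point $z\neq 0$ that is not $k$-periodic immediately yields the contradiction via continuity of $F^k$. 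Your concluding paragraph about wandering and oscillating is fine once the separation of components is granted.
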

\begin{proof}

For all $j\in \N$, denote $\Omega_j$ the Fatou component containing $B(P_j,\beta_j)$. Since $\beta_n\to 0$ as $n\to\infty$, by (iii) and by identity principle it follows that all limit functions on each $\Omega_j$ are constants.

Assume $j > i$ and let $k = j-i$.
Let $\mathcal{N}(0)$ be a neighborhood of the origin that contains no periodic points of order less than or equal to $k$. Since the orbit $(P_n)$ enters and leaves a compact subset of $\mathcal{N}(0)$ infinitely often, there must be a subsequence $(n_j)$ for which $P_{n_j}$ converges to a point $z \in \mathcal{N}(0) \setminus \{0\}$. But then $P_{n_j + k}$ converges to $F^k(z) \neq z$, which implies that $P_i$ and $P_j$ cannot be contained in the same Fatou component.
\end{proof}

We will prove Proposition \ref{prop:claim1} by induction on $k$. We  start the induction  by letting $R_0:=1$, $\theta_0:=1$, $\beta_0:=\frac{1}{2}$ and  $P_0:=(z_0,w_0)$ with $|z_0| > 6$. We set $F_0(z,w):= (bz+aw,az)$ and $n_0:=0$.

Let us now suppose that (i)-(v) hold for certain $k$, and let us proceed to define $F_{k+1}$ and the  points $(P_n)_{n_{k}< n\leq n_{k+1}}$. This is done in two steps. The first step relies upon the classical Lambda Lemma \ref{lambdalemma}:

\begin{lemma}[Lambda Lemma, see e.g. \cite{PdM82}]\label{lambdalemma}
Let $G$ be a holomorphic automorphism of $\mathbb C^2$ with a saddle fixed point at the origin. Denote by $W^s(0)$ and $W^u(0)$  the stable and unstable manifolds respectively.
Let $p \in W^s(0)\setminus\{0\}$ and $q \in W^u(0)\setminus\{0\}$, and let $D(p)$ and $D(q)$ be holomorphic disks through $p$ and $q$, respectively transverse to $W^s(0)$ and $W^u(0)$. Let  $\epsilon >0$. Then there exists $N\in \N$  and  $N_1>2N+1$, and  a point   $x \in D(p)$ with $G^{N_1}(x) \in D(q)$  such that  $\|G^n(x) - G^n(p)\| < \epsilon$ and $\|G^{N_1-n}(x) - G^{-n}(q)\|< \epsilon$ for $0\leq n \le N$, and $\|G^n(x)\| < \epsilon$ when $N < n < N_1 - N$.
\end{lemma}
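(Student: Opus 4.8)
The plan is to deduce the statement from the classical $\lambda$-lemma, reducing everything to a small neighborhood $V$ of the origin. Since $p\in W^s(0)$ and $q\in W^u(0)$, for all $M,M'$ large enough the points $p':=G^M(p)$ and $q':=G^{-M'}(q)$ lie (deep) in the local stable and unstable manifolds $W^s_{loc}(0),W^u_{loc}(0)\subset V$, and $D(p'):=G^M(D(p))$, $D(q'):=G^{-M'}(D(q))$ are holomorphic disks through $p',q'$ transverse to $W^s_{loc}(0),W^u_{loc}(0)$ respectively, transversality being preserved by the diffeomorphism $G$. Choosing holomorphic coordinates on $V$ that straighten the two transverse submanifolds $W^s_{loc}(0)=\{w=0\}$ and $W^u_{loc}(0)=\{z=0\}$, the map $G$ near $0$ has the form $G(z,0)=(g_s(z),0)$ with $|g_s'(0)|=|\lambda_s|<1$ and $G(0,w)=(0,g_u(w))$ with $|g_u'(0)|=|\lambda_u|>1$, and one has the usual invariant cone fields. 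Everything constructed inside $D(p')$ will be pulled back by $G^{-M}$ to land in the original disk $D(p)$.

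Next comes the $\lambda$-lemma proper. Write $D(p')$ locally as a graph $\{z=\phi(w):|w|<r_0\}$ with $\phi(0)=p'_1$, and consider the sub-disks $D_n:=\{z=\phi(w):|w|<\delta_n\}$ with $\delta_n\sim r|\lambda_u|^{-n}$ chosen so that $G^j(D_n)\subset V$ for all $0\le j\le n$. A direct estimate using the contraction in the $z$-direction and the expansion in the $w$-direction (the nonlinear terms of $G$ being controlled by the smallness of $V$) shows that $\Delta_n:=G^n(D_n)$ is a holomorphic disk converging in the $C^1$ topology to a fixed piece of $W^u_{loc}(0)$ containing $q'$; in particular, for $n$ large $\Delta_n$ is transverse to $D(q')$ and meets it in a single point $y_n$ with $y_n\to q'$ as $n\to\infty$. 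Pulling $y_n$ back through the injective map $G^n|_{D_n}$ gives $x_n'\in D(p')$ with $G^n(x_n')=y_n$; the same estimate shows the $w$-coordinate of $x_n'$ has size $\sim|\lambda_u|^{-n}$, hence $x_n'\to p'$ as $n\to\infty$. Now set $x:=G^{-M}(x_n')\in D(p)$ and $N_1:=M+n+M'$, so that $G^{N_1}(x)=G^{M'}(y_n)\in D(q)$; and $N_1>2N+1$ as soon as $n$ is large, since $M,M',N$ are fixed.

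It remains to verify the three shadowing estimates, and here one fixes first the desired (possibly large) $N$, then takes $n$ large. For the initial stretch $0\le j\le N$ one writes $G^j(x)=G^{j-M}(x_n')$ and compares it with $G^{j-M}(p')=G^j(p)$: for $j\le M$ this follows from continuity of the finitely many maps $G^{j-M}$ together with $x_n'\to p'$, while for $M<j\le N$ the difference $G^{j-M}(x_n')-G^{j-M}(p')$ is governed by the unstable expansion acting on the $w$-component of $x_n'-p'$, which has size $\sim|\lambda_u|^{j-M-n}\to0$ once $n\gg N$. The final stretch $G^{N_1-j}(x)=G^{M'-j}(y_n)$ for $0\le j\le N$ is handled symmetrically, using $y_n\to q'$ and the backward contraction along $W^u_{loc}(0)$. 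For the middle range $N<j<N_1-N$ one uses that in the straightened coordinates $G^j(x)=G^{j-M}(x_n')$ has $z$-coordinate of size $\lesssim|\lambda_s|^{j-M}|p'_1|+o(1)$, which is small once $j-M$ is large, and $w$-coordinate of size $\sim|\lambda_u|^{j-M-n}$, which is small as long as $j-M\le n-\mathrm{const}$; shrinking $V$, then enlarging $N$, then enlarging $n$ in this order makes the whole middle orbit lie within $\epsilon$ of the origin. The main obstacle is precisely this last point: controlling uniformly the long "dwell near the origin", i.e. that the orbit of $x$ stays $\epsilon$-close to $0$ throughout $N<j<N_1-N$, which is the hyperbolic heart of the $\lambda$-lemma — one balances the stable contraction (which kills the $z$-coordinate after finitely many steps) against the unstable expansion run backwards from $y_n$ (which keeps the $w$-coordinate small until close to step $n$), absorbing the nonlinear terms by smallness of the neighborhood. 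Everything else — the reduction to the local picture, the $C^1$-convergence $\Delta_n\to W^u_{loc}(0)$, the transverse intersection with $D(q')$, and the end-point shadowing — is soft, relying only on continuity, compactness, and transversality being an open condition.
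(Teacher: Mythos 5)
The paper does not prove this lemma; it is stated as classical with a citation to Palis--de Melo, so there is no "paper's proof" to compare against. Your sketch is a correct rendering of the standard Inclination Lemma / graph-transform argument, including the two non-trivial adaptations the paper's formulation requires beyond the bare Inclination Lemma: (a) reducing to the local picture by pushing $p$ forward by $G^M$ and $q$ backward by $G^{-M'}$ into a straightened neighborhood where cone estimates hold, and (b) tracking the full orbit rather than just the final image disk. You correctly identify the only genuinely delicate point — keeping $G^j(x)$ uniformly $\epsilon$-close to the origin throughout the long middle range $N < j < N_1 - N$ — and handle it by the expected balance: the $z$-coordinate decays like $|\lambda_s|^{j-M}$ once $j$ exceeds $M$ by a fixed amount, while the $w$-coordinate, of size roughly $|\lambda_u|^{j-M-n}$, stays small as long as $j$ is more than a fixed amount below $M+n$, and the nonlinear corrections are absorbed by shrinking $V$. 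The order of choices ($\epsilon \Rightarrow V \Rightarrow M, M' \Rightarrow N \Rightarrow n$, with $N_1 = M + n + M'$) is stated consistently. One small imprecision: "$\Delta_n$ meets $D(q')$ in a single point" is stronger than needed and not quite what transversality gives globally; you only need one intersection point near $q'$, which the $C^1$-convergence of $\Delta_n$ to $W^u_{\mathrm{loc}}$ together with the transversality of $D(q')$ provides. That does not affect the argument.
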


Using the Lambda Lemma we find a finite $F_k$-orbit $(Q_j)_{0\leq j\leq M}$, the \emph{new oscillation}, which goes inward along the stable manifold of $F_k$, reaching the ball $B(0,\frac{1}{k+1})$, and then outwards along the unstable manifold of $F_k$.

The second step of the proof relies upon another classical result:
\begin{lemma}[Runge approximation]\label{rungeappr}
Let $K\subset \C$ be a polynomially convex compact subset (recall that $K$ is polynomially convex  if and only if $\C\setminus K$ is connected). Let $h\in \mathcal{O}(K)$, and let $\{p_i\}_{0\leq i\leq q}$ be a set of points in $K$. Then for all $\epsilon >0$ there exists an entire holomorphic function $f\in \mathcal{O}(\C)$ such that $$\|f-h\|_K\leq \epsilon$$ and such that
$$f(p_i)=h(p_i),\, f'(p_i)=h'(p_i) \quad \forall \,0\leq i\leq q.$$
\end{lemma}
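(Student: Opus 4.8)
The plan is to reduce the statement to the classical Runge approximation theorem by absorbing the interpolation conditions into an explicit polynomial factor. Throughout we may assume the points $p_0,\dots,p_q$ are pairwise distinct, and we recall that $h\in\mathcal O(K)$ means that $h$ is holomorphic on some open neighborhood $U\supset K$.

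First I would carry out a Hermite interpolation step: let $P$ be the (unique) polynomial of degree at most $2q+1$ with $P(p_i)=h(p_i)$ and $P'(p_i)=h'(p_i)$ for $0\le i\le q$, and set $W(z):=\prod_{i=0}^{q}(z-p_i)^2$. Since $h-P$ vanishes to order at least $2$ at each $p_i$ (both its value and its first derivative agree with those of $P$ there), the quotient $(h-P)/W$ has only removable singularities at the $p_i$ and hence extends to a function holomorphic on all of $U$. This is precisely why $W$ is taken with double rather than simple zeros: it matches the two-jet of the data. Next I would invoke Runge's theorem — which applies exactly because $K$ is polynomially convex, i.e. $\C\setminus K$ is connected — to approximate $(h-P)/W$ uniformly on $K$ by a polynomial $G$ with $\|G-(h-P)/W\|_K<\epsilon/(1+\|W\|_K)$.

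Finally I would assemble $f:=P+W\cdot G$, which is a polynomial, in particular entire. Because $W$ has a double zero at each $p_i$ we have both $W(p_i)=0$ and $W'(p_i)=0$, so $f(p_i)=P(p_i)=h(p_i)$ and $f'(p_i)=P'(p_i)=h'(p_i)$; and $\|f-h\|_K=\|W\cdot(G-(h-P)/W)\|_K\le\|W\|_K\cdot\|G-(h-P)/W\|_K<\epsilon$. There is no real obstacle in this argument; the only points needing a little care are the verification that $(h-P)/W$ is holomorphic on a full neighborhood of $K$ (handled by the order-two vanishing just discussed) and the correct use of Runge's theorem in the form that exploits polynomial convexity of $K$. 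If one wishes to cite Runge only in its bare uniform-approximation form, this reduction is the standard device for adding finite jet-interpolation constraints.
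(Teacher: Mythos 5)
Your proof is correct. The paper presents this lemma as a classical result and does not supply a proof of its own, so there is no paper argument to compare against; your reduction — Hermite interpolation by a polynomial $P$ matching the 2-jets at the $p_i$, division by $W(z)=\prod_i(z-p_i)^2$ to absorb the constraints, classical Runge approximation of the holomorphic function $(h-P)/W$ on the polynomially convex set $K$, and reassembly as $f=P+W\cdot G$ — is the standard device for adding finite jet-interpolation to Runge, and all the details (removability at the $p_i$ by order-two vanishing, $W(p_i)=W'(p_i)=0$ killing the $G$-contribution to the value and first derivative, and the $1+\|W\|_K$ normalization) are handled correctly.
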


Using Runge approximation we find a map $F_{k+1}$ connecting the previously constructed finite orbit  $(P_n)_{0\leq n\leq n_{k}}$ with the new oscillation $(Q_j)_{0\leq j\leq M}$ via a finite orbit along which $F_{k+1}$ is sufficiently contracting. The contraction neutralizes possible expansion along the new oscillation $(Q_j)_{0\leq j\leq M}$, and we refer to this connecting orbit as the \emph{contracting detour}.

\begin{figure}[t]
\centering
\includegraphics[width=2.5in]{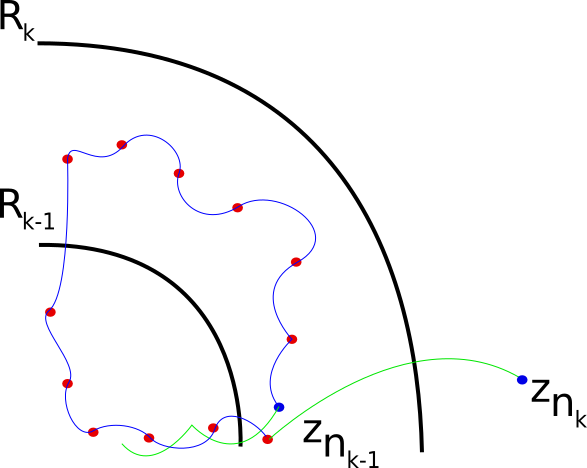}
\caption{The first coordinates of the orbit $(P_n)$.}
\label{figure:oscillation}
\end{figure}

\subsection{Finding the new oscillation}

\begin{lemma}\label{lemma:claim2} There exist a finite $F_k$-orbit $(Q_j):=(z_j',w_j')_{0\leq j\leq M}$ intersecting the ball $B(0,\frac{1}{k+1})$ and a small enough  $\theta_{k+1}>0$   such that
the three disks
$$
\overline D(z_{n_k},\theta_{k}),  \quad  \overline D(w'_0/a,\theta_{k+1}), \quad \overline D(z'_{M},\theta_{k+1})
$$
are pairwise disjoint, and disjoint from the polynomially convex set
$$
K:=\overline D(0, R_k)\cup  \bigcup_{0\leq i< M}\overline D(z'_i,\theta_{k+1}).
$$
\end{lemma}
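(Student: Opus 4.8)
Here is how I would proceed.

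The new oscillation $(Q_j)$ will be an orbit segment supplied by the Lambda Lemma (Lemma~\ref{lambdalemma}) applied to $G=F_k$, with $p\in W^s(0)$ and $q\in W^u(0)$ chosen carefully, and with $\theta_{k+1}$ chosen last, small relative to the resulting finite orbit. The one place where the special form $F_k(z,w)=(f_k(z)+aw,az)$ is used is this: $\pi_1$ is nonconstant on $W^s(0)$ and on $W^u(0)$. Indeed, if $W^s(0)\subset\{z=c\}$ then $W^s(0)=F_k(W^s(0))\subset F_k(\{z=c\})\subset\{w=ac\}$, so $W^s(0)$ would be a point; likewise for $W^u(0)$. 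Since $W^s(0)$ and $W^u(0)$ are biholomorphic to $\C$ and vanish at the origin, in suitable parametrizations $\pi_1|_{W^s(0)}$ and $\pi_1|_{W^u(0)}$ are nonconstant entire functions whose maximum modulus over the disk of radius $s$ increases continuously from $0$ (at $s=0$) to $+\infty$.

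Because $|z_{n_k}|>R_k+5\theta_k$, I can therefore pick $p\in W^s(0)\setminus\{0\}$ so that $\pi_1(F_k^{-1}(p))=\pi_2(p)/a$ has modulus $R_k+\theta_k$ and is a point of maximal modulus of $\pi_1$ on its circle; then every forward iterate of $F_k^{-1}(p)$ — in particular $p$ and all $F_k^j(p)$, $j\ge0$ — has first coordinate of modulus $\le R_k+\theta_k$. Similarly pick $q\in W^u(0)\setminus\{0\}$ so that $\pi_1(q)$ has modulus $R_k+2\theta_k$ and is a point of maximal modulus of $\pi_1$ on its circle; then every backward iterate $F_k^{-n}(q)$, $n\ge0$, has first coordinate of modulus $\le R_k+2\theta_k$. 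By perturbing $p,q$ slightly within these constraints I may also assume $\pi_1(F_k^{-1}(p))$ and $\pi_1(q)$ are distinct from each other and from every element of the set $\{\pi_1(F_k^j(p)):j\ge0\}\cup\{\pi_1(F_k^{-n}(q)):n\ge1\}$ (countably many conditions, each excluding only a discrete subset); since these two forward/backward orbits accumulate only at the origin, $\pi_1(F_k^{-1}(p))$ and $\pi_1(q)$ then lie at a definite positive distance from them, depending only on $p,q$.

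Now apply the Lambda Lemma to $G=F_k$ with these $p,q$, arbitrary transverse disks $D(p),D(q)$, and $\epsilon>0$ fixed first with $\epsilon<\min\{\tfrac1{k+1},\tfrac{\theta_k}{100}\}$ and smaller than the positive distances just produced. It yields $N$, $N_1>2N+1$ and $x\in D(p)$ with $G^{N_1}(x)\in D(q)$ and the stated shadowing estimates. Set $M:=N_1$ and $Q_j:=F_k^j(x)=(z_j',w_j')$. For $N<j<N_1-N$ we have $\|Q_j\|<\epsilon<\tfrac1{k+1}$, so $(Q_j)$ meets $B(0,\tfrac1{k+1})$. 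The $n=0$ shadowing estimates give $\|x-p\|<\epsilon$ and $\|G^{N_1}(x)-q\|<\epsilon$, hence $|w_0'/a-\pi_1(F_k^{-1}(p))|<\epsilon/|a|$ and $|z_M'-\pi_1(q)|<\epsilon$; so $|w_0'/a|$ and $|z_M'|$ lie in the bands $(R_k,R_k+2\theta_k)$ and $(R_k+\theta_k,R_k+3\theta_k)$ respectively, they and $z_{n_k}$ are pairwise distinct, and $w_0'/a,z_M'$ each avoid $\{z_0',\dots,z_{M-1}'\}$. Moreover, by the max-modulus choices the first coordinates $z_0',\dots,z_{M-1}'$ all have modulus $\le R_k+2\theta_k+\epsilon$.

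It remains to choose $\theta_{k+1}>0$ small enough, depending on the finite set $\{z_0',\dots,z_M'\}$ and on $w_0'/a$, so that: every $\overline D(z_i',\theta_{k+1})$ with $|z_i'|\le R_k$ is contained in $\overline D(0,R_k)$; the disks $\overline D(w_0'/a,\theta_{k+1})$ and $\overline D(z_M',\theta_{k+1})$ are disjoint from one another (their centres have different moduli $\approx R_k+\theta_k$ versus $\approx R_k+2\theta_k$), disjoint from $\overline D(0,R_k)$, disjoint from $\overline D(z_{n_k},\theta_k)$ (their centres have modulus $<R_k+3\theta_k$ while $|z_{n_k}|-\theta_k>R_k+4\theta_k$), and disjoint from each $\overline D(z_i',\theta_{k+1})$, $i<M$ (using $w_0'/a,z_M'\notin\{z_0',\dots,z_{M-1}'\}$); and $\overline D(z_{n_k},\theta_k)$ is disjoint from all of $K$ because it lies in $\{|z|>R_k+4\theta_k\}$ while every $\overline D(z_i',\theta_{k+1})\subset\{|z|<R_k+2\theta_k+\epsilon+\theta_{k+1}\}$. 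With such $\theta_{k+1}$ the set $K$ is the closed disk $\overline D(0,R_k)$ together with finitely many small closed disks lying in a thin collar of $\partial D(0,R_k)$; its complement is $\{|z|>R_k\}$ with finitely many small sets near the circle removed, which is connected, so $K$ is polynomially convex. This proves Lemma~\ref{lemma:claim2}. The main obstacle is the third requirement above — keeping the finitely many first coordinates of the new orbit that stick out of $\overline D(0,R_k)$ uniformly away from $z_{n_k}$, and pinning down $w_0'/a$ and $z_M'$; the resolution is exactly the max-modulus choice of $p$ and $q$, which confines the forward orbit of $F_k^{-1}(p)$ and the backward orbit of $q$ to the modulus band $(0,R_k+2\theta_k]$, and this step is where the triangular form of $F_k$ enters, through the nonconstancy of $\pi_1$ on the $\C$-like invariant manifolds. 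Everything after that is a matter of shrinking first $\epsilon$ and then $\theta_{k+1}$.
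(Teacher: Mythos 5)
Your proof is correct and follows essentially the same approach as the paper: it applies the Lambda Lemma after positioning the points $p\in W^s(0)$ and $q\in W^u(0)$ so that the orbit segments through them have first-coordinate modulus bounded by the chosen levels, which in the paper is achieved by taking linearizing coordinates $(\varphi^s,\psi^s)$ and a parameter of minimal modulus hitting a target value of $|\psi^s|$ (equivalently, your max-modulus formulation), and then perturbing to avoid the finitely many bad coincidences before finally shrinking $\theta_{k+1}$. The only cosmetic differences are your explicit justification that $\pi_1$ is nonconstant on the invariant manifolds and your choice of the radii $R_k+\theta_k$, $R_k+2\theta_k$ in place of the paper's $|z_{n_k}|-4\theta_k$, $|z_{n_k}|-2\theta_k$; both choices serve the same purpose.
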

\begin{proof}
Let $(\varphi^s,\psi^s)\colon \C\to W^s_{F_k}(0,0)$ be  linearizing coordinates for the stable manifold of the map $F_k$.
Let $\zeta\in \C$ be a point with minimal absolute value such that
\begin{equation}\label{conditionstable}
|\psi^s(\zeta)|= a(|z_{n_{k}}| - 4\theta_{k}).$$  It is easy to see that $$|\psi^s(\eta)|< |\psi^s(\zeta)|, \quad \forall \eta\colon |\eta|< |\zeta|.
\end{equation}
Setting $(u_0, v_0):=(\varphi^s(\zeta),\psi^s(\zeta))$, by definition we have
$$\frac{|v_0|}{a}=|z_{n_{k}}| - 4\theta_{k}.$$ Moreover, the forward orbit  $(u_{i}, v_{i})$ satisfies
$$
|v_i| =|\psi^s(\lambda_s^i\cdot \zeta)|< |v_0|, \quad \forall i\geq 1,
$$
and since $|u_i| = \frac{|v_{i+1}|}{a}$ it follows that
$$
|u_i| < |z_{n_{k}}| - 4\theta_{k}, \quad \forall i\geq 0.
$$

Similarly, we find a point $(s_0, t_0)$ in the unstable manifold $W^u_{F_{k}}(0,0)$ whose backwards orbit $(s_{-i}, t_{-i})$ satisfies $|s_{-i}| < |s_0|$ and for which $$|s_0| = |z_{n_{k}}| - 2\theta_{k}.$$
By taking an arbitrarily small perturbation of $(s_0, t_0)$, we can make sure that the discrete sequence $(s_i)$ avoids the value $\frac{v_0}{a}$.

Consider arbitrarily small disks $D_1$ centered at $(u_0, v_0)$ and $D_2$ centered at $(s_0, t_0)$ transverse to the stable respectively the unstable manifold.
It follows from the Lambda Lemma  \ref{lambdalemma} that there exists a finite $F_k$-orbit  $(Q_j):=(z_j',w_j')_{0\leq j\leq M}$ which intersects the ball $B(0,\frac{1}{k+1})$ with $Q_0\in D_1$ and $Q_{M}\in D_2$.
If all the perturbations are chosen small enough, then the sequence  $(z'_j)_{0\leq j\leq M}$ avoids the disk $\overline D(z_{n_k},\theta_{k})$ and the point  $\frac{w'_0}{a}$.
Setting $\theta_{k+1}>0$  small enough completes the proof.
\end{proof}

\subsection{Connecting the orbits via the contracting detour}
By continuity of $F_k$ there exist constants $0<\tilde \beta_j\leq \frac{\theta_{k+1}}{2}$  for all $0\leq j\leq M$ such that  $$F_k(B(Q_j,\tilde \beta_j))\subset \subset F_k(B(Q_{j+1},\tilde \beta_{j+1})),\quad 0\leq j<M.$$
To control the contraction we will need the following Lemma, a direct consequence of the formula of transcendental H\'enon maps.

\begin{lemma}\label{lip}
Let $f\in \mathcal{O}(\C)$ and let  $F(z,w):=(f(z)+aw,az)$.
Choose $a',a''$ such that $$0<a''<a<a'<1,$$ and let $D_1\subset \subset D_2\subset \C$ two open disks.
Then there exists $\alpha>0$ such that if we have
$\|f-s\|_{\overline D_2}\leq \alpha,$
for some constant $s\in \C$, then
$$ a'' \|(z,w)-(z',w')\|\leq \|F(z,w)-F(z',w')\|\leq a' \|(z,w)-(z',w')\|, \quad \forall z,z'\in \overline D_1.$$
\end{lemma}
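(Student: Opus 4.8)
The plan is to split $F$ into its linear part $(\z,\omega)\mapsto(a\omega,a\z)$ and the transcendental perturbation coming from $f$, and to control the latter by a Cauchy estimate. First, for any $(z,w),(z',w')$ with $z,z'\in\overline D_1$ I would write
$$
F(z,w)-F(z',w')=\bigl(a(w-w'),\,a(z-z')\bigr)+\bigl(f(z)-f(z'),\,0\bigr).
$$
The first summand is the image of $(z-z',w-w')$ under $(\z,\omega)\mapsto(a\omega,a\z)$; since the matrix $\left(\begin{smallmatrix}0&1\\1&0\end{smallmatrix}\right)$ is unitary, this map is a dilation by the exact factor $a$ on all of $\C^2$, so the first summand has norm precisely $a\,\|(z,w)-(z',w')\|$. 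Thus everything reduces to bounding $|f(z)-f(z')|$ in terms of $|z-z'|$.

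For that, set $r:=\mathrm{dist}(\overline D_1,\C\setminus D_2)>0$ (finite and positive, since $\overline D_1$ is a compact subset of the open set $D_2$). If $\|f-s\|_{\overline D_2}\le\alpha$ for some constant $s$, then $f-s$ is a holomorphic function bounded by $\alpha$ on $D_2$, and the Cauchy integral formula for the derivative over circles of radius arbitrarily close to $r$ yields $\|f'\|_{\overline D_1}=\|(f-s)'\|_{\overline D_1}\le\alpha/r$. Since $\overline D_1$ is convex, integrating $f'$ along the segment from $z'$ to $z$ gives
$$
|f(z)-f(z')|\le\frac{\alpha}{r}\,|z-z'|\le\frac{\alpha}{r}\,\|(z,w)-(z',w')\|.
$$
Combining this with the displayed decomposition and the triangle inequality gives
$$
\Bigl(a-\tfrac{\alpha}{r}\Bigr)\|(z,w)-(z',w')\|\le\|F(z,w)-F(z',w')\|\le\Bigl(a+\tfrac{\alpha}{r}\Bigr)\|(z,w)-(z',w')\|.
$$
Finally, since $a''<a<a'$, I would fix $\alpha>0$ with $\alpha<r\cdot\min\{a-a'',\,a'-a\}$; then $a-\alpha/r>a''$ and $a+\alpha/r<a'$, which is exactly the asserted two-sided estimate.

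There is no genuinely difficult step: the statement is an elementary Lipschitz estimate. The one point worth getting right — and the only place where the special form of a transcendental H\'enon map enters — is the observation that the linear part $(\z,\omega)\mapsto(a\omega,a\z)$ is an \emph{exact} homothety of $\C^2$ (eigenvalues $\pm a$, orthonormal eigenbasis), so that the contribution of the free variables $w,w'$ cancels perfectly and only the nonlinear term $f(z)-f(z')$, small by the hypothesis on $\alpha$ via Cauchy estimates, has to be absorbed into the gap between $a$ and $a',a''$.
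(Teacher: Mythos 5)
Your proof is correct. The paper does not include a proof of this lemma, describing it merely as ``a direct consequence of the formula of transcendental H\'enon maps,'' so there is nothing to compare against; your argument is the natural elementary verification. The decomposition $F(z,w)-F(z',w')=\bigl(a(w-w'),a(z-z')\bigr)+\bigl(f(z)-f(z'),0\bigr)$ is correct, the observation that the first summand has norm exactly $a\,\|(z,w)-(z',w')\|$ (swapping coordinates is an isometry) is the right way to make the free variables $w,w'$ harmless, and the Cauchy estimate $\|f'\|_{\overline D_1}\le\alpha/r$ followed by integration over the segment in the convex set $\overline D_1$ correctly bounds the perturbation by $(\alpha/r)\,|z-z'|\le(\alpha/r)\,\|(z,w)-(z',w')\|$. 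Choosing $\alpha<r\cdot\min\{a-a'',a'-a\}$ then closes both inequalities via the triangle inequality.
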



Fix $0<a''<a<a'<c<1$ and let $N>0$ be such that
\begin{equation}\label{fattorecontrazione}
c^N\cdot \beta_{n_k}< \tilde \beta_{0}.
\end{equation}

We now construct the contracting detour, starting at $P_{n_k}$ and ending at $Q_0$, obtaining a contraction by at least the factor $c^N$.
Choose a family of points $\{z''_j\}_{0\leq j\leq N}\subset \C$ such that
\begin{enumerate}
\item $z''_0=z_{n_k}, z''_{N}=\frac{w_0'}{a},$
\item $|z''_j|>|z_{n_k}|+2$ for all $1\leq j\leq N-1$,
\item $|z''_j-z''_i|>2$,  for all $0\leq i\neq j\leq N-1$.
\end{enumerate}

\medskip

Let $R_{k+1}>0$ be such that
\begin{equation}\label{almostdone}
R_{k+1}>|z''_j| +\theta_{k},\ \forall 0\leq j\leq N ,\quad \mbox {and}\quad R_{k+1}>|z'_j| +\theta_{k+1}, \ \forall 0\leq j\leq M.
\end{equation}

 Define $w''_0:=w_{n_k}$ and $w''_j:=az''_{j-1}$ for all $1\leq j\leq N$, and consider the points $\{T_j\}_{1\leq j\leq N}\subset \C^2$ defined as $$T_j:=(z''_j,w''_j).$$

By the choice of the points $(z''_j)$ and by Lemma \ref{lemma:claim2} it follows that the disks
$$
(\overline D(z''_j,\theta_{k}))_{0\leq j< N}, \quad  \overline D(w'_0/a,\theta_{k+1}),\quad \overline D(z'_{M},\theta_{k+1})
$$
are pairwise disjoint, and disjoint from the polynomially convex set $K$. Let $H$ denote the union of such disks.

 We  define a holomorphic function on the polynomially convex set $K\cup H$ in the following way:
 \begin{enumerate}
\item  $h$ coincides with $f_k$ on $K$,
\item $h|_{\overline D(z''_j,\theta_k)}$ is constantly equal to  $z''_{j+1}-aw''_j$ for all $0\leq j< N$,
\item $h|_{\overline D(w'_0/a,\theta_{k+1})}$ is constantly equal to $ z_0'-aw''_N,$
\item $h|_{\overline D(z'_{M},\theta_{k+1})}$ is constantly equal to some value $A>R_{k+1}+5\theta_{k+1}.$
 \end{enumerate}

 By the Runge approximation Lemma \ref{rungeappr} there exists a function $f_{k+1}\in \mathcal{O}(\C)$ such that
\begin{enumerate}
\item   $f_{k+1}(0)=h(0)=0, f_{k+1}'(0)=h'(0)=b,$
\item    $f_{k+1}(z_j)= h(z_j)$ for all $0\leq j< n_k$,
\item  $f_{k+1}(z'_j)= h(z'_j)$ for all $0\leq j\leq  M$,
\item $f_{k+1}(z''_j)= h(z''_j)$ for all $0\leq j\leq  N$,
\item $\|f_{k+1}-h\|_{K\cup H}<\epsilon_{k+1}.$
\end{enumerate}
where  $\epsilon_{k+1}\leq \frac{1}{2^{k+1}}$, and $\epsilon_{k+1}$ is also  smaller than the minimum of the constants  $\alpha$ given by  Lemma \ref{lip} for the following pairs of disks:
\begin{enumerate}
\item $D(z''_j,\frac{\theta_k}{2})\subset \subset D(z''_j,\theta_k) $ for all $0\leq j< N$,
\item $\overline D(w'_0/a,\frac{\theta_{k+1}}{2})\subset \subset \overline D(w'_0/a,\theta_{k+1})$,
\item  $\overline D(z'_{M},\frac{\theta_{k+1}}{2})\subset \subset \overline D(z'_{M},\theta_{k+1})$.
\end{enumerate}

Define $F_{k+1}(z,w):=(f_{k+1}(z)+aw,az).$
It is easy to see that the  sequences of points  $$(P_n)_{0\leq n\leq n_{k}}, (T_j)_{1\leq j\leq N},(Q_j)_{0\leq j\leq M}$$ form an $F_{k+1}$-orbit,
that is, the contracting detour $(T_j)_{1\leq j\leq N}$ connects the old and new pieces of orbit.

Set $$n'_k:=n_k+N+1,\quad n_{k+1}:=n'_k+M+1,$$ so that  $P_{n'_k}:=Q_0$, and  $P_{n_{k+1}}:=F_{k+1}(Q_M)$.
Define $$\beta_{n_k+\ell}:=\beta_{n_k}\cdot c^\ell, \quad \forall \,0\leq \ell\leq N, $$
$$\beta_{n'_k+\ell}:=\tilde \beta_\ell, \quad \forall\, 0\leq \ell\leq M.$$

It is easy to see that, up to taking a smaller $\epsilon_{k+1}$, the map $F_{k+1}$ satisfies property (iii).
Property (iv) follows from (\ref{almostdone}) and from $A>R_{k+1}+5\theta_{k+1}.$
Since by construction the new piece of orbit  $(Q_j)$ intersects the ball $B(0,\frac{1}{k+1})$, property (v) is satisfied.
Thus Proposition \ref{prop:claim1} is proved, completing the proof of the existence of an oscillating wandering domain.

\subsection{Complex structure}
We will now prove that, by making the contracting detours sufficiently long, the oscillating wandering domains can be guaranteed to be biholomorphically equivalent to $\mathbb C^2$. We denote the Fatou component containing $P_0$ by $\Omega$.

We have constructed an orbit $(P_n)$ and a sequence of radii $(\beta_n)$ such that
$$
F (B(P_n, \beta_n)) \subset\subset B(P_{n+1}, \beta_{n+1}).
$$
Define the  \emph{calibrated basin}
$$
\Omega_{(P_n), (\beta_n)} := \bigcup _{n\in \N}F^{-n} (B(P_n, \beta_n)),
$$
and notice that it is contained in $\Omega$.

\begin{lemma}
We can guarantee that the calibrated basin $\Omega_{(P_n), (\beta_n)} $ is biholomorphic to  $\mathbb C^2$.
\end{lemma}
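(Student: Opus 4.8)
The plan is to exhibit $\Omega_{(P_n),(\beta_n)}$ as an increasing union of balls and to build a biholomorphism onto $\C^2$ by a non-autonomous version of the classical Rosay--Rudin argument (the one showing that the basin of an attracting fixed point of an automorphism of $\C^2$ is biholomorphic to $\C^2$). Since $F$ is an automorphism and $F(B(P_n,\beta_n))\subset\subset B(P_{n+1},\beta_{n+1})$, the domains $U_n:=F^{-n}(B(P_n,\beta_n))$ increase with $n$ and have union $\Omega_{(P_n),(\beta_n)}$; each is biholomorphic to the unit ball $\B^2\subset\C^2$. Let $A_n\colon B(P_n,\beta_n)\to\B^2$ be the affine dilation $x\mapsto(x-P_n)/\beta_n$ and put $\Psi_n:=A_n\circ F^n\colon U_n\to\B^2$, a biholomorphism carrying the basepoint $F^{-n}(P_n)$ to $0$. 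Writing $(n_k)$ for the times at which the orbit has just finished its $k$-th contracting detour, the ``return maps'' $G_k:=A_{n_{k+1}}\circ F^{\,n_{k+1}-n_k}\circ A_{n_k}^{-1}\colon \B^2\to\B^2$ are injective holomorphic self-maps with relatively compact image, satisfy $G_k(0)=0$, and $\Psi_{n_{k+1}}=G_k\circ\Psi_{n_k}$ on $U_{n_k}$.

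Set $\ell_k:=d_0G_k$, $\Lambda_k:=\ell_{k-1}\circ\cdots\circ\ell_0$ (with $\Lambda_0=\mathrm{Id}$), and $\Phi_k:=\Lambda_k^{-1}\circ\Psi_{n_k}\colon U_{n_k}\to\C^2$. On $U_{n_k}$ one has $\Phi_{k+1}=\big(\Lambda_k^{-1}\circ(\ell_k^{-1}\circ G_k)\circ\Lambda_k\big)\circ\Phi_k$, where $\ell_k^{-1}\circ G_k-\mathrm{Id}$ vanishes to order $\ge2$ at $0$ and has sup norm over $\B^2$ at most $\norm{\ell_k^{-1}}\cdot\norm{G_k-\ell_k}_{\B^2}$. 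A Schwarz-type (Cauchy) estimate then bounds $\norm{\Phi_{k+1}-\Phi_k}$ on any fixed compact $K\subset\Omega_{(P_n),(\beta_n)}$ by a constant (depending on $K$) times $\norm{\Lambda_k^{-1}}\cdot\norm{\ell_k^{-1}}\cdot\norm{\Lambda_k}^2\cdot\norm{G_k-\ell_k}_{\B^2}$, once one checks (using that the transition maps are close to the identity on bounded sets) that the sets $\Phi_k(K)$ stay inside a fixed bounded set. Thus $(\Phi_k)$ converges uniformly on compact subsets of $\Omega_{(P_n),(\beta_n)}$ to a holomorphic map $\Phi$ as soon as the displayed quantity is summable in $k$.

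This is where the length of the detours is used. Along the $k$-th contracting detour $(T_1,\dots,T_{N})$ the first coordinate of $F$ stays, by construction, in disks on which $f_{k+1}$ is $\epsilon_{k+1}$-close to a constant, so Lemma \ref{lip} gives $a''\le\mathrm{Lip}\,F\le a'$ there; since $\beta_{n+1}=c\,\beta_n$ with $a'<c$, in the normalized charts each detour step contracts by a factor in $[a''/c,a'/c]\subset(0,1)$. Hence, writing $G_k=\mathcal O_k\circ\mathcal D_k$ with $\mathcal D_k$ the ($N+1$-step) detour part and $\mathcal O_k$ the oscillation part along $(Q_j)$, one gets $\norm{\ell_k}\le C_k(a'/c)^{N}$, $\norm{\ell_k^{-1}}\le C_k(c/a'')^{N}$ and $\norm{G_k-\ell_k}_{\B^2}\le C_k(a'/c)^{N}$, where $C_k$ depends only on the $k$-th oscillation and on $F$ near the origin, data which are fixed before the detour length $N$ is chosen. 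If one arranges at the outset that $0<a''<a<a'<c<1$ are chosen with $(a')^2<a''c$ --- possible since $a'$ and $a''$ may be taken as close to $a=\tfrac12$ as we like and $a<c$ --- then $\big((a')^2/(a''c)\big)^{N}\to0$, and choosing each detour length large enough (inductively, after the $k$-th oscillation and hence $C_k$ have been fixed, and also shrinking $\epsilon_{k+1}$) makes the above product at most $2^{-k}$, so that $(\Phi_k)$ converges. I expect this inductive calibration of the detour lengths to be the main obstacle: the expansion picked up along each oscillation is uncontrolled and $k$-dependent, and one must spend a long enough detour to dominate it both in the error series above and in the constants $\norm{\Lambda_k^{-1}}$.

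Finally, each $\Phi_k$ has the same nonzero constant Jacobian determinant (because $F$ has constant Jacobian and $A_n$, $\Lambda_k$ are affine) and $d_0\Phi_k$ is a fixed invertible linear map independent of $k$; hence $\Phi$ has constant nonzero Jacobian, so it is an open map, and by the Hurwitz-type argument of Lemma \ref{hurwitzlimit} it is injective. Its image $\Phi(\Omega_{(P_n),(\beta_n)})=\bigcup_k\Phi(U_{n_k})$ is open; since $\Phi(U_{n_k})$ is within a summable Hausdorff distance of $\Lambda_k^{-1}(\B^2)$, which contains the ball of radius $\norm{\Lambda_k}^{-1}\ge\prod_{j<k}(c/a')^{N_j}/C_j\to\infty$, the image exhausts $\C^2$. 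Therefore $\Phi$ is a biholomorphism from $\Omega_{(P_n),(\beta_n)}$ onto $\C^2$, which proves the lemma.
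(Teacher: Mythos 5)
Your approach follows the same route the paper takes: normalize by the affine maps $H_n$, observe that the transition maps contract by a factor in $[a''/c,\,a'/c]$ along the detours, and run a non-autonomous Rosay--Rudin/Wold-type argument, making the detours long enough to absorb the uncontrolled expansion along the oscillations. The paper simply points to \cite{Wold} and to Theorem~1.3 of \cite{PW} for this step, so you are filling in details that they leave implicit, and you correctly arrive at the relevant inequality $(a')^2<a''c$ (the paper's $a'^2<a''$ is the same condition stated a little loosely).

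However, with the estimates exactly as you state them the error series is not summable. The $k$-th term of your bound contains the factor
$$\norm{\ell_k^{-1}}\cdot\norm{G_k-\ell_k}_{\B^2}\ \le\ C_k^2\Bigl(\frac{c}{a''}\Bigr)^{N}\Bigl(\frac{a'}{c}\Bigr)^{N}\ =\ C_k^2\Bigl(\frac{a'}{a''}\Bigr)^{N},$$
and since $a'/a''>1$ this factor \emph{grows} as the length $N=N_k$ of the $k$-th detour increases; the decaying quantity $\bigl((a')^2/(a''c)\bigr)^{N_j}$ you invoke sits in $\norm{\Lambda_k^{-1}}\norm{\Lambda_k}^2$ and involves only the earlier indices $j<k$, which are already frozen when $N_k$ is chosen, so it cannot rescue the $k$-th term. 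The fix is the sharper bound $\norm{G_k-\ell_k}_{\B^2}\le C_k\,(a'/c)^{2N}$, which uses the order-two vanishing of $G_k-\ell_k$ at $0$ more fully: $\mathcal D_k$ maps $\B^2$ into a ball of radius $O\bigl((a'/c)^{N}\bigr)$, on which the quadratic tail of $\mathcal O_k$ contributes only $O\bigl((a'/c)^{2N}\bigr)$, while the deviation of $\mathcal D_k$ from its own linearization can also be made $O\bigl((a'/c)^{2N}\bigr)$ by shrinking $\epsilon_{k+1}$ --- precisely the extra knob you mention but do not actually exploit in the estimate. With this correction the $k$-th factor becomes $C_k^2\bigl((a')^2/(a''c)\bigr)^{N_k}$, which does tend to $0$ as $N_k\to\infty$, and the inductive calibration of the detour lengths you describe then closes the argument.
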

\begin{proof}
For all $n\geq 0$, let $H_n\in {\rm Aut}(\C^2)$ be defined by $H_n(z):=P_n+  \beta_n \cdot z$.
For all $m\geq n\geq 0$, define $$\tilde F_{m,n}:=    H_m^{-1}\circ F^{m-n}\circ H_n.$$
Then for all $n\geq 0$, we have that $\tilde F_{n+1,n}(\B^2)\subset \B^2$ and $\tilde F_{n+1,n}(0)=0.$
It is easy to see that the calibrated basin  $\Omega_{(P_n), (\beta_n)} $ is biholomorphic to the set
$\tilde \Omega:=\bigcup_{n\in \N}\tilde F_{n,0}^{-1}(\B^2).$

If $n\in \N$  belongs to a contracting detour, then
\begin{equation}\label{eq:contraction}
\frac{a''}{c}\|x\|\leq\|\tilde F_{n+1,n}(x)\|\leq \frac{a'}{c}\|x\|,\quad \forall x\in \B^2,
\end{equation}
where we can assume that $a'^2<a''$. If this was the case for every $n$ then it would follow immediately that the maps
$$
\Phi_n := (d_0\tilde{F}_{0,n})^{-1} \circ \tilde{F}_{0,n}
$$
converge to a biholomorphism from the calibrated basin to $\mathbb C^2$, see for example \cite{Wold}.

While we cannot guarantee that equation \eqref{eq:contraction} with $a'^2<a''$ holds outside of the contracting detours, by making the contracting detours sufficiently long we can still ensure that $\Phi_n$ converges uniformly on compact subsets to a biholomorphism $\Phi\colon \tilde \Omega\to \C^2$, compare for example Theorem 1.3 of \cite{PW}.

%
%
\end{proof}

We will now show, again by using the plurisubharmonic method, that the wandering Fatou component $\Omega$  can be forced to be equal to the calibrated basin.
Consider the sequence of plurisubharmonic functions $g_n$ on $\mathbb C^2$ given by
$$
g_n(z,w): = \frac{\log\|F^n(z,w) - P_n\|}{n}.
$$

\begin{lemma}
We can guarantee that the functions $g_n$ converge to $\log a$ on $\Omega_{(P_n), (\beta_n)}\setminus\{P_0\}$.
\end{lemma}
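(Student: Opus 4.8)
The plan is to track the displacement $d_n:=F^n(z,w)-P_n$ along the orbit and show that $\log\|d_n\|$ decreases essentially like $n\log a$, by splitting the orbit into the alternating contracting-detour blocks $[n_k,n_k']$ and new-oscillation blocks $[n_k',n_{k+1}]$ of Proposition~\ref{prop:claim1} and controlling each separately. Fix $(z,w)\in\Omega_{(P_n),(\beta_n)}\setminus\{P_0\}$ and let $n_0$ be minimal with $F^{n_0}(z,w)\in B(P_{n_0},\beta_{n_0})$; the nesting $F(B(P_n,\beta_n))\subset\subset B(P_{n+1},\beta_{n+1})$ gives $\|d_n\|<\beta_n$ for all $n\ge n_0$, so $d_n\to 0$, while $d_n\neq 0$ for every $n$ since $F$ is injective and $(z,w)\neq P_0$. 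The finitely many terms with $n<n_0$ change $\log\|d_n\|$ by only $O(1)$, negligible after dividing by $n$. From $F(P_n)=P_{n+1}$ and $F(z,w)=(f(z)+aw,az)$ one gets $d_{n+1}=d_{P_n}F\cdot d_n+E_n$ with $d_{P_n}F=\left(\begin{smallmatrix}f'(z_n)&a\\ a&0\end{smallmatrix}\right)$, $z_n:=\pi_1(P_n)$, and $\|E_n\|\le C_n\|d_n\|^2$, $C_n$ a bound for $f''$ near $P_n$. Choosing the radii $\beta_n$ sufficiently small — legitimate, since the relevant quantities $C_n$ and $\|d_{P_n}F\|$ for the $k$-th block are controlled by $F_k$ and $R_k$, which are fixed before those $\beta$'s are introduced — the errors $E_n$ alter $\log\|d_n\|$ by only $O(1)$ in total; it thus suffices to estimate $\sum_{j=n_0}^{n-1}\log\!\frac{\|d_{P_j}F\cdot d_j\|}{\|d_j\|}$, each summand lying between $\log m(d_{P_j}F)$ and $\log\|d_{P_j}F\|$, $m(\cdot)$ being the least singular value.

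On the $k$-th contracting detour $f$ is within a controlled error of a constant on each disk $D(z''_j,\theta_k)$ and $D(w'_0/a,\theta_{k+1})$, while $\pi_1(F^j(z,w))$ stays within $\beta_j\le\theta_k/2$ of the center of the relevant disk. Since the linear part $\left(\begin{smallmatrix}0&a\\ a&0\end{smallmatrix}\right)$ is conformal of norm $a$, a direct estimate sharpening Lemma~\ref{lip} shows that $F$ there contracts Euclidean distance by a factor $a\bigl(1+O(\epsilon_{k+1}/\theta_k)\bigr)$, so both $\log m(d_{P_j}F)$ and $\log\|d_{P_j}F\|$ equal $\log a+O(\epsilon_{k+1}/\theta_k)$ for $j$ in that detour. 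On the $k$-th oscillation each summand has absolute value at most $\Lambda_k$, where $\Lambda_k$ bounds $\log\|d_xF\|$, $\log\|(d_xF)^{-1}\|$, and — using $\det d_xF=-a^2$ — each such summand over a neighborhood of the oscillation; since the oscillation is an $F_k$-orbit inside a disk of radius comparable to $R_k$ and $f$ differs from $f_k$ there by a controlled amount, $\Lambda_k$ depends only on $f_k$ and $R_k$ and is already determined before $N_k$ is chosen.

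We impose two further requirements in the construction of Proposition~\ref{prop:claim1}, both compatible with its forward-inductive choice of parameters: (a) $\epsilon_{k+1}$ (selected after $N_k$ and $\theta_k$) is so small that $\sum_k(N_k+1)\,\epsilon_{k+1}/\theta_k<\infty$; (b) the detour length $N_k$ (selected with $M_k$, $\Lambda_k$ and all earlier data in hand) is so large that $\sum_{j\le k}(M_j+1)(1+\Lambda_j)=o\!\bigl(\sum_{j\le k}(N_j+1)\bigr)$. Write $D(n)$ for the number of detour steps $\le n$. By (a) the total detour contribution to $\log\|d_n\|$ is $D(n)\log a+O(1)$, while the total oscillation contribution is $\le\sum_{j\le k}(M_j+1)\Lambda_j$ in absolute value when $n$ lies in the $k$-th block; hence
\[
\log\|d_n\|=D(n)\log a+O(1)+\theta,\qquad |\theta|\le\textstyle\sum_{j\le k}(M_j+1)\Lambda_j .
\]
Dividing by $n$ and using $n\ge n_k\ge\sum_{j<k}(N_j+1)$ together with (b) yields $D(n)/n\to 1$ and $\theta/n\to 0$, so $g_n=\log a+o(1)$. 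This proves $g_n\to\log a$ pointwise on $\Omega_{(P_n),(\beta_n)}\setminus\{P_0\}$ (recall $a\in(0,1)$).

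The difficulties are organizational rather than conceptual. One must verify that the sharpened Lemma~\ref{lip} really gives contraction by $a\,(1+o(1))$ rather than merely by a factor in the fixed interval $[a'',a']$ — here the conformality of $\left(\begin{smallmatrix}0&a\\ a&0\end{smallmatrix}\right)$ and the freedom to shrink $\epsilon_{k+1}$ after $N_k,\theta_k$ are fixed are essential — and that all the additional smallness of $\epsilon_k,\theta_k,\beta_n$ and largeness of $N_k$ can be imposed simultaneously, which holds because Proposition~\ref{prop:claim1} is proved by a forward induction in which these parameters are still free at the stage where they are constrained here.
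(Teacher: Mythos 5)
Your proof is correct and follows exactly the strategy the paper intends: its entire proof is the one sentence ``Take sufficiently many contractions that are sufficiently close to multiplication by $a$,'' which is precisely the pair of requirements (a) and (b) you impose and then verify by splitting $\log\|F^n(z,w)-P_n\|$ into detour and oscillation contributions. Your write-up supplies the bookkeeping the paper leaves implicit, and it is sound.
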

\begin{proof}
Take sufficiently many contractions that are sufficiently close to multiplication by $a$.
\end{proof}

Recall from induction hypothesis (v) in Proposition \ref{prop:claim1} that for every $k$ there exists an integer $j = j_k \in (n_{k-1}, n_k)$ with $\|P_j\| < \frac{1}{k}$. In particular the subsequence $(P_{j_k})$ is bounded. Since every limit function of $(F^n)$ on the Fatou component $\Omega$ is constant, it follows that for all $(z,w) \in \Omega$ we have
$$
\|F^{j_k}(z,w)-P_{j_k}\|\to 0
$$

As a consequence, we have that $(g_{j_k})$ is locally uniformly bounded from above and that for all $(z,w)\in \Omega$,
$$\limsup_{k\to\infty}g_{j_k}(z,w)\leq 0.$$

\begin{lemma}
We can guarantee that $g_{j_k} \rightarrow 0$ on $\Omega \setminus \Omega_{(P_n), (\beta_n)}$, uniformly on compact subsets.
\end{lemma}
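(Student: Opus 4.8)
First I would upgrade the inequality $\limsup_k g_{j_k}\le 0$ to a locally uniform statement: as recalled just before the lemma, $\|F^{j_k}(z,w)-P_{j_k}\|\to 0$ uniformly on compact subsets of $\Omega$ (every limit function of $(F^n)$ on $\Omega$ is constant, and $P_{j_k}\to 0$), so on any fixed compact $K\subset\Omega$ one has $g_{j_k}\le 0$ for all large $k$. Hence the whole content of the lemma is a matching lower bound on compact subsets of $\Omega\setminus\Omega_{(P_n),(\beta_n)}$, uniform in the point.

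The engine of the proof is the trivial remark that a point $q$ already lies in the calibrated basin $\Omega_{(P_n),(\beta_n)}$ if $\|F^{j_k}(q)-P_{j_k}\|<\beta_{j_k}$ for a \emph{single} value of $k$, since then $q\in F^{-j_k}(B(P_{j_k},\beta_{j_k}))$. So it suffices to keep the calibration radius $\beta_{j_k}$ subexponentially small in the index $j_k$, i.e.\ to arrange $\tfrac{1}{j_k}\log\beta_{j_k}\to 0$. This is a free additional requirement in the inductive construction of Proposition \ref{prop:claim1}: in the step $k-1\to k$ the new oscillation, the constant $\theta_k$, and the radii $\tilde\beta_0,\dots,\tilde\beta_M\in(0,\theta_k/2]$ are all fixed \emph{before} the length $N$ of the contracting detour is chosen, while $\beta_{j_k}$ equals one of the $\tilde\beta_i$ and $j_k\ge n'_{k-1}=n_{k-1}+N+1>N$. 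Thus $\mu_k:=\min_i\tilde\beta_i>0$ and the two-sided bound $\mu_k\le\beta_{j_k}\le\theta_k/2<1$ are known at the moment $N$ is picked, and adding to the (already imposed) requirement that $N$ be large the condition $N>k\,|\log\mu_k|$ forces $-\tfrac{1}{k}<\tfrac{1}{j_k}\log\beta_{j_k}\le 0$ for every $k$; in particular, for each $\epsilon_0>0$ there is $k_0$ with $\beta_{j_k}>e^{-\epsilon_0 j_k}$ for all $k\ge k_0$.

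Granting this, the lemma follows at once. Fix $\epsilon_0>0$ and the corresponding $k_0$. For $k\ge k_0$ and any $q\in\Omega\setminus\Omega_{(P_n),(\beta_n)}$ one must have $\|F^{j_k}(q)-P_{j_k}\|\ge\beta_{j_k}>e^{-\epsilon_0 j_k}$ --- otherwise the remark above would put $q$ in the calibrated basin --- so $g_{j_k}(q)>-\epsilon_0$. Since $\epsilon_0$ was arbitrary, $g_{j_k}>-\epsilon_0$ on all of $\Omega\setminus\Omega_{(P_n),(\beta_n)}$ for $k$ large, giving $\liminf_k g_{j_k}\ge 0$ there, uniformly; combined with $g_{j_k}\le 0$ on compact subsets of $\Omega$ for $k$ large (first paragraph), this is exactly $g_{j_k}\to 0$ uniformly on compact subsets of $\Omega\setminus\Omega_{(P_n),(\beta_n)}$.

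The only delicate point is the middle step, namely ensuring $\beta_{j_k}=e^{-o(j_k)}$; but this is genuinely free, because $\beta_{j_k}$ is bounded below by a quantity fixed before the arbitrarily long contracting detour preceding time $j_k$, so one only has to record this extra lower bound on $N$ in the induction alongside the conditions already used (such as $c^N\beta_{n_k}<\tilde\beta_0$ and the requirement that the detours be sufficiently long).
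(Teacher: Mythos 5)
Your proof is correct and follows essentially the same strategy as the paper: observe that a point outside the calibrated basin must satisfy $\|F^{j_k}(q)-P_{j_k}\|\ge\beta_{j_k}$, note that $\beta_{j_k}$ is fixed (and bounded below) before the length $N$ of the contracting detour is chosen while $j_k>N$, and conclude by taking $N$ large that $\tfrac{1}{j_k}\log\beta_{j_k}\to 0$. You simply make explicit (via the quantitative requirement $N>k|\log\mu_k|$) the subexponential-decay condition that the paper leaves implicit in the phrase ``by making the contracting detours sufficiently long.''
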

\begin{proof}
Recall that in the construction of the wandering Fatou component the radius $\theta_{j_k}$ is determined before the length of the contracting detour. By making the contracting detour sufficiently long, we can guarantee that $j_k$ is as large as we want. For points $(z_0,w_0) \in \Omega \setminus \Omega_{(P_n), (\theta_n)}$ we have that
$$
\|(z_{j_k},w_{j_k}) - P_{j_k}\| \ge \theta_{j_k}.
$$
Thus, by making the contracting detours sufficiently long, we can guarantee that $g_{j_k}> -\epsilon_k$ on $\Omega \setminus \Omega_{(P_n), (\theta_n)}$ for any $\epsilon_k \searrow 0$. The conclusion follows.
\end{proof}

\begin{prop}
With the previous choices, the wandering Fatou component $\Omega$ equals the calibrated basin $\Omega_{(P_n), (\theta_n)}$, and is thus biholomorphically equivalent to $\mathbb C^2$.
\end{prop}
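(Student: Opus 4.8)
The plan is to run the plurisubharmonic method of the proof of Proposition~\ref{absorbing}, now with the functions $g_n$ in place of the $u_n$. Assume for contradiction that the calibrated basin $\Omega_{(P_n),(\beta_n)}$ is a proper subset of $\Omega$. Set $v := \limsup_{k\to\infty} g_{j_k}$ and let $v^\star$ be its upper semicontinuous regularization. The three preceding lemmas supply exactly the ingredients needed: the sequence $(g_{j_k})$ is locally uniformly bounded from above on $\Omega$, so by \cite[Prop 2.9.17]{Klimek} the function $v^\star$ is plurisubharmonic on $\Omega$, and it is not identically $-\infty$ since $v\equiv\log a$ on the nonempty open set $\Omega_{(P_n),(\beta_n)}\setminus\{P_0\}$; moreover $v^\star\le\log a<0$ on $\Omega_{(P_n),(\beta_n)}$, one has $v^\star\le 0$ on all of $\Omega$ (because $\limsup_k g_{j_k}\le 0$ there), and $v=0$ at every point of $\Omega\setminus\Omega_{(P_n),(\beta_n)}$ (because $g_{j_k}\to 0$ there).

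Next I would pick a boundary point. Since $\Omega$ is connected and $\Omega_{(P_n),(\beta_n)}$ is a nonempty proper open subset, there exists $\zeta\in\partial\Omega_{(P_n),(\beta_n)}\cap\Omega$. This $\zeta$ lies in the set $\Omega\setminus\Omega_{(P_n),(\beta_n)}$, hence $g_{j_k}(\zeta)\to 0$, so $v^\star(\zeta)\ge v(\zeta)=0$, and combined with $v^\star\le 0$ this forces $v^\star(\zeta)=0$. On the other hand, every small ball $B(\zeta,r)\subset\subset\Omega$ meets $\Omega_{(P_n),(\beta_n)}$ in a nonempty open set of positive Lebesgue measure, on which $v^\star\le\log a<0$, while $v^\star\le 0$ on the rest of the ball; hence the average of $v^\star$ over $B(\zeta,r)$ is strictly negative. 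This contradicts the sub-mean value property of the plurisubharmonic function $v^\star$ at $\zeta$. Therefore $\Omega=\Omega_{(P_n),(\beta_n)}$, and by the previous lemma this set is biholomorphic to $\C^2$.

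I do not expect a serious obstacle here: the construction of Proposition~\ref{prop:claim1} together with the three lemmas on the behaviour of $g_{j_k}$ has already done the real work, and the present statement is essentially a repetition of the plurisubharmonic argument of Section~\ref{sectionbaker}. The only points that require a line of care are that $v^\star$ is genuinely plurisubharmonic (which uses the local upper bound) and not identically $-\infty$, that the single exceptional point $P_0$, where $v=-\infty$, is irrelevant after regularization since $v$ is constant equal to $\log a$ near it, and that $\partial\Omega_{(P_n),(\beta_n)}\cap\Omega\neq\varnothing$; all of these are immediate.
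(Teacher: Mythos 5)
Your proposal is correct and follows essentially the same plurisubharmonic argument as the paper (which presents it in a very compressed form). You have simply spelled out the details that the paper leaves implicit: the existence of a boundary point $\zeta\in\partial\Omega_{(P_n),(\beta_n)}\cap\Omega$, the value $v^\star(\zeta)=0$, the strictly negative average over a small ball, and the treatment of the exceptional point $P_0$ via regularization — and you have also (correctly) replaced the paper's apparent typos $g_{n_k}$ and $(\theta_n)$ with $g_{j_k}$ and $(\beta_n)$.
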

\begin{proof}
Suppose by contradiction that $\Omega\neq \Omega_{(P_n), (\theta_n)}$.
Let $h$ be the upper-semicontinuous regularization of $\lim_{k\to\infty} g_{n_k}.$ Clearly $h\equiv \log a$ on $\Omega_{(P_n), (\theta_n)}$ and  $h\equiv 0$ on $\Omega\setminus \Omega_{(P_n), (\theta_n)}$. Then by [Klimek, Prop 2.9.17] the function $h$ is plurisubharmonic, and  the submean value property  at any  $\zeta\in \partial  \Omega_{(P_n), (\theta_n)}$ gives a contradiction.
\end{proof}

\end{document}